\pgfplotsset{compat=1.18}
\theoremstyle{plain}
\newtheorem{theorem}{Theorem}[section]
\newtheorem{lemma}[theorem]{Lemma}
\newtheorem{proposition}[theorem]{Proposition}
\theoremstyle{remark}
\newtheorem{remark}{Remark}
\newtheorem{definition}{Definition}
\def\R{\mathbb{R}}
\def\Z{\mathbb{Z}}
\DeclareMathOperator{\Var}{Var}
\DeclareMathOperator{\Cov}{Cov}
\DeclareMathOperator{\Uniform}{Uniform}
\DeclareMathOperator{\Rademacher}{Rademacher}
\newcommand{\indep}{\perp \!\!\! \perp}
\title{Optimal heteroskedasticity testing in nonparametric regression}
\author{Subhodh Kotekal\(^1\) and Soumyabrata Kundu\(^2\) \\ \textit{University of Chicago}}
\date{}
\begin{document}
\maketitle

\footnotetext[1]{Email: \texttt{skotekal@uchicago.edu}. The research of SK is supported in part by NSF
Grant ECCS-2216912. }
	\footnotetext[2]{Email: \texttt{soumyabratakundu@uchicago.edu}.}

\begin{abstract}
	Heteroskedasticity testing in nonparametric regression is a classic statistical problem with important practical applications, yet fundamental limits are unknown. Adopting a minimax perspective, this article considers the testing problem in the context of an \(\alpha\)-H\"{o}lder mean and a \(\beta\)-H\"{o}lder variance function. For \(\alpha > 0\) and \(\beta \in (0, 1/2)\), the sharp minimax separation rate \(n^{-4\alpha} + n^{-4\beta/(4\beta+1)} + n^{-2\beta}\) is established. To achieve the minimax separation rate, a kernel-based statistic using first-order squared differences is developed. Notably, the statistic estimates a proxy rather than a natural quadratic functional (the squared distance between the variance function and its best \(L^2\) approximation by a constant) suggested in previous work.

    The setting where no smoothness is assumed on the variance function is also studied; the variance profile across the design points can be arbitrary. Despite the lack of structure, consistent testing turns out to still be possible by using the Gaussian character of the noise, and the minimax rate is shown to be \(n^{-4\alpha} + n^{-1/2}\). Exploiting noise information happens to be a fundamental necessity, as consistent testing is impossible if nothing more than zero mean and unit variance is known about the noise distribution. Furthermore, in the setting where the variance function is \(\beta\)-H\"{o}lder but heteroskedasticity is measured only with respect to the design points, the minimax separation rate is shown to be \(n^{-4\alpha} + n^{-\left((1/2) \vee (4\beta/(4\beta+1))\right)}\) when the noise is Gaussian and \(n^{-4\alpha} + n^{-4\beta/(4\beta+1)} + n^{-2\beta}\) when the noise distribution is unknown.
\end{abstract}

\section{Introduction}\label{section:intro}    
Consider the heteroskedastic nonparametric regression model
\begin{equation}\label{model}
    Y_i = f(x_i) + V^{1/2}(x_i) \xi_i
\end{equation}
for \(i=0,...,n\) where \(x_i = \frac{i}{n}\) are fixed design points in the unit interval and the noise variables \(\xi_i\) are independent and identically distributed with zero mean, unit variance, and finite fourth moment. The functions \(f\) and \(V\) are unknown and denote the mean and variance respectively. 

Nonparametric regression is, by now, a classical topic in statistics. Much of the existing work and theoretical results concern the homoskedastic setting where the variance function \(V\) is constant. Though the heteroskedastic case is more practically relevant, few rigorous results are available in the literature. It is well known that optimal estimation requires accounting for the heteroskedasticity of the noise and simpler estimators employed in the homoskedastic case should not be used as the statistical cost can be quite large. As has been widely acknowledged, the problem of detecting heteroskedasticity is an important one.

Heteroskedasticity testing has a long history in statistics. For example, it is relevant in the basic statistical problem of testing the equality of means of two groups. Specifically, it is preferable to use a pooled \(t\)-test if the variances were equal, and similarly when testing equality of means of multiple groups in ANOVA. In other contexts, the equality of the group variances may itself be the scientific question of interest. Levene's test \cite{levene_robust_1960} is a celebrated method used to test for heteroskedasticity in this setting. The Brown-Forsythe test \cite{brown_robust_1974} was developed as an improvement to Levene's test for robustness to nonnormality. We point the interested reader to \cite{gastwirth_impact_2009} for a detailed discussion of the history of heteroskedasticity testing in the context of testing equality of group means (and other considerations in ANOVA). 

Beyond ANOVA, heteroskedasticity testing has seen much development in the older statistical literature, most notably in the context of linear regression. The problem has attracted wide interest from a variety of communities, including statisticians, biostatisticians, and econometricians. Though the usual ordinary least squares (OLS) estimator is still consistent in the heteroskedastic setting, it is no longer efficient and the usual reported confidence intervals require modification; thus, it is important to test for heteroskedasticity. From this literature, the Breusch-Pagan test \cite{breusch_simple_1979} has emerged as a popular choice for the practitioner. 

The Breusch-Pagan test assumes normality of the error and assumes the variance for the \(i\)th observation can be written as \(\sigma^2_i = h(\langle Z_i, \alpha\rangle)\) where \(h\) is a known twice-differentiable function, \(\alpha \in \R^d\) is an unknown coefficient vector, and \(Z_i \in \R^d\) are auxiliary covariates (with the first coordinate equal to \(1\) to incorporate an intercept) satisfying some regularity conditions. In this setup, the null hypothesis of homoskedasticity is equivalent to the hypothesis \(\alpha_2 = ... = \alpha_d = 0\). The test statistic is obtained by taking the squared residuals from OLS, regressing them on \(\{Z_i\}_{i=1}^{n}\), and computing the sum of the resulting squared residuals. Breusch and Pagan \cite{breusch_simple_1979} show the statistic (after normalization) is asymptotically distributed according to the \(\chi^2_{d-1}\) distribution under the null hypothesis. Though the conceptual elegance and computational simplicity of the test is attractive, it suffers from some serious drawbacks. In particular, \cite{breusch_simple_1979} assumes a particular specification of the heteroskedasticity. Furthermore, normality of the error term and some regularity conditions on \(\{Z_i\}_{i=1}^{n}\) are assumed. One cannot broadly consider situations where the variances generically depend on the main covariates \(\{X_i\}_{i=1}^{n}\). These drawbacks are shared in various degrees by most procedures suggested in this early literature concerning linear regression. For example, the widely used Goldfeld-Quandt test \cite{goldfeld_tests_1965} is only suited to testing the presence of a specific form of heteroskedasticity, namely that which is inflated only by a pre-specified covariate. Though the original proposal requires normality of the error term, Goldfeld and Quandt \cite{goldfeld_tests_1965} also provide a test to handle nonnormal error distributions. However, more general heteroskedasticity behavior is not addressed.

The celebrated work of White \cite{white_heteroskedasticity-consistent_1980} proposes, in addition to a renowned heteroskedasticity correction for standard errors, a test for heteroskedasticity. Notably, no particular form of the heteroskedasticity is assumed. However, as White notes, the test he proposes requires that the linear model is correctly specified. In other words, if the statistician, who is agnostic about model specification, rejects the null hypothesis, then it can only be concluded that either there is heteroskedasticity present or the model is misspecified. If, however, the statistician is willing to assume the correctness of the linear model, then the presence of heteroskedasticity can be reasonably concluded. Thus, the impressive results of White \cite{white_heteroskedasticity-consistent_1980} are tightly tied to the linear model in the context of heteroskedasticity testing. 

Since this early work, the literature has matured beyond the linear model. In a broad regression setting, an old and intuitive idea is to examine the residuals after fitting an estimator developed for homoskedastic models. The early literature considered special cases for the mean function and the variance function (e.g. parametric family or tied to \(f\) by a link function) \cite{bickel_using_1978, white_heteroskedasticity-consistent_1980,harrison_test_1979,koenker_robust_1982}. When parametric forms are imposed, score-based tests are also natural \cite{cook_diagnostics_1983,breusch_simple_1979}. In the fully nonparametric case, a residual-based diagnostic was first proposed by Eubank and Thomas \cite{eubank_detecting_1993}, though their statistic requires choosing weights based on possible forms for the unknown variance function. After estimating a putative variance from residuals as if homoskedasticity was in force, Zheng \cite{zheng_testing_2009} constructed a test statistic by applying kernel-smoothing to the difference between the squared residuals and the estimated putative variance. A different method based on an empirical process from residuals has also been proposed \cite{chown_detecting_2018}. 

An alternative to residual-based methods are those based on difference sequences (or pseudoresiduals) \cite{gasser_residual_1986,hall_asymptotically_1990}. Such methods use differences between neighboring responses to examine the variance function, relying on the smoothness of \(f\) to assert the corresponding mean differences are small and do not pose too big a nuisance. Employing these differences, empirical process methods have been developed in special cases \cite{dette_new_2007,dette_simple_2009}. In the fully nonparametric case, the most developed residual-based results are due to \cite{dette_testing_1998,dette_consistent_2002}. 

To the best of our knowledge, fundamental limits for heteroskedasticity detection in the fully nonparametric model (\ref{model}) have not been established in the literature. We adopt a minimax testing framework \cite{ingster_nonparametric_2003,baraud_non-asymptotic_2002} and consider the detection problem in the setting where \(f\) is an \(\alpha\)-H\"{o}lder function. Specifically, define the H\"{o}lder class of functions for \(\alpha, M > 0\)
\begin{align}
    \mathcal{H}_\alpha(M) = &\left\{ g : [0, 1] \to \R : \left|g^{(\lfloor \alpha \rfloor)}(x) - g^{(\lfloor \alpha \rfloor)}(y)\right| \leq M|x-y|^{\alpha - \lfloor \alpha \rfloor} \text{ for all } x, y \in [0, 1] \right. \nonumber \\
    &\left. \;\;\;\;\;\text{and } ||g^{(k)}||_\infty \leq M \text{ for all } k \in \{0,...,\lfloor \alpha \rfloor\} \right\}. \label{space:Holder} 
\end{align}
Throughout, we will assume \(M\) is a sufficiently large constant and will frequently write \(\mathcal{H}_\alpha\) for notational ease. 

As noted in our earlier discussion and evidenced by the literature (e.g. \cite{white_heteroskedasticity-consistent_1980}), it is desirable to be completely flexible and not assume any particular form for \(V\). In Section \ref{section:profile}, we formulate and investigate the problem of heteroskedasticity testing where the variance profile may be completely arbitrary. To the best of our knowledge, no article in the literature accommodates an arbitrary variance profile for heteroskedasticity testing in nonparametric regression. Previous work (e.g. \cite{dette_testing_1998,dette_consistent_2002}) all require smoothness assumptions on \(V\). In fact, the proposed tests \cite{dette_testing_1998,dette_consistent_2002} are only shown to work under the assumption \(V\) has high enough smoothness above some level. We aim to fill this gap in the literature. 

Of course, it is likely heteroskedasticity testing becomes fundamentally easier when smoothness assumptions are imposed on \(V\). Consequently, it is also desirable to pin down statistical limits in this structured, but still nonparametric, setting and provide a test which exploits the available smoothness. For presentation purposes, we will first address the heteroskedasticity testing with smooth \(V\) before moving onto an arbitrary variance profile in Section \ref{section:profile}.

Following \cite{dette_testing_1998,dette_consistent_2002}, the case where \(V\) is a \(\beta\)-H\"{o}lder function will be considered. To formulate a minimax theory for heteroskedasticity testing, define the parameter spaces 
\begin{align}
    \mathcal{V}_0 &= \left\{V : [0, 1] \to [0, \infty) : V \equiv \sigma^2 \text{ for some } 0 \leq \sigma^2 \leq M\right\}, \label{space:V0} \\
    \mathcal{V}_{1, \beta}(\varepsilon) &= \left\{V \in \mathcal{H}_\beta : V \geq 0 \text{ and } ||V - \bar{V}\mathbf{1}||_2 \geq \varepsilon\right\} \label{space:V1} 
\end{align}
for \(\beta, \varepsilon > 0\). Here, \(\bar{V} = \int_{0}^{1} V(x) \,dx\) and \(\mathbf{1}\) is the constant function on the unit interval with value equal to one. The quantity \(||V - \bar{V}\mathbf{1}||_2\) is used to measure the deviation of \(V\) from homoskedasticity following \cite{dette_testing_1998}, and so \(\varepsilon\) determines the separation between the two spaces (\ref{space:V0}) and (\ref{space:V1}). Formally, the problem of testing for heteroskedasticity is 
\begin{align}
    H_0 &: V \in \mathcal{V}_0 \text{ and } f \in \mathcal{H}_\alpha, \label{problem:var0}\\
    H_1 &: V \in \mathcal{V}_{1, \beta}(\varepsilon) \text{ and } f \in \mathcal{H}_\alpha. \label{problem:var1}
\end{align}
The goal, following the minimax testing literature \cite{ingster_nonparametric_2003,baraud_non-asymptotic_2002}, is to characterize the minimax separation rate, that is, the necessary and sufficient separation between (\ref{space:V0}) and (\ref{space:V1}) such that successful detection is possible. To define the minimax separation rate, first define the testing risk 
\begin{equation}\label{def:testing_risk}
    \mathcal{R}(\varepsilon) = \inf_{\varphi}\left\{ \sup_{\substack{f \in \mathcal{H}_\alpha, \\ V \in \mathcal{V}_0}} P_{f,V}\left\{ \varphi = 1\right\} + \sup_{\substack{f \in \mathcal{H}_\alpha, \\ V \in \mathcal{V}_{1, \beta}(\varepsilon)}} P_{f,V}\left\{ \varphi = 0 \right\} \right\}
\end{equation}
where the infimum runs over all tests \(\varphi\) (i.e. measurable, binary-valued functions with the data \(\{Y_i\}_{i=0}^{n}\) as input). With the testing risk in hand, the minimax separation rate can be defined. 

\begin{definition}
    Fix \(\alpha, \beta > 0\). We say \(\varepsilon^* = \varepsilon^*(\alpha, \beta)\) is the minimax separation rate for (\ref{problem:var0})-(\ref{problem:var1}) if for all \(\eta \in (0, 1)\),
    \begin{enumerate}[noitemsep, label=(\roman*)]
        \item there exists \(C_\eta > 0\) not depending on \(n\) such that for all \(C > C_\eta\) we have \(\mathcal{R}(C\varepsilon^*) \leq \eta\),
        \item there exists \(c_\eta > 0\) not depending on \(n\) such that for all \(0 < c < c_\eta\) we have \(\mathcal{R}(c\varepsilon^*) \geq 1-\eta\),
    \end{enumerate}
    where \(\mathcal{R}\) is given by (\ref{def:testing_risk}). 
\end{definition}
The minimax separation rate \(\varepsilon^*(\alpha, \beta)\) characterizes the fundamental limit of the heteroskedasticity detection problem (\ref{problem:var0})-(\ref{problem:var1}). Importantly, a composite set of alternatives is considered rather than a shrinking, local alternative in a fixed direction considered by earlier work \cite{dette_consistent_2002}. In our view, the minimax perspective is more meaningful given the nonparametric setting. For the entirety of what follows, including the proofs, we will assume $\alpha$ and $\beta$ are fixed, and any constants introduced implicitly or explicitly can potentially depend on them.

\subsection{Related work}\label{section:related_work}

Some existing works have studied heteroskedasticity detection in (\ref{model}) under the H\"{o}lder setting and are worthy of further discussion. As noted earlier, the most developed results in the fully nonparametric case are due to Dette and Munk \cite{dette_testing_1998} and later to Dette \cite{dette_consistent_2002}. The earlier article by Dette and Munk \cite{dette_testing_1998} consider the case \(\alpha, \beta > \frac{1}{2}\). Taking the first-order differences \(R_i = Y_{i+1} - Y_i\) for \(i=0,...,n-1\), they define the test statistic 
\begin{equation*}
    \hat{T} = \frac{1}{4(n-2)}\sum_{i=0}^{n-3}R_i^2R_{i+2}^2 - \left(\frac{1}{2n} \sum_{i=0}^{n-1} R_i^2 \right)^2
\end{equation*}
and establish the convergence in distribution as \(n \to \infty\),
\begin{equation*}
    4\sqrt{n}\left(\hat{T} - ||V-\bar{V}\mathbf{1}||_2^2\right) \overset{d}{\longrightarrow} N(0, \tau^2_V),
\end{equation*}
for some asymptotic variance \(\tau^2_V\) for which they obtain an explicit formula. Dette and Munk \cite{dette_testing_1998} also provide an estimator for \(\tau^2_V\) and thus furnish a consistent test for heteroskedasticity. Though their test is appealing in the statistic's simplicity, a number of questions remain unresolved. First, their test is shown only to work under smoothness assumptions on \(V\), and even so nothing is known when either \(\alpha\) or \(\beta\) is below \(\frac{1}{2}\). Second, there is no guarantee the test is optimal. Finally, the convergence rate does not depend on either H\"{o}lder exponent. Heteroskedasticity testing is expected to become easier as \(V\) becomes smoother. Likewise, the effect of the nuisance mean function is not reflected in their result; for example, the problem ought to be easier if \(f\) is known to be constant than if \(f\) were quite rough. The result of Dette and Munk \cite{dette_testing_1998} only guarantees that alternatives converging to the null at \(n^{-1/2}\) rate (in squared norm) can be detected. 

Motivated by these drawbacks, Dette constructed, in the later article \cite{dette_consistent_2002}, a kernel-based test statistic (which is a modified version of that proposed by Zheng \cite{zheng_consistent_1996})
\begin{equation*}
    \hat{T} = \frac{1}{4n(n-1)h} \sum_{|i-j| \geq 2} K\left(\frac{x_i - x_j}{h}\right) \left(R_{i}^2 - \overline{R^2}\right)\left(R_j^2 - \overline{R^2}\right)
\end{equation*}
where \(K\) is a kernel satisfying the typical assumptions placed for density estimation and \(\overline{R^2} = \frac{1}{n} \sum_{i=0}^{n-1} R_i^2\). Under the assumption \(\alpha, \beta \geq \frac{1}{2}\), \(h \to 0\), and \(nh^2 \to \infty\), Dette showed under the null hypothesis
\begin{equation*}
    n\sqrt{h} \hat{T} \overset{d}{\longrightarrow} N(0, \tau^2_0)
\end{equation*}
and under the alternative hypothesis 
\begin{equation*}
    \sqrt{n}\left(\hat{T} - \frac{1}{h} \iint K\left(\frac{x-y}{h}\right) \left(V(x) - \bar{V}\right)\left(V(y) - \bar{V}\right) \,dx\,dy\right) \overset{d}{\longrightarrow} N(0, \tau^2_V)
\end{equation*}
as \(n \to \infty\). As Dette notes, \(\frac{1}{h} \iint K\left(\frac{x-y}{h}\right) \left(V(x) - \bar{V}\right)\left(V(y) - \bar{V}\right) \,dx\,dy = ||V - \bar{V}\mathbf{1}||_2^2 + o(1)\) as \(n \to \infty\), and so a consistent test is obtained by rejecting the null when \(n \sqrt{h} \hat{T}\) is large. The convergence rates are different under the null and alternative hypotheses. However, Dette \cite{dette_consistent_2002} also shows asymptotic normality still holds under local alternatives which shrink along a fixed direction at a rate \((n\sqrt{h})^{-1/2}\). In this sense, with an appropriate choice of bandwidth, the test can detect local alternatives shrinking at a faster rate than that obtained earlier by Dette and Munk \cite{dette_testing_1998}.

Though Dette \cite{dette_consistent_2002} improves upon the earlier work of Dette and Munk \cite{dette_testing_1998}, many of the unresolved questions remain unresolved. There is still no optimality guarantee and still nothing is known when no smoothness is assumed or even when \(\alpha\) or \(\beta\) is below \(\frac{1}{2}\). Furthermore, we find it difficult to motivate the choice of examining local alternatives along a fixed direction in the current nonparametric setup. Worst case guarantees from a minimax perspective are more appropriate when considering function space. It is typically the case that there is no strong prior information for the statistician to commit to a specific direction in the infinite-dimensional parameter space. Furthermore, a narrow focus on directional alternatives can lead to misleading conclusions, as has been the case for multivariate goodness-of-fit testing in which the prototypical curse of dimensionality apparently was absent in earlier work (see \cite{arias-castro_remember_2018} for detailed discussion). Hence, we find it more meaningful to work under the minimax framework. All in all, the articles \cite{dette_testing_1998,dette_consistent_2002} develop interesting methodology but some foundational questions remain open.

Estimation of the variance function is a related problem. Wang et al. \cite{wang_effect_2008} showed that difference-based estimators can achieve better performance than residual-based estimators. It turns out to be more beneficial to use an estimator of \(f\) with minimal bias rather than plug in a traditional estimator. Typically, traditional nonparametric regression estimators are constructed to balance the bias and stochastic error. In the model (\ref{model}), Wang et al. \cite{wang_effect_2008} established the minimax rate for estimating \(V\) is
\begin{equation}\label{rate:estimation}
    \inf_{\hat{V}} \sup_{\substack{f \in \mathcal{H}_\alpha, \\ V \in \mathcal{H}_\beta, \\ V \geq 0}} E_{f,V}\left(||\hat{V} - V||_2^2\right) \asymp n^{-4\alpha} + n^{-\frac{2\beta}{2\beta+1}}.
\end{equation}
Notably, if the mean function is sufficiently smooth, i.e. \(4\alpha > \frac{2\beta}{2\beta+1}\), then the difficulty of estimation is driven entirely by \(V\) as if \(f\) was known completely. Likewise, the rate is dominated by the effect of the mean if \(f\) is very rough. The estimator constructed by Wang et al. \cite{wang_effect_2008} is a kernel estimator based on the squared first-order differences of the responses. 

Returning to the testing problem (\ref{problem:var0})-(\ref{problem:var1}) with the optimal estimator \(\hat{V}\) from \cite{wang_effect_2008} in hand, the quantity \(||V - \bar{V}\mathbf{1}||_2\) can be estimated via plug-in. This plug-in estimator can be used as a testing statistic, yielding the upper bound \(\varepsilon^*(\alpha, \beta)^2 \lesssim n^{-4\alpha} + n^{-\frac{2\beta}{2\beta+1}}\). However, this is not expected to be the sharp rate as a frequent theme in nonparametric statistics is the capacity to test at faster rates than to estimate, meaning plug-in tests are usually suboptimal \cite{ingster_nonparametric_2003,tsybakov_introduction_2009,gine_mathematical_2016}.

We will focus on three main questions which are currently unaddressed in the literature. First, a sharp characterization of the minimax separation rate \(\varepsilon^*(\alpha, \beta)\) has not been established. None of the aforementioned articles provide any optimality guarantees in any smoothness regimes for the tests they propose. Pinning down the fundamental statistical limits is an important and basic problem to resolve. Second, to the best of our knowledge, there are no procedures in the literature which are able to test heteroskedasticity in (\ref{model}) assuming \emph{no} smoothness on the variance function, despite existing results for the linear model (e.g. \cite{white_heteroskedasticity-consistent_1980}). Third, we will address adaptation to the H\"{o}lder exponent of the mean function. In some testing and functional estimation problems, it is widely known a logarithmic cost in the statistical rate is unavoidable \cite{spokoiny_adaptive_1996,lepski_problem_1990}. It remains to be seen whether adaptation is possible for heteroskedasticity testing.

\subsection{Main contributions}\label{section:main_contribution}
The main contribution of this work is to establish the minimax rates in Table \ref{table:rates}. A variety of settings are considered and are discussed in turn. 

\begin{table}[ht]
    \renewcommand{\arraystretch}{2.1}
    \centering
    \begin{tabular}{|c|c|c|}
        \hline
        Heteroskedasticity & Gaussian noise & Unknown noise \\
        \hline 
        \(L^2\) & \(n^{-4\alpha} + n^{-\frac{4\beta}{4\beta+1}} + n^{-2\beta}\) & \(n^{-4\alpha} + n^{-\frac{4\beta}{4\beta+1}} + n^{-2\beta}\) \\
        \hline 
        Design & \(n^{-4\alpha} + n^{-\left(\frac{1}{2} \vee \frac{4\beta}{4\beta+1}\right)}\) & \(n^{-4\alpha} + n^{-\frac{4\beta}{4\beta+1}} + n^{-2\beta}\) \\
        \hline
    \end{tabular}
    \caption{A summary of minimax rates for \(\alpha > 0\) and \(\beta \in (0, \frac{1}{2})\). The \(L^2\) norm is used to measure heteroskedasticity in (\ref{problem:var0})-(\ref{problem:var1}). In Sections \ref{section:profile} and \ref{section:discrete_loss}, the heteroskedasticity is measured only across the design points. }
    \label{table:rates}
\end{table}

For the heteroskedasticity testing problem (\ref{problem:var0})-(\ref{problem:var1}), where the heteroskedasticity is measured with respect to the \(L^2\) norm, under the model (\ref{model}), in which the precise distribution of the noise is unknown, we establish the sharp rate
\begin{equation}\label{rate:testing}
    \varepsilon^*(\alpha, \beta)^2 \asymp n^{-4\alpha} + n^{-\frac{4\beta}{4\beta+1}} + n^{-2\beta}
\end{equation}
for \(\alpha > 0\) and \(\beta \in \left(0, \frac{1}{2}\right)\). The rate is the same if the noise is known to be standard Gaussian. Notably, the testing rate (\ref{rate:testing}) shares the term \(n^{-4\alpha}\) with the estimation rate (\ref{rate:estimation}). In particular, when \(4\alpha \leq \frac{4\beta}{4\beta+1} \wedge 2\beta\) the problem of detecting heteroskedasticity is equally as hard (from the perspective of rates) as estimating the entire function \(V\). In other words, there is no statistical cost to using the plug-in test when the mean function is sufficiently rough. 

Previous work \cite{dette_testing_1998,dette_consistent_2002} has essentially viewed the heteroskedasticity detection as a problem of estimating the quadratic functional \(||V - \bar{V}\mathbf{1}||_2^2\). Instead, we aim at a different target \(\frac{1}{n} \sum_{i=0}^{n-1}\left(W_i + \delta_i^2 - \bar{W}_n - \overline{\delta^2_n}\right)^2\) where \(W_i = V\left(\frac{i+1}{n}\right) + V\left(\frac{i}{n}\right)\) and \(\delta_i = f\left(\frac{i+1}{n}\right) - f\left(\frac{i}{n}\right)\) for \(i=0,...,n-1\). Further, \(\bar{W}_n = \frac{1}{n} \sum_{i=0}^{n-1} W_i\) and \(\overline{\delta^2_n} = \frac{1}{n} \sum_{i=0}^{n-1} \delta_i^2\). This target acts as a proxy for the quadratic functional in that it is large when \(||V - \bar{V}\mathbf{1}||_2^2\) is large, thus containing signal to enable heteroskedasticity detection. The statistic we construct from the first-order squared differences estimate this proxy target.

In the regime \(\frac{1}{4} < \beta < \frac{1}{2}\), the rate (\ref{rate:testing}) simplifies to \(n^{-4\alpha} + n^{-\frac{4\beta}{4\beta+1}}\) and suggests smoothing is necessary. We construct a kernel-based test statistic in the spirit of \cite{dette_consistent_2002} (and in turn \cite{zheng_consistent_1996}). However, our construction departs from earlier work by carefully modifying the kernel function to further reduce bias. Moreover, the bias calculation departs from \cite{dette_consistent_2002} and adapts an idea of Nickl and Gin\'{e} \cite{gine_simple_2008} for estimation of a density quadratic functional in order to sharply capture the \(n^{-\frac{4\beta}{4\beta+1}}\) scaling. However, the fixed design setting of (\ref{model}) poses a substantial technical complication in successfully adapting this idea from \cite{gine_simple_2008}. When \(\beta < \frac{1}{4}\), the rate (\ref{rate:testing}) simplifies to \(n^{-4\alpha} + n^{-2\beta}\). The \(n^{-2\beta}\) term can be attributed to the discretization of the unit interval at the fixed design points in (\ref{model}). The familiar nonparametric term \(n^{-\frac{4\beta}{4\beta+1}}\) disappears, suggesting smoothing is not beneficial. Indeed, we use the same kernel-based statistic as in the case \(\frac{1}{4} < \beta < \frac{1}{2}\), but now take the bandwidth to be of order \(n^{-1}\).

Heteroskedasticity testing under no smoothness assumptions on \(V\) is a particularly important problem, in which the variance profile across the design points can be arbitrary. Specifically, we consider the problem (\ref{problem:var0_profile})-(\ref{problem:var1_profile}) in the model (\ref{model:profile}). In Section \ref{section:profile_triviality}, we show that consistent testing is impossible if the noise distribution is unknown as in (\ref{model}). However, if the noise is known to be standard Gaussian as in (\ref{model:profile}), we establish in Section \ref{section:profile} the minimax rate
\begin{equation*}
    \varepsilon^*(\alpha)^2 \asymp n^{-4\alpha} + n^{-1/2}.
\end{equation*}
Notably, consistent heteroskedasticity detection is still possible. The result identifies that the rate \(n^{-1/2}\) obtained by Dette and Munk \cite{dette_consistent_2002} actually holds without the smoothness assumptions they impose (provided \(\alpha\) is large enough). Conceptually, the case of an arbitrary variance profile corresponds to \(\beta = 0\) in the design heteroskedasticity row of Table \ref{table:rates}.

As there is no smoothness to exploit, a simpler statistic reminiscent of that proposed by Dette and Munk \cite{dette_testing_1998} (though they require \(\alpha, \beta > \frac{1}{2}\)) is used instead of a kernel-based one. The proposed statistic critically uses the Gaussianity of the noise, which is in contrast to the case where \(V\) is smooth and precise noise information becomes unnecessary. Additionally, the proxy used in place of the quadratic functional is more complicated due to the lack of smoothness. In the smooth setting, the target proxy is a function of the local aggregates \(W_i = V_i + V_{i+1}\), which is close (up to a factor of two) to \(V_i\) and \(V_{i+1}\) by the H\"{o}lder assumption. In essence, the variance profile \((V_1,...,V_n)\) can be approximately reconstructed from \(\{W_i\}_i\). However, this is no longer true when no smoothness is assumed; additional information is needed. For example, the profile for \(1 \leq i \leq n-2\) can be recovered from the collection \(\{V_i + V_{i+1}, V_{i} + V_{i+2}\}_i\) since \(V_i = \frac{1}{2}\left((V_i + V_{i+1}) + (V_i + V_{i+2}) - (V_{i+1} + V_{i+2})\right)\). The proxy we aim at incorporates additional information to overcome the lack of smoothness. We use not only the first-order differences \(\left\{Y_{i+1} - Y_i\right\}_i\), but also the differences \(\left\{Y_{i+2} - Y_i\right\}_i\) and \(\left\{Y_{i+3} - Y_i\right\}_i\).

The test statistic we propose requires knowledge of \(\alpha\) in setting the cutoff value defining the rejection criterion. The H\"{o}lder exponent is typically unknown in practice, so we consider adaptation to \(\alpha\) in Section \ref{section:adaptation}. By adaptation, we mean the existence of a testing procedure which does not require knowledge of \(\alpha\) but achieves the minimax rate as if it were known. It turns out adaptation to \(\alpha\) is completely impossible. Roughly speaking, if the statistician is faced with two possible smoothness classes for the mean function with H\"{o}lder exponents \(\alpha_1 < \alpha_2\), then the best possible testing procedure can only achieve the slower rate associated to \(\alpha_1\) even if the true mean function is \(\alpha_2\)-H\"{o}lder. This impossibility result is surprising given that adaptation is well-known to be possible (albeit with unavoidable logarithmic costs) in other hypothesis testing and quadratic functional estimation problems \cite{spokoiny_adaptive_1996,efromovich_optimal_1996,cai_optimal_2006}.

It is interesting to see noise information makes the heteroskedasticity testing problem possible in the setting of an arbitrary variance profile. It turns out noise information generically enables faster testing rates. In Section \ref{section:discrete_loss}, we study the setting of a \(\beta\)-H\"{o}lder variance function but where the heteroskedasticity is measured only with respect to the design points. It is shown that if the noise distribution is unknown (beyond mean zero, unit variance, and finite fourth moment), the minimax separation rate is \(\varepsilon^*(\alpha, \beta)^2 \asymp n^{-4\alpha} + n^{-\frac{4\beta}{4\beta+1}} + n^{-2\beta}\) for \(\alpha > 0\) and \(\beta \in (0, \frac{1}{2})\). On the other hand, the faster minimax rate \(\varepsilon^*(\alpha,\beta)^2 \asymp n^{-4\alpha} + n^{-\left(\frac{1}{2} \vee \frac{4\beta}{4\beta+1}\right)}\) is available if the noise distribution is known to be Gaussian.

\subsection*{Notation}
The following notation will be used throughout the paper. For \(n \in \mathbb{N}\), let \([n] := \{1,...,n\}\). Denote the set of positive integers by \(\Z_{+}\). For \(a, b \in \R\), denote \(a \vee b := \max\{a, b\}\) and \(a \wedge b = \min\{a , b\}\). Denote \(a \lesssim b\) to mean there exists a universal constant \(C > 0\) such that \(a \leq C b\). The expression \(a \gtrsim b\) means \(b \lesssim a\). Further, \(a \asymp b\) means \(a \lesssim b\) and \(b \lesssim a\). For functions $f,g:\Z\to \R$ define the discrete convolution $\ast_D$ as $(f\ast_Dg)(z) = \sum_{k\in \Z}f(k)g(z-k)$. The total variation distance between two probability measures \(P\) and \(Q\) on a measurable space \((\mathcal{X}, \mathcal{A})\) is defined as \(d_{TV}(P, Q) := \sup_{A \in \mathcal{A}} |P(A) - Q(A)|\). The product measure on the product space is denoted as \(P \otimes Q\). If \(P\) and \(Q\) are probability measures on \(\R\), then \(P * Q\) denotes the convolution of \(P\) and \(Q\), which is the distribution of \(X + Y\) where \(X \sim P\) and \(Y \sim Q\) are independent. If \(Q\) is absolutely continuous with respect to \(P\), the \(\chi^2\) divergence is defined as \(\chi^2(Q, P) := \int_{\mathcal{X}} \left(\frac{dQ}{dP} - 1\right)^2 \, dP\). For \(x \in \R\), the probability measure with full mass at \(x\) is denoted by \(\delta_x\). For a sequence of random variables \(\{X_n\}_{n \geq 1}\) and a random variable \(X\), the notation \(X_n \overset{d}{\longrightarrow} X\) as \(n \to \infty\) is used to denote \(X_n\) converges in distribution to \(X\). For a \(k\)-times differentiable function \(f : \R \to \R\), the function \(f^{(j)}\) denotes the \(j\)th derivative of \(f\) for \(1 \leq j \leq k\). For \(x \in \R\), the quantity \(\lfloor x \rfloor\) denotes the largest integer less than or equal to \(x\). The quantity \(\lceil x \rceil\) denotes the smallest integer greater than or equal to \(x\). For a square-integrable function \(f : [0, 1] \to \R\), the \(L^2\) norm is \(||f||_2 = \sqrt{\int_{0}^{1} f^2(x)\,dx}\). For a bounded function \(f : [0, 1] \to \R\), the \(L^\infty\) norm is \(||f||_\infty = \sup_{0 \leq x \leq 1} |f(x)|\). For a vector \(x \in \R^d\), the \(\ell^2\) norm is \(||x||_2 = \sqrt{\sum_{i=1}^{d} x_i^2}\) and the \(\ell^\infty\) norm is \(||x||_\infty = \max_{1 \leq i \leq d} |x_i|\).

\section{Smooth variance function}
In this section, we focus on testing (\ref{problem:var0})-(\ref{problem:var1}) under the model (\ref{model}). Namely, this section will establish the results in the \(L^2\) heteroskedasticity row of Table \ref{table:rates}.

\subsection{Upper bound}\label{section: upper bound}

In this section, we focus on establishing an upper bound for the minimax separation rate. It is crucial to note that unlike previous work \cite{dette_testing_1998,dette_consistent_2002}, our testing statistic is not intended to estimate the quadratic functional \(||V - \bar{V}\mathbf{1}||^2_2\). Instead, our strategy revolves around devising a test statistic which estimates the proxy
\begin{equation}\label{def: T}
    T = \frac{1}{n} \sum_{i=0}^{n-1}\left(W_i + \delta_i^2 - \bar{W}_n - \overline{\delta^2_n} \right)^2.
\end{equation}

\noindent Here, \(W_i = V\left(\frac{i}{n}\right)+V\left(\frac{i+1}{n}\right)\), \(\delta_i = f\left(\frac{i+1}{n}\right)-f\left(\frac{i}{n}\right)\) for \(0\leq i\leq n-1\), and \(\bar{W}_n\) and \(\overline{\delta^2_n}\) are the respective averages. The proxy $T$ and the quadratic functional $||V - \bar{V}\mathbf{1}||_2^2 $ obey the following inequality.
\begin{proposition}\label{prop: V-V bar less than T}
    Let \(f \in \mathcal{H}_\alpha\) and \(V \in \mathcal{H}_\beta\) with \(\alpha, \beta > 0\). Then we have
    \[
        ||V - \bar{V}\mathbf{1}||_2^2 \lesssim T + n^{-2(\beta \wedge 1)} + n^{-4(\alpha \wedge 1)}
    \]
    where \(T\) is defined as in (\ref{def: T}).
\end{proposition}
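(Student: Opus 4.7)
The plan is to relate the continuous target $\|V-\bar V\mathbf{1}\|_2^2$ to a discrete analog $\frac{1}{n}\sum_{i=0}^{n-1}(V_i-\bar V_n)^2$ with $V_i = V(i/n)$ and $\bar V_n = \frac{1}{n}\sum_{i=0}^{n-1} V_i$, and then to the proxy $T$. The error $n^{-2(\beta \wedge 1)}$ will materialize in the discretization step while $n^{-4(\alpha \wedge 1)}$ will materialize from separating the $\delta_i^2$ contribution in $T$.

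\emph{Step 1 (discretization).} Let $\hat V$ be the piecewise constant interpolation $\hat V(x)=V_i$ on $[i/n,(i+1)/n)$. The H\"older hypothesis gives $\|V-\hat V\|_\infty \lesssim n^{-(\beta\wedge 1)}$, hence $\|V-\hat V\|_2^2 \lesssim n^{-2(\beta\wedge 1)}$, and a standard Riemann-sum estimate gives $|\bar V - \bar V_n|\lesssim n^{-(\beta\wedge 1)}$. Applying $(a+b+c)^2\lesssim a^2+b^2+c^2$ to $V-\bar V\mathbf{1} = (V-\hat V) + (\hat V-\bar V_n\mathbf{1}) + (\bar V_n-\bar V)\mathbf{1}$, and using $\|\hat V - \bar V_n\mathbf{1}\|_2^2 = \frac{1}{n}\sum_{i=0}^{n-1}(V_i-\bar V_n)^2$ by the piecewise constant structure, reduces matters to showing $\frac{1}{n}\sum(V_i-\bar V_n)^2 \lesssim T + n^{-2(\beta\wedge 1)} + n^{-4(\alpha\wedge 1)}$.

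\emph{Step 2 (splitting $T$).} Set $A_i = W_i - \bar W_n$ and $B_i = \delta_i^2 - \overline{\delta_n^2}$ so that $T = \frac{1}{n}\sum(A_i+B_i)^2$. The elementary inequality $A_i^2 \le 2(A_i+B_i)^2 + 2 B_i^2$ yields $\frac{1}{n}\sum A_i^2 \le 2T + \frac{2}{n}\sum B_i^2$. The H\"older bound on $f$ implies $|\delta_i|\lesssim n^{-(\alpha\wedge 1)}$, hence $|B_i| \lesssim n^{-2(\alpha\wedge 1)}$ and $\frac{1}{n}\sum B_i^2 \lesssim n^{-4(\alpha\wedge 1)}$, giving $\frac{1}{n}\sum A_i^2 \lesssim T + n^{-4(\alpha\wedge 1)}$.

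\emph{Step 3 (relating $A_i$ to $V_i-\bar V_n$).} Since $W_i = V_i + V_{i+1}$ and $\bar W_n = 2\bar V_n + \frac{1}{n}(V_n - V_0)$, one has $A_i = 2(V_i-\bar V_n) + (V_{i+1}-V_i) - \frac{1}{n}(V_n-V_0)$. By H\"older continuity of $V$ and its boundedness, $|V_{i+1}-V_i|\lesssim n^{-(\beta\wedge 1)}$ and $|V_n-V_0|/n \lesssim n^{-1} \lesssim n^{-(\beta\wedge 1)}$. Applying $(a+b+c)^2 \lesssim a^2+b^2+c^2$ pointwise then gives $(V_i-\bar V_n)^2 \lesssim A_i^2 + n^{-2(\beta\wedge 1)}$. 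Summing and combining with Steps~1 and~2 finishes the argument.

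No single step is genuinely delicate; the only point that needs care is securing the sharp $n^{-2(\beta\wedge 1)}$ exponent (rather than the crude $n^{-(\beta\wedge 1)}$ one would get from a naive comparison between a continuous $L^2$ norm and a Riemann sum). This is handled above by squaring first and then approximating, which is the reason for introducing $\hat V$ and $\bar V_n$ as intermediaries rather than attempting to replace $\bar V$ by $\bar V_n$ directly inside the integrand.
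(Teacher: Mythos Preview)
Your proof is correct and follows essentially the same approach as the paper. The paper carries out the same decomposition in a single step---writing $2V(x)-2\bar V = (2V(x)-W_i) + (W_i+\delta_i^2-\bar W_n-\overline{\delta_n^2}) + (\bar W_n-2\bar V) + (\overline{\delta_n^2}-\delta_i^2)$ inside the integral and applying $(a+b+c+d)^2\lesssim a^2+b^2+c^2+d^2$ once---whereas you split this into a discretization step (via $\hat V$ and $\bar V_n$) followed by the algebraic reduction to $T$; the ingredients (H\"older bounds $|\delta_i|\lesssim n^{-(\alpha\wedge 1)}$, $|V_{i+1}-V_i|\lesssim n^{-(\beta\wedge 1)}$, and the elementary squaring inequality) are identical.
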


\begin{proof}
  When \(\alpha \leq 1\), since \(f \in \mathcal{H}_\alpha\) we can establish that \(|\delta_i| \lesssim n^{-\alpha}\). When \(\alpha > 1\), we have \(|\delta_i|\lesssim n^{-1}\) since \(\mathcal{H}_\alpha\) is a subset of \(\mathcal{H}_1\). Consequently, we can deduce \(\delta_i^2 \lesssim n^{-2(\alpha \wedge 1)}\) for any $\alpha>0$. Similarly, since $V\in \mathcal{H}_\beta$, for any fixed $0\leq i\leq n-1$ and $\frac{i}{n} \leq x\leq \frac{i+1}{n}$ we have $|2V(x) - W_i| \lesssim n^{-(\beta \wedge 1)}$ for any $\beta>0$. Using these bounds and the useful inequality $(a+b)^2 \lesssim a^2 + b^2$ for reals $a$ and $b$, we can deduce that

  \begin{align*}
     ||V - \bar{V}\mathbf{1}||_2^2 
     &= \int_{0}^{1} (V(x) - \bar{V})^2 \, dx \\
     &= \frac{1}{4}\sum_{i=0}^{n-1} \int_{\frac{i}{n}}^{\frac{i+1}{n}} (2V(x) - 2\bar{V})^2\, dx \\
     &= \frac{1}{4}\sum_{i=0}^{n-1} \int_{\frac{i}{n}}^{\frac{i+1}{n}} \left(2V(x) - W_i + W_i + \delta_i^2 - \bar{W}_n - \overline{\delta^2_n} + \bar{W}_n - 2\bar{V} - \delta_i^2 + \overline{\delta^2_n}\right)^2 \, dx \\
     &\lesssim \sum_{i=0}^{n-1} \int_{\frac{i}{n}}^{\frac{i+1}{n}} \left(2V(x) - W_i\right)^2 + \left(W_i + \delta_i^2 - \bar{W}_n - \overline{\delta^2_n}\right)^2 + \left(\bar{W}_n - 2\bar{V}\right)^2 + \left(\delta_i^2 - \overline{\delta^2_n}\right)^2 \, dx \\
     &\lesssim n^{-2(\beta \wedge 1)} + n^{-4(\alpha \wedge 1)} + \frac{1}{n} \sum_{i=0}^{n-1} \left(W_i + \delta_i^2 - \bar{W}_n - \overline{\delta^2_n}\right)^2 \\
     &=  n^{-2(\beta \wedge 1)} + n^{-4(\alpha \wedge 1)} + T.
  \end{align*}
\end{proof}
    Under the null hypothesis, it is clear \(T \lesssim n^{-4(\alpha \wedge 1)}\) and so if $\hat{T}$ estimates $T$ well, then $\hat{T}$ will also be small with high probability. Conversely, under the alternative hypothesis, it follows from Proposition \ref{prop: V-V bar less than T} that for $\alpha, \beta>0$,
    \begin{equation*}
       ||V - \bar{V}\mathbf{1}||_2^2  \lesssim n^{-4(\alpha \wedge 1)} + n^{-2(\beta\wedge1)} + T.
    \end{equation*}
    Therefore, if \( ||V - \bar{V}\mathbf{1}||_2^2\) is large, and if $\hat{T}$ estimates $T$ well, then it must be the case that \( \hat{T}\) is also large. Hence, establishing an appropriate cutoff for \(\hat{T}\) will serve as a test for detecting heteroskedasticity.
    
    This motivates us to construct a statistic \(\hat{T}\) that can estimate \(T\) at an appropriate rate. To accomplish this, we will use a kernel estimator based on first-order differences
    \begin{equation}\label{def: R_i}
        R_i = Y_{i+1} - Y_{i}
    \end{equation}
    for \(i = 0,...,n-1\). Importantly, \(E(R_i^2) = \delta_i^2 +W_i\). First-order differences have been employed in prior works, such as \cite{wang_effect_2008} for estimating the variance function and \cite{dette_consistent_2002} for testing heteroskedasticity.
    
	Let $K$ be a kernel function, supported on $[-1,1]$, symmetric about $0$, bounded above and below by a universal constant, and $\int_{-1}^1K(x) \,dx = 1$. As the design in (\ref{model}) is fixed, we will construct a kernel similar to that constructed in \cite{wang_effect_2008}. For $h>0$ and $t\in \Z$ define,
    \begin{equation}\label{def: kernel}
         K_n^h(t) = {\displaystyle \frac{\displaystyle \int_{|t|/n}^{(|t|+1)/n} \frac{1}{h}K\left(\frac{u}{h}\right) du }{ \displaystyle 1 - \int_{-2/n}^{2/n} \frac{1}{h}K\left(\frac{u}{h}\right) du }} \cdot \mathbbm{1}_{\{|t| \geq 2\}}.
    \end{equation}
The construction via integration between the fixed design points follows the construction in \cite{wang_effect_2008}. However, for bias reasons (which is discussed further in Remark \ref{remark:low_debias}), we restrict to only considering \(|t| \geq 2\) in (\ref{def: kernel}). Consequently, the normalization term is needed in (\ref{def: kernel}) to account for the deletion of some mass. Using this kernel (\ref{def: kernel}) and the first-order differences (\ref{def: R_i}), define the test statistic
\begin{equation}\label{def: testing_statistic beta>1/4}
    \hat{T} = \frac{1}{n}\sum_{|i-j|\geq 2} K_n^h(i-j)R_i^2R_j^2 -  \frac{1}{n^2}\sum_{|i-j|\geq 2}R_i^2R_j^2.
\end{equation}

\noindent The sum in (\ref{def: testing_statistic beta>1/4}) is taken over \(|i-j|\geq 2\) to ensure independence between $R_i$ and $R_j$. This testing statistic resembles a kernel-based statistic similar to the one used in \cite{dette_consistent_2002}.

 \begin{proposition}\label{prop: MSE of T_hat beta>1/4}
    Suppose \(f \in \mathcal{H}_\alpha\) and \(V \in \mathcal{H}_\beta\) where \(\alpha> 0\) and $0<\beta<\frac{1}{2}$. Define $T$ and \(\hat{T}\) as in (\ref{def: T}) and (\ref{def: testing_statistic beta>1/4}) respectively and let $0<h<1$. If
        \begin{equation}\label{eqn: kernel integral condition}
            \left| 1 - \int_{-2/n}^{2/n} \frac{1}{h}K\left(\frac{u}{h}\right) du \right| \geq c,
        \end{equation}
    for some universal constant $c>0$, then we have
      \begin{align}
         E_{f,V}\left(\left|\hat{T} - T\right|^2\right) \lesssim n^{-8(\alpha \wedge 1)}+ h^{4\beta} + h^2 + \frac{1}{n^2h} + \frac{||W-\bar{W}_n\mathbf{1}_n||^2_2}{n^2}. \label{eqn: MSE expression T_hat<1/2}
      \end{align}
    Here, $W = (W_0, \dots, W_{n-1}) \in \R^n$ and $\mathbf{1}_n$ is the vector of ones with length $n$.
\end{proposition}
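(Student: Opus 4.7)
The plan is a bias--variance decomposition,
\[
   E_{f,V}\bigl(|\hat{T}-T|^2\bigr) = \bigl(E_{f,V}(\hat{T})-T\bigr)^2 + \Var_{f,V}(\hat{T}),
\]
with the bias handled via a Gin\'e--Nickl-style symmetrization that exploits the symmetry of $K_n^h$, and the variance controlled by treating $\hat{T}$ as an approximate degenerate $U$-statistic over the independent blocks $\{R_i\}$. Setting $\mu_i := E(R_i^2) = W_i + \delta_i^2$, I would first observe that the restriction $|i-j|\geq 2$ in (\ref{def: testing_statistic beta>1/4}) forces $R_i$ and $R_j$ to be independent, so $E(R_i^2R_j^2) = \mu_i\mu_j$ whenever that restriction holds.

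For the bias, I would write $T = \frac{1}{n}\sum_i\mu_i^2 - \bar\mu^2$, absorb a diagonal $O(1/n)$ correction coming from comparing $\frac{1}{n^2}\sum_{|i-j|\geq 2}\mu_i\mu_j$ with $\bar\mu^2$, and establish the symmetrization identity
\[
   \frac{1}{n}\sum_{|i-j|\geq 2}K_n^h(i-j)\mu_i\mu_j - \frac{1}{n}\sum_i K_i\mu_i^2 = -\frac{1}{2n}\sum_{|i-j|\geq 2}K_n^h(i-j)(\mu_i-\mu_j)^2,
\]
where $K_i := \sum_{j:\,|i-j|\geq 2}K_n^h(i-j)$, by averaging the left-hand side with its image under $i\leftrightarrow j$. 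The right-hand side should then be $O(h^{2\beta} + n^{-2(\alpha\wedge 1)})$ from the H\"older bounds $|W_i-W_j|\lesssim(|i-j|/n)^\beta$ and $|\delta_i^2-\delta_j^2|\lesssim n^{-2(\alpha\wedge 1)}$ (the latter as in the proof of Proposition~\ref{prop: V-V bar less than T}), combined with the discrete moment estimate $\sum_t K_n^h(t)(|t|/n)^{2\beta}\lesssim h^{2\beta}$; this is where condition (\ref{eqn: kernel integral condition}) enters, to keep the denominator of $K_n^h$ bounded away from zero. The residual $\frac{1}{n}\sum_i(K_i-1)\mu_i^2$ is then a pure boundary effect of size $O(h)$, since $K_i = 1$ except on the $O(nh)$ indices within the bandwidth of $\{0,n-1\}$; squaring yields the $h^{4\beta} + h^2 + n^{-8(\alpha\wedge 1)}$ contributions to the MSE.

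For the variance, I would write $\hat{T}=\sum_{i,j}a_{ij}R_i^2R_j^2$ with $a_{ij}:=[K_n^h(i-j)/n-1/n^2]\mathbbm{1}\{|i-j|\geq 2\}$ and expand $\Var_{f,V}(\hat{T})$ as a sum of covariances over quadruples $(i,j,k,l)$. Because $\Var(R_i^2)$ is uniformly bounded (via the finite fourth moment of $\xi_i$ and the boundedness of $V$) and the $R_i$ at indices at distance at least two are independent, only pairs $\{i,j\},\{k,l\}$ sharing at least one index contribute. The two-index overlap contributes $\lesssim \sum_{i,j}a_{ij}^2$, which is $O(1/(n^2 h))$ using the estimate $\sum_t K_n^h(t)^2\lesssim 1/(nh)$ coming from $K_n^h(t)\asymp (nh)^{-1}K(t/(nh))$. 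The one-index overlap reduces, after an explicit covariance computation of the form $\Cov(R_i^2R_j^2, R_i^2R_l^2) = \Var(R_i^2)\mu_j\mu_l$, to $\lesssim \sum_i\bigl((K_n^h\ast_D\mu)(i) - \bar\mu\bigr)^2/n^2$, which by Cauchy--Schwarz applied with $\sum_t K_n^h(t) = 1$ is bounded by $\|\mu - \bar\mu\mathbf{1}_n\|_2^2/n^2$; decomposing $\mu = W + \delta^2$ and absorbing the $\delta^2$ contribution into $n^{-8(\alpha\wedge 1)}$ and $1/(n^2h)$ then produces the desired $\|W-\bar W_n\mathbf{1}_n\|_2^2/n^2$ term.

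The hardest step will be the rigorous execution of the symmetrization in the fixed-design setting. Since $K_n^h$ is a discrete object built by integrating $\frac{1}{h}K(\cdot/h)$ over cells of width $1/n$ and renormalized after excluding $|t|<2$, the exact normalization $\sum_{|t|\geq 2}K_n^h(t) = 1$, the discrete moment bound above, and the boundary correction (which is what produces the specific $h^2$ contribution rather than a faster rate) all require delicate bookkeeping tracking the interaction between the kernel support, the exclusion $|i-j|\geq 2$, and the normalizing denominator in (\ref{eqn: kernel integral condition}). Once these discrete kernel estimates are in hand, the remaining steps reduce to routine applications of H\"older smoothness and moment bounds on $\xi_i$.
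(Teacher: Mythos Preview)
Your proposal is correct and the overall architecture (bias--variance split, degenerate $U$-statistic analysis for the variance) matches the paper. However, your bias argument takes a genuinely different route.

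For the bias, the paper does \emph{not} use your polarization identity. Instead, for the dominant $W_iW_j$ cross-term it rewrites $\sum_{i,j}K_n^h(i-j)W_iW_j - \sum_i W_i^2$ as $\sum_t K_n^h(t)\bigl[(\tilde W \ast_D \tilde W^-)(t) - (\tilde W \ast_D \tilde W^-)(0)\bigr]$ and then proves (Proposition~\ref{prop: convolution}, via a second-difference/Zygmund-type lemma) that the discrete autoconvolution $\tilde W \ast_D \tilde W^-$ is $2\beta$-H\"older on $[-n,n]\cap\Z$; this is precisely the step flagged in Remark~\ref{remark: convolution} as the delicate adaptation of Gin\'e--Nickl to fixed design. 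Your symmetrization
\[
\sum_{|i-j|\geq 2} K_n^h(i-j)\mu_i\mu_j - \sum_i K_i \mu_i^2 \;=\; -\tfrac{1}{2}\sum_{|i-j|\geq 2} K_n^h(i-j)(\mu_i-\mu_j)^2
\]
reaches the same $h^{2\beta}$ bound more directly: once $(\mu_i-\mu_j)^2 \lesssim (|i-j|/n)^{2\beta} + n^{-4(\alpha\wedge 1)}$, the discrete moment estimate $\sum_t |K_n^h(t)|(|t|/n)^{2\beta}\lesssim h^{2\beta}$ suffices. Your approach is more elementary and sidesteps the convolution lemma entirely; the paper's route is closer to the density-estimation literature it is trying to parallel. (Minor quibble: Gin\'e--Nickl themselves use the convolution argument, not symmetrization, so calling your identity ``Gin\'e--Nickl-style'' is a slight misattribution.)

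For the variance, the paper bounds the one-overlap term by inserting the intermediary $(\sum_j K_n^h(i-j)W_j - W_i) + (W_i - \bar W_n)$ and applying the smoothing estimate to the first piece; your direct Cauchy--Schwarz bound $\sum_i |(K_n^h\ast_D \mu)(i) - \bar\mu|^2 \lesssim \|\mu - \bar\mu\mathbf{1}_n\|_2^2 + O(nh)$ is equally valid and slightly cleaner. One small imprecision: you write that ``only pairs sharing at least one index contribute,'' but since $R_i$ and $R_{i\pm 1}$ are dependent (they share $Y_{i+1}$ or $Y_i$), the relevant overlap condition is proximity within distance~$1$, not exact equality of indices. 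The paper tracks this carefully via the sets $G_{ij}=\{i-1,i,i+1,j-1,j,j+1\}$; these near-neighbor contributions are bounded exactly as the exact-match terms, so this is only a matter of bookkeeping.
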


\begin{remark}\label{remark: convolution}
    The key step in attaining the rate outlined in Proposition \ref{prop: MSE of T_hat beta>1/4} is to derive the term $h^{4\beta}$, which is a component of the bias calculation. In \cite{wang_effect_2008}, Wang et al. obtain $h^{2\beta}$ in the squared bias, but this is suboptimal for our scenario. The additional level of smoothness arises because we are estimating the quadratic functional $||V-\bar{V}\mathbf{1}||_2^2$ rather than estimating the entire variance function $V$ as in \cite{wang_effect_2008}.
    To achieve this rate, we employ an argument similar to \cite{gine_simple_2008}, where we analyze the smoothness of the discrete convolution of the variance function with itself. However, it is important to note that this argument cannot be directly applied due to the fixed design nature of our model (\ref{model}),{ whereas \cite{gine_simple_2008} deals with an i.i.d. model which does not have the fixed design constraints}.
\end{remark}

\begin{remark}\label{remark:low_debias}
	The main distinction between the kernels of \cite{dette_consistent_2002, wang_effect_2008} and (\ref{def: kernel}) lies in the exclusion of weights for $|i-j|\leq 1$ and the subsequent normalization. This construction is done to further reduce bias since the summation in (\ref{def: testing_statistic beta>1/4}) only involves terms for $|i-j|\geq 2$ to ensure the independence of \(R_i^2\) and \(R_j^2\). To illustrate why we exclude weights for \(|i-j| \leq 1\), consider the estimator using a typical kernel which does not have this modification, 
		\begin{equation}\label{def:That_non_delete}
		\hat{T} = \frac{1}{n^2} \sum_{|i-j| \geq 2} \frac{1}{h}K\left(\frac{x_i - x_j}{h}\right) R_i^2R_j^2 - \frac{1}{n^2} \sum_{|i-j| \geq 2} R_i^2R_j^2. 
	\end{equation}
	Recall the proxy \(T\) given by (\ref{def: T}) and note we can write \(T = \left(\frac{1}{n} \sum_{i=0}^{n-1} U_i^2\right) - \bar{U}_n^2\) where \(U_i = W_i + \delta_i^2\) and \(\bar{U}_n\) is the average. For sake of illustrating the need for bias reduction, let us focus only on the bias associated to estimating the first term \(T_1 = \frac{1}{n} \sum_{i=0}^{n-1} U_i^2\) with the first term in (\ref{def:That_non_delete}) which we denote as \(\hat{T}_1\). 
	Noting \(E(R_i^2) = U_i\), the squared bias is 
	\begin{align}
		\left|E\left(\hat{T}_1\right) - T_1\right|^2 &= \left|\frac{1}{n^2h} \sum_{|i-j| \geq 2} K\left(\frac{x_i - x_j}{h}\right) U_i U_j - \frac{1}{n} \sum_{i=0}^{n-1} U_i^2\right|^2 \nonumber \\
		&\lesssim  \left|\frac{1}{n^2h} \sum_{i, j} K\left(\frac{x_i-x_j}{h}\right) U_iU_j - \frac{1}{n} \sum_{i=0}^{n-1} U_i^2\right|^2 + \left|\frac{1}{n^2h} \sum_{|i-j| \leq 1} K\left(\frac{x_i - x_j}{h}\right) U_iU_j\right|^2 \label{eqn:deletion_debias}\\
		&\lesssim h^{4\beta} + \frac{1}{n^2h^2}. \nonumber
	\end{align}
	The first term in (\ref{eqn:deletion_debias}) is obtained via a convolution argument adapted from \cite{gine_simple_2008} as described in Remark \ref{remark: convolution}. The second term follows from boundedness of $K$, $f$ and $V$. However, the latter term $\frac{1}{n^2h^2}$ turns out to be problematic, as will be seen shortly. Calculating the variance of \(\hat{T}\) in a manner similar to that in Proposition \ref{prop: MSE of T_hat beta>1/4}, the mean squared error is bounded as
	\begin{equation*}
		E\left(\left|\hat{T} - T\right|^2\right) \lesssim n^{-8(\alpha \wedge 1)} + h^{4\beta} + h^2 + \frac{1}{n^2h^2} + \frac{1}{n^2h} + \frac{||W - \bar{W}_n\mathbf{1}_n||_2^2}{n^2}.
	\end{equation*}
	Clearly, the term \(\frac{1}{n^2 h}\) is dominated by \(\frac{1}{n^2h^2}\) since \(h < 1\). Therefore, the optimal choice of \(h\) must balance \(h^{4\beta} \asymp \frac{1}{n^2h^2}\), which yields the choice \(h \asymp n^{-\frac{1}{2\beta+1}}\). Consequently, the test furnished from \(\hat{T}\) would only yield the upper bound \(\varepsilon^*(\alpha, \beta)^2 \lesssim n^{-4\alpha} + n^{-\frac{2\beta}{2\beta+1}}\), which no better than the plug-in test using a variance function estimator \cite{wang_effect_2008}. In fact, the statistic proposed by Dette \cite{dette_consistent_2002} is very close to (\ref{def:That_non_delete}) and likewise suffers from the excess squared bias \(\frac{1}{n^2h^2}\) (for example, (A.7) in \cite{dette_consistent_2002}). 

	To eliminate the problematic \(\frac{1}{n^2h^2}\) term, we modify the kernel to debias the statistic. Namely, if it could be guaranteed that \(\frac{1}{h}K\left(\frac{x_i - x_j}{h}\right) = 0\) whenever \(|i-j| \leq 1\), then the second term in (\ref{eqn:deletion_debias}) would vanish and the bad term would be removed. The construction in (\ref{def: kernel}) is engineered with this purpose in mind.
\end{remark}

By an appropriate selection of the bandwidth in Proposition \ref{prop: MSE of T_hat beta>1/4}, we can establish the following theorem.

\begin{theorem}\label{theorem: MSE of T_hat}
    Suppose \(f \in \mathcal{H}_\alpha\) and \(V \in \mathcal{H}_\beta\) where \(\alpha > 0\) and \(0 < \beta < \frac{1}{2}\). Define \(T\) as in (\ref{def: T}) and $\hat{T}$ as in (\ref{def: testing_statistic beta>1/4}). If $h = C_h n^{-\left(\frac{2}{4\beta+1}\wedge 1\right)}$ for a sufficiently large universal constant $C_h>0$, then
      \begin{equation}\label{eqn: MSE expression T_hat}
         E_{f,V}\left(\left|\hat{T} - T\right|^2\right) \lesssim n^{-8\alpha} + n^{-4\beta} + n^{-\frac{8\beta}{4\beta+1}} + \frac{||V-\bar{V}\mathbf{1}||^2_2}{n}.
      \end{equation}
\end{theorem}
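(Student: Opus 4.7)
The strategy is to invoke Proposition \ref{prop: MSE of T_hat beta>1/4} with the prescribed bandwidth and then bookkeep each term. Two preconditions must first be verified: that $h<1$ and that the kernel normalization bound (\ref{eqn: kernel integral condition}) holds. The first is immediate once $n$ exceeds a threshold depending on $C_h$, since $h$ is a negative polynomial in $n$ (and the theorem becomes vacuous for smaller $n$ by inflating constants). For the second, the substitution $v=u/h$ rewrites the kernel integral as $\int_{-2/(nh)}^{2/(nh)} K(v)\,dv \leq 4\|K\|_\infty/(nh)$. The chosen bandwidth gives $nh \geq C_h$ in every case—namely $nh \asymp n^{(4\beta-1)/(4\beta+1)} \to \infty$ when $\beta>\tfrac{1}{4}$, and $nh = C_h$ when $\beta \leq \tfrac{1}{4}$—so taking $C_h$ sufficiently large forces this integral to be bounded away from $1$, securing (\ref{eqn: kernel integral condition}).

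Next, I substitute the bandwidth into the bound of Proposition \ref{prop: MSE of T_hat beta>1/4}. When $\beta \in [\tfrac{1}{4},\tfrac{1}{2})$, $h \asymp n^{-2/(4\beta+1)}$ gives $h^{4\beta} \asymp n^{-8\beta/(4\beta+1)}$ and $(n^2h)^{-1} \asymp n^{-8\beta/(4\beta+1)}$, while $h^2 \asymp n^{-4/(4\beta+1)}$ is dominated by $h^{4\beta}$ since $\beta < \tfrac{1}{2}$. When $\beta \in (0,\tfrac{1}{4})$, $h = C_h n^{-1}$ gives $h^{4\beta} \asymp n^{-4\beta}$, which dominates both $h^2 = n^{-2}$ and $(n^2 h)^{-1} \asymp n^{-1}$. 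In either regime $h^{4\beta} + h^2 + (n^2 h)^{-1} \lesssim n^{-8\beta/(4\beta+1)} + n^{-4\beta}$. The term $n^{-8(\alpha \wedge 1)}$ equals $n^{-8\alpha}$ when $\alpha \leq 1$, and is bounded by $n^{-8} \lesssim n^{-4\beta}$ when $\alpha > 1$, so it can be replaced by $n^{-8\alpha}$ up to the other terms.

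The remaining step—and the main technical point—is to replace the discrete remainder $\|W - \bar W_n \mathbf{1}_n\|_2^2 / n^2$ by the continuous quantity $\|V - \bar V\mathbf{1}\|_2^2 / n$. Because $\bar W_n$ minimizes $c \mapsto \sum_i (W_i - c)^2$, one has $\|W - \bar W_n \mathbf{1}_n\|_2^2 \leq \sum_i (W_i - 2\bar V)^2$. Writing $W_i - 2\bar V = (V(i/n) - \bar V) + (V((i+1)/n) - \bar V)$ and applying $(a+b)^2 \leq 2(a^2+b^2)$ bounds this by $4\sum_{j=0}^{n}(V(j/n) - \bar V)^2$. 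A Riemann-sum estimate using $V \in \mathcal{H}_\beta$ (so $|V(x) - V(j/n)| \lesssim n^{-\beta}$ on $[j/n,(j+1)/n]$, with $V$ bounded) gives $\tfrac{1}{n}\sum_j (V(j/n) - \bar V)^2 = \|V - \bar V\mathbf{1}\|_2^2 + O(n^{-\beta})$, whence $\|W - \bar W_n \mathbf{1}_n\|_2^2 / n^2 \lesssim \|V - \bar V\mathbf{1}\|_2^2 / n + n^{-1-\beta}$. The correction $n^{-1-\beta}$ is absorbed into $n^{-8\beta/(4\beta+1)}$ since $(1+\beta)(4\beta+1) \geq 8\beta$ for all $\beta > 0$ (the discriminant of $4\beta^2 - 3\beta + 1$ is negative). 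Collecting everything yields (\ref{eqn: MSE expression T_hat}).
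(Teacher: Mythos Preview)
Your proof is correct and follows essentially the same route as the paper: verify the kernel normalization condition via $nh \geq C_h$, substitute the bandwidth into Proposition~\ref{prop: MSE of T_hat beta>1/4}, and convert the discrete remainder $\|W-\bar W_n\mathbf{1}_n\|_2^2/n^2$ into $\|V-\bar V\mathbf{1}\|_2^2/n$ plus a negligible correction. The only cosmetic difference is in this last step: the paper writes $W_i-\bar W_n$ as $(W_i-2V(x))+(2V(x)-2\bar V)+(2\bar V-\bar W_n)$ and integrates, obtaining a correction of order $n^{-1-2\beta}$, whereas you use the minimizing property of $\bar W_n$ together with a first-order Riemann-sum estimate, obtaining the slightly weaker $n^{-1-\beta}$; both corrections are absorbed by $n^{-8\beta/(4\beta+1)}$, so the distinction is immaterial.
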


\begin{remark}\label{remark:optimal_bandwidth_low}
    By selecting \(h = C_hn^{-\left(\frac{2}{4\beta+1} \wedge 1\right)}\) with \(C_h\) sufficiently large, it is feasible to find \(c\) that satisfies the condition (\ref{eqn: kernel integral condition}) in Proposition \ref{prop: MSE of T_hat beta>1/4}. It is important to note that when \(\beta>\frac{1}{4}\), we opt for a bandwidth \(h\asymp n^{-\frac{2}{4\beta+1}}\) to ensure \(h^{4\beta} \asymp \frac{1}{n^2h} \asymp n^{-\frac{8\beta}{4\beta+1}}\).
    However, when \(\beta\leq \frac{1}{4}\), simply setting \(h\asymp n^{-\frac{2}{4\beta+1}}\) would violate the condition (\ref{eqn: kernel integral condition}). In this case, if we choose \(h\asymp n^{-1}\), we can still satisfy (\ref{eqn: kernel integral condition}) achieve the optimal rate. Therefore, we observe a transition in the bandwidth selection at \(\beta = \frac{1}{4}\).
\end{remark}

\begin{remark}
    The \(h^2\) term in the bound of Proposition \ref{prop: MSE of T_hat beta>1/4} arises due to well-known boundary effects appearing in nonparametric regression. In our low-smoothness setting \(0 < \beta < \frac{1}{2}\), the boundary effects can be simply ignored since \(h < 1\) implies $h^{4\beta} \gtrsim h^2$ in the bound of Proposition \ref{prop: MSE of T_hat beta>1/4}. As noted in Section \ref{section:discussion}, addressing higher smoothness requires proper accounting for the boundary effects. We leave the case \(\beta \geq \frac{1}{2}\) open for future work.
\end{remark}

As promised, Theorem \ref{theorem: MSE of T_hat} gives us a $\hat{T}$ with an appropriate mean squared error (MSE) bound (\ref{eqn: MSE expression T_hat}). The following test will be used to test (\ref{problem:var0})-(\ref{problem:var1}),
\begin{equation}\label{test:optimal_test}
    \varphi^* = \mathbbm{1}_{\left\{\hat{T} > C_\eta' \zeta^2\right\}}    
\end{equation}
where \(\zeta = n^{-2\alpha} + n^{-\beta} + n^{-\frac{2\beta}{4\beta+1}}\) and \(C_\eta' > 0\) is a constant chosen to achieve testing risk \(\eta\). Utilizing Proposition \ref{prop: V-V bar less than T}, we have the following theorem.

\begin{theorem}\label{theorem: optimal test beta>1/4}
  Suppose \(\alpha > 0\) and \(\beta \in (0, \frac{1}{2})\). Fix \(\eta \in (0, 1)\). Then there exists \(C_\eta', C_\eta > 0\) depending only on \(\eta\) such that for all \(C > C_\eta\), we have 
  \begin{align*}
      \sup_{\substack{f \in \mathcal{H}_\alpha, \\ V \in \mathcal{V}_0}} P_{f,V}\left\{ \varphi^* = 1\right\} 
      + \sup_{\substack{f \in \mathcal{H}_\alpha, \\ V \in \mathcal{V}_{1,\beta}(C\zeta)}} P_{f,V}\left\{\varphi^* = 0\right\} \leq \eta
  \end{align*}
  where \(\varphi^*\) is given by (\ref{test:optimal_test}) and \(\zeta = n^{-2\alpha} + n^{-\beta} + n^{-\frac{2\beta}{4\beta+1}}\).
\end{theorem}

\noindent For testing problem (\ref{problem:var0})-(\ref{problem:var1}), Theorem \ref{theorem: optimal test beta>1/4} yields the upper bound $\varepsilon^*(\alpha, \beta)^2 \lesssim  n^{-4(\alpha \wedge 1)} + n^{-2\beta} + n^{-\frac{4\beta}{4\beta+1}}$ for \(\alpha > 0\) and \(\beta \in (0, \frac{1}{2})\).

\subsection{Lower bound}\label{section:lower_bounds}
In this section, lower bounds for the minimax separation rate are established. In fact, we establish lower bounds for all \(\beta > 0\), not just the regime \(\beta \in (0, \frac{1}{2})\). As in \cite{wang_effect_2008}, it is assumed in the lower bound argument that the noise is Gaussian, that is \(\xi_i \overset{iid}{\sim} N(0, 1)\) in (\ref{model}). Thus, the lower bound will apply to both the cases where the noise is standard Gaussian and where the noise is unknown; the lower bounds in this section apply to the entire \(L^2\) heteroskedasticity row of Table \ref{table:rates}. As seen in (\ref{rate:testing}), there are three terms to establish corresponding to different phenomena. Each is discussed in turn. 

The \(n^{-4\alpha}\) term in (\ref{rate:testing}) is due to the presence of \(f\) in (\ref{model}), which is a nuisance for detecting heteroskedasticity. Interestingly, the effect of \(f\) is the same in the variance estimation problem as seen in (\ref{rate:estimation}). For the purpose of establishing lower bounds, it can be assumed without loss of generality \(0 < \alpha < \frac{1}{4}\). Otherwise, \(n^{-4\alpha}\) is dominated by \(n^{-1}\), which in turn is dominated by \(n^{-\frac{4\beta}{4\beta+1}}\) in (\ref{rate:testing}). Therefore, \(n^{-4\alpha}\) is relevant only in the low-smoothness regime \(0 < \alpha < \frac{1}{4}\).  
\begin{proposition}\label{prop:nuisance_lbound}
    Suppose \(0 < \alpha < \frac{1}{4}\) and \(\beta > 0\). Assume \(n\) is larger than some sufficiently large constant. If \(\eta \in (0, 1)\), then there exists \(c_\eta > 0\) not depending on \(n\) such that for all \(0 < c < c_\eta\) we have 
    \begin{equation*}
        \inf_{\varphi}\left\{ \sup_{\substack{f\in \mathcal{H}_\alpha, \\ V \in \mathcal{V}_{0}}} P_{f, V}\left\{ \varphi = 1 \right\} + \sup_{\substack{f \in \mathcal{H}_\alpha, \\ V \in \mathcal{V}_{1, \beta}(cn^{-2\alpha})}} P_{f, V}\left\{\varphi = 0\right\} \right\} \geq 1-\eta. 
    \end{equation*}
\end{proposition}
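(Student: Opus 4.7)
The plan is a Le Cam two-point argument based on the following intuition: when $f$ is only $\alpha$-H\"older with $\alpha < \tfrac{1}{4}$, adding an i.i.d.\ random perturbation of size $n^{-\alpha}$ at the design points keeps $f$ in $\mathcal{H}_\alpha(M)$, and such a perturbation contributes exactly the same amount to the marginal variance of each $Y_i$ as a heteroskedastic change in $V$ would. Thus a heteroskedastic $V \in \mathcal{V}_{1,\beta}(c n^{-2\alpha})$ can be disguised as a homoskedastic one by using a randomized rough $f$ to cancel out the local variance changes.

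Take the null prior $\pi_0$ to be the point mass at $(f_0, V_0) = (0,\, 2 \cdot \mathbf{1})$, so that under $\pi_0$ the data satisfy $Y_i \overset{iid}{\sim} N(0,2)$. Fix an integer $k$ with $4(k+1)\alpha > 1$ (possible since $\alpha > 0$) and let $\mu$ be a symmetric probability measure on a bounded interval $[-A, A]$ whose first $2k+1$ moments coincide with those of $N(0,1)$; the $(k+1)$-point Gauss--Hermite quadrature measure is one concrete choice. Let $V_1$ be a sine wave of amplitude $\tfrac{1}{2} a^2 n^{-2\alpha}$ around the mean $2 - \tfrac{1}{2} a^2 n^{-2\alpha}$ with half-period $w = (C_w n^{-2\alpha/\beta}) \vee n^{-1}$, for a sufficiently large constant $C_w$ and a small constant $a > 0$ depending on $M$ and $A$. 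A direct derivative bound shows $V_1 \in \mathcal{H}_\beta(M)$; moreover $V_1 \geq 0$ for $n$ large and $\|V_1 - \bar V_1\|_2 \asymp a^2 n^{-2\alpha}$, placing $V_1$ in $\mathcal{V}_{1,\beta}(cn^{-2\alpha})$ for all $c < c_0$ where $c_0$ depends only on $a$. Under $\pi_1$, fix $V = V_1$ and draw $f$ by setting $f(x_i) = \sqrt{2 - V_1(x_i)}\, Z_i$ for $Z_i \overset{iid}{\sim} \mu$, extending $f$ piecewise linearly between design points. The bounded supports of $\mu$ and of $\sqrt{2 - V_1(x_i)} \leq a n^{-\alpha}$ ensure $f \in \mathcal{H}_\alpha(M)$ deterministically.

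By construction the $Y_i$'s under $\pi_1$ are independent, and each has variance $(2 - V_1(x_i)) + V_1(x_i) = 2$ since $\Var Z_i = 1$. The $(2k+1)$-moment match of $\mu$ with $N(0,1)$, combined with the multinomial expansion of the moments of $Y_i = \sqrt{2 - V_1(x_i)}\, Z_i + \sqrt{V_1(x_i)}\,\xi_i$, implies that the marginal $Q_i$ of $Y_i$ under $\pi_1$ agrees with $N(0,2)$ in all moments of order $\leq 2k + 1$; moreover the leading $(2k+2)$-nd moment difference equals $(2 - V_1(x_i))^{k+1} (E_\mu Z^{2k+2} - E_{N(0,1)} \eta^{2k+2}) \lesssim n^{-(2k+2)\alpha}$. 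Expanding $dQ_i / dN(0,2) - 1$ in the probabilist's Hermite basis---which converges in $L^2(N(0,2))$ since $Q_i$ is a convolution of a compactly supported measure with a Gaussian of variance close to $2$---the moment match forces the first $2k+1$ Hermite coefficients to vanish, and the dominant $(2k+2)$-nd coefficient controls
\[
\chi^2\bigl(Q_i,\, N(0,2)\bigr) \lesssim n^{-4(k+1)\alpha}.
\]
Tensorization yields $\chi^2(\pi_1, \pi_0) \leq \exp\bigl(C n^{1 - 4(k+1)\alpha}\bigr) - 1 \to 0$, and Pinsker's inequality combined with Le Cam's lemma deliver $\mathcal{R}(c n^{-2\alpha}) \geq 1 - \eta$ for all $c < c_0$ and $n$ sufficiently large (depending on $\eta, \alpha, \beta$).

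The main technical obstacle is the per-coordinate $\chi^2$ bound. The moment-matching identity is algebraic, but $dQ_i / dN(0,2)$ is not uniformly bounded, so the Hermite analysis requires care: one must use the explicit convolution structure of $Q_i$ to establish $L^2(N(0,2))$-integrability of the density ratio, and then translate the moment match into the claimed decay of the leading Hermite coefficient. The remaining ingredients---Hölder verifications for $f$ and $V_1$, and the reduction to Le Cam---are comparatively routine.
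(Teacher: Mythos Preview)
Your approach is correct and is essentially the same moment-matching construction the paper uses: both arguments pick a bounded measure matching the first several Gaussian moments, use it to randomize $f$ at the design points with amplitude $\asymp n^{-\alpha}$ so that the extra mean-variance mimics a heteroskedastic shift, and then bound the per-coordinate $\chi^2$ via the moment match and tensorize. The only differences are cosmetic---the paper places the prior on $f$ in the null rather than the alternative, takes $V_1$ to be a smooth step rather than a sine wave, and invokes an off-the-shelf $\chi^2$ moment-matching lemma (Theorem~3.3.3 of Wu's polynomial-method notes) in place of your Hermite expansion sketch, which is exactly the content of that lemma.
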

\noindent The argument is, in spirit, the same as that in \cite{wang_effect_2008}. Namely, the problem is reduced to a Bayes testing problem for which the marginal distribution of the data under the null and alternative share a large number of moments. The moment matching property of the two marginal distributions can be shown to imply indistinguishability of the two hypotheses \cite{wu_polynomial_2020}, thus giving a lower bound on the minimax separation rate. The argument from \cite{wang_effect_2008} cannot be directly used as their construction gives homoskedastic observations in both hypotheses, but can be modified appropriately.

Moving on, the \(n^{-\frac{4\beta}{4\beta+1}}\) term reflects the intrinsic difficulty of detecting heteroskedasticity, even in the case where there is no nuisance from the mean, that is, \(f \equiv 0\). The rate \(n^{-\frac{4\beta}{4\beta+1}}\) has appeared in homoskedastic signal detection (i.e testing whether the mean function is identically zero) and in goodness-of-fit testing of smooth densities \cite{ingster_nonparametric_2003,gine_mathematical_2016}. Consider in (\ref{rate:testing}), the term \(n^{-2\beta}\) dominates \(n^{-\frac{4\beta}{4\beta+1}}\) for \(\beta \leq \frac{1}{4}\). Thus, for the purposes of establishing the term \(n^{-\frac{4\beta}{4\beta+1}}\) in lower bound, it can be assumed without loss of generality \(\beta > \frac{1}{4}\).

\begin{proposition}\label{prop:nonparam_lbound}
    Suppose \(\beta > \frac{1}{4}\). If \(\eta \in (0, 1)\), then there exists \(c_\eta > 0\) not depending on \(n\) such that for all \(0 < c < c_\eta\) we have
    \begin{equation*}
        \inf_{\varphi}\left\{ \sup_{\substack{f \in \mathcal{H}_\alpha, \\ V \in \mathcal{V}_{0}}} P_{f, V}\left\{\varphi = 1\right\} + \sup_{\substack{f \in \mathcal{H}_\alpha, \\ V \in \mathcal{V}_{1,\beta}\left(c n^{-\frac{2\beta}{4\beta+1}}\right)}} P_{f, V}\left\{\varphi = 0\right\} \right\} \geq 1-\eta. 
    \end{equation*}
\end{proposition}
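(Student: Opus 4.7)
The approach is the classical Ingster-type reduction to a Bayesian testing problem controlled by the \(\chi^2\) divergence, specialized to the Gaussian noise setting. I take \(f \equiv 0 \in \mathcal{H}_\alpha\) so that the mean does not interfere, and fix \(\sigma^2 = 1\) as the null variance, so \(P_0 = \bigotimes_{i=0}^n N(0,1)\). For the alternative prior, I use the standard bump construction: fix a smooth \(\psi \in \mathcal{H}_\beta\) supported on \([0,1]\) with \(\int_0^1 \psi = 0\) and \(\|\psi\|_\infty\) small, pick bandwidth \(h = c\,n^{-2/(4\beta+1)}\) with \(c>0\) small, set \(m = \lfloor 1/h\rfloor\), and place disjoint translates \(\psi_k(x) = h^\beta \psi((x-(k-1)h)/h)\), \(k=1,\dots,m\). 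Then let
\begin{equation*}
    V_\epsilon(x) = 1 + \sum_{k=1}^m \epsilon_k \psi_k(x), \qquad \epsilon_k \overset{iid}{\sim} \Rademacher.
\end{equation*}
Since \(h^\beta \to 0\), we have \(V_\epsilon \geq 1/2 > 0\); the disjointness of supports together with the standard rescaling of \(\psi\) yields \(V_\epsilon \in \mathcal{H}_\beta(M)\) for \(M\) sufficiently large; and \(\bar V_\epsilon = 1\) gives
\begin{equation*}
    \|V_\epsilon - \bar{V}_\epsilon \mathbf{1}\|_2^2 = \sum_{k=1}^m \|\psi_k\|_2^2 = m h^{2\beta+1}\|\psi\|_2^2 \asymp h^{2\beta} \asymp n^{-4\beta/(4\beta+1)},
\end{equation*}
so the prior is supported on \(\mathcal{V}_{1,\beta}(c_0\, n^{-2\beta/(4\beta+1)})\) with \(c_0 \gtrsim c^\beta\).

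The key computation is bounding \(\chi^2(P_1, P_0)\) where \(P_1 = E_\epsilon \bigotimes_i N(0, V_\epsilon(x_i))\). Writing \(\phi_a\) for the \(N(0,a)\) density, the elementary Gaussian identity \(\int_\R \phi_a(y)\phi_b(y)/\phi_1(y)\,dy = (1-(a-1)(b-1))^{-1/2}\) together with Ingster's formula gives
\begin{equation*}
    \chi^2(P_1, P_0) + 1 = E_{\epsilon, \epsilon'} \prod_{i=0}^n \bigl(1 - u_i(\epsilon)\, u_i(\epsilon')\bigr)^{-1/2}, \qquad u_i(\epsilon) = V_\epsilon(x_i) - 1.
\end{equation*}
Because the supports are disjoint, \(u_i(\epsilon) = \epsilon_{k(i)} \psi_{k(i)}(x_i)\) for the unique \(k(i)\) with \(x_i \in I_{k(i)}\). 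Applying the Taylor bound \(-\tfrac12\log(1-x) \leq \tfrac{x}{2} + x^2\) for \(|x| \leq 1/2\), the product is dominated by \(\exp\bigl(\tfrac12 \sum_i u_i(\epsilon) u_i(\epsilon') + \sum_i u_i(\epsilon)^2 u_i(\epsilon')^2\bigr)\). The quadratic remainder is deterministic and of order \(n h^{4\beta} \|\psi\|_4^4 \asymp n^{(1-4\beta)/(4\beta+1)}\), which vanishes precisely because \(\beta > 1/4\). Taking the Rademacher expectation of the linear part,
\begin{equation*}
    E_{\epsilon,\epsilon'} \exp\Bigl(\tfrac12 \sum_{k=1}^m \epsilon_k \epsilon'_k S_k\Bigr) = \prod_{k=1}^m \cosh(S_k/2) \leq \exp\Bigl(\tfrac18 \sum_k S_k^2\Bigr),
\end{equation*}
where \(S_k = \sum_i \psi_k(x_i)^2 \asymp n h^{2\beta+1}\) by Riemann approximation (valid since \(nh \to \infty\) under \(\beta > 1/4\)). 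Hence \(\sum_k S_k^2 \asymp m \cdot n^2 h^{4\beta+2} = n^2 h^{4\beta+1} = c^{4\beta+1}\), and \(\chi^2(P_1, P_0)\) can be made arbitrarily small by choosing \(c\) small.

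The proposition then follows from the standard Le Cam reduction:
\begin{equation*}
    \inf_\varphi\!\Bigl\{\sup_{\mathcal{V}_0} P_{0,V}(\varphi = 1) + \sup_{\mathcal{V}_{1,\beta}(\varepsilon)} P_{0,V}(\varphi = 0)\Bigr\} \geq 1 - d_{TV}(P_0, P_1) \geq 1 - \tfrac12\sqrt{\chi^2(P_1, P_0)},
\end{equation*}
which exceeds \(1 - \eta\) for \(c\) sufficiently small, with \(\varepsilon = c_0\, n^{-2\beta/(4\beta+1)}\). The main obstacle I anticipate is the bookkeeping of the Taylor remainder \(\sum_i u_i^2 u_i'^2 \asymp n h^{4\beta}\): it must be shown to stay bounded (in fact \(o(1)\)), and this is exactly the place where \(\beta > 1/4\) enters the argument. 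Secondary technical care is needed in verifying the Hölder regularity of \(V_\epsilon\) with a fixed constant \(M\) uniformly over \(\epsilon\) (routine via disjoint supports and the scaling \(\psi_k^{(j)} \asymp h^{\beta-j}\)) and in replacing the sums \(S_k\) by their integral approximations uniformly in \(k\), for which smoothness of \(\psi\) and the regular grid suffice.
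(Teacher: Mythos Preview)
Your proposal is correct and follows essentially the same approach as the paper: a Rademacher-mixed bump prior on \(V\) followed by a \(\chi^2\) bound via the Gaussian scale-mixture identity \(\int \phi_a\phi_b/\phi_1 = (1-(a-1)(b-1))^{-1/2}\), then \(\cosh\) control of the Rademacher average. The paper organizes the computation by first tensorizing into \(m\) independent blocks and working with the exact MGF of \(\chi^2_1\), whereas you apply Ingster's formula globally and separate the linear and quadratic parts of \(-\tfrac12\log(1-x)\); these are equivalent routes to the same bound, and your explicit tracking of the quadratic remainder \(\sum_i u_i^2(u_i')^2 \asymp n h^{4\beta}\) makes transparent one place where \(\beta>\tfrac14\) enters.
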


\noindent The prior construction in the lower bound is standard \cite{ingster_nonparametric_2003,arias-castro_remember_2018}, though the calculations for bounding the resulting \(\chi^2\)-divergence are somewhat different from existing calculations due to the heteroskedasticity, but these differences are largely technical. 

Next, the \(n^{-2\beta}\) term is attributable to the discretization of \([0, 1]\) by the fixed design points in (\ref{model}). To elaborate, only responses at the design points are available, and so \(n^{-2\beta}\) is due to approximating the smooth \(V\) by the points \(\{V(x_i)\}_{i=1}^{n}\). If \(V\) is constant at the design points but nonconstant between them, it should not be possible to detect the heteroskedasticity when \(\beta\) is small. This discretization effect is well-known in other problems \cite{cai_optimal_2011,devore_constructive_1993}. Without loss of generality, it can be assumed \(\beta \leq \frac{1}{4}\) as it is only in this regime \(n^{-2\beta}\) dominates \(n^{-\frac{4\beta}{4\beta+1}}\) in (\ref{rate:testing}).

\begin{proposition}\label{prop:fixed_design_lbound}
    Suppose \(0 < \beta \leq \frac{1}{4}\). If \(\eta \in (0, 1)\), then there exists \(c_\eta > 0\) not depending on \(n\) such that for all \(0 < c < c_\eta\) we have 
    \begin{equation*}
        \inf_{\varphi}\left\{ \sup_{\substack{f \in \mathcal{H}_\alpha, \\ V \in \mathcal{V}_0}} P_{f,V}\left\{\varphi = 1\right\} + \sup_{\substack{f \in \mathcal{H}_\alpha, \\ V \in \mathcal{V}_{1, \beta}(cn^{-\beta})}} P_{f,V}\left\{\varphi = 0\right\} \right\} \geq 1-\eta. 
    \end{equation*}
\end{proposition}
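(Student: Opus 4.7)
The plan is to exhibit a single ``worst-case'' pair of parameters: one under $H_0$ and one under $H_1$ that induce \emph{identical} distributions on $(Y_0, \ldots, Y_n)$. Once that is achieved, the two-sum lower bound is trivially at least $1$, and in particular at least $1 - \eta$. The idea is that the observations depend on $V$ only through its values at the design points, so an alternative $V$ whose graph is pinned to a constant at every $x_i$ but oscillates in between is indistinguishable from a truly constant variance, while still being separated in $L^2$. This is where the rate $n^{-\beta}$ comes from: the H\"older constraint forces the amplitude of inter-design-point oscillations to be at most of order $n^{-\beta}$.

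Concretely, I would take $f \equiv 0$ in both hypotheses, fix a nonconstant smooth profile $\psi : [0,1] \to \R$ with $\psi(0) = \psi(1) = 0$ (so the periodic extension $\phi(t) = \psi(t - \lfloor t \rfloor)$ is continuous on $\R$), and set
\[
    V_0 \equiv \sigma^2, \qquad V_1(x) = \sigma^2 + c' n^{-\beta}\, \phi(nx),
\]
for a small constant $c' > 0$ and an appropriate $\sigma^2 > 0$. Since $\phi$ vanishes at every integer, $V_1(i/n) = \sigma^2 = V_0(i/n)$ for all $i$, so under the Gaussian noise assumption the joint law of $(Y_0, \ldots, Y_n)$ is $N(0, \sigma^2)^{\otimes (n+1)}$ under both $(f \equiv 0, V_0)$ and $(f \equiv 0, V_1)$. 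Any test $\varphi$ therefore satisfies $P_{0,V_0}(\varphi = 1) + P_{0,V_1}(\varphi = 0) = 1$, which is $\geq 1 - \eta$.

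The verification that $V_1$ is admissible is the one piece that requires genuine work, though it is routine. I need $V_1 \in \mathcal{H}_\beta(M)$, $V_1 \geq 0$, and $\|V_1 - \bar{V}_1 \mathbf{1}\|_2 \geq c n^{-\beta}$. Nonnegativity follows for small $c'$ by taking $\sigma^2 \geq c' \|\psi\|_\infty n^{-\beta}$, which is automatic once $c'$ is chosen small and $\sigma^2$ is a fixed positive constant. For the H\"older bound, the key point is that $\phi$ is Lipschitz on $\R$ (comparing $\phi(s)$ and $\phi(t)$ within a unit interval uses $\|\psi'\|_\infty$, and across a boundary one splits at the integer where $\phi$ vanishes), so $V_1 - \sigma^2$ is Lipschitz with constant $O(c' n^{1-\beta})$. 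Interpolating this Lipschitz bound with the sup bound $2 c' \|\psi\|_\infty n^{-\beta}$ yields a $\beta$-H\"older constant of order $c'$, which can be made $\leq M$ by choosing $c'$ small. For the separation, a direct change of variables using periodicity of $\phi$ with period $1$ gives
\[
    \|V_1 - \bar{V}_1 \mathbf{1}\|_2^2 = (c')^2 n^{-2\beta} \,\|\psi - \bar{\psi}\mathbf{1}\|_2^2,
\]
where $\bar{\psi} = \int_0^1 \psi$. Since $\psi$ is nonconstant, the right-hand side is of exact order $n^{-2\beta}$, so $V_1 \in \mathcal{V}_{1,\beta}(c n^{-\beta})$ for a suitable $c = c(c', \psi) > 0$, and the conclusion follows.

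The main ``obstacle'' is conceptually minor: it is purely the careful verification that the pinned oscillation $\phi(nx)$ produces a bona fide element of $\mathcal{H}_\beta$ with constant at most $M$ while still contributing $\Theta(n^{-\beta})$ to the $L^2$ separation. There is no need for a mixture prior, a $\chi^2$-bound, or any moment-matching argument here, because the indistinguishability is exact rather than approximate; this is the essential difference between the discretization-driven term $n^{-2\beta}$ and the nonparametric term $n^{-4\beta/(4\beta+1)}$ handled in Proposition \ref{prop:nonparam_lbound}.
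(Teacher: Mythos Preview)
Your proposal is correct and follows essentially the same two-point argument as the paper: both exploit that the data depend on $V$ only at the design points, and construct a single alternative $V_1$ equal to a constant at every $x_i$ while oscillating in between (the paper uses an explicit piecewise-linear spike with amplitude $\sqrt{3}cn^{-\beta}$, you use a generic smooth $\psi$ rescaled to $c'n^{-\beta}\phi(nx)$), so that the null and alternative data laws coincide exactly. The H\"older verification via the $\min(\text{Lipschitz},\text{sup})$ interpolation and the $L^2$ separation via periodicity are both fine, and as you note no $\chi^2$ or prior construction is needed here.
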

The lower bound argument proceeds by the two-point testing technique (also known as Le Cam's two-point method). The two-point construction makes essential use of the fixed design nature of (\ref{model}). Taking \(f \equiv 0\) in both null and alternative hypotheses, two variance functions \(V_0\) and \(V_1\) are constructed. Specifically, \(V_0 \equiv \mathbf{1}\) is homoskedastic and \(V_1\) is a \(\beta\)-H\"{o}lder smooth function which is equal to one on the design points and ``spiky" between (see Figure \ref{fig:fixed_design_lbound} in the supplementary material) such that \(||V_1 - \bar{V}_1\mathbf{1}||^2 \gtrsim n^{-2\beta}\). Since observations from (\ref{model}) occur only at the design points, the marginal distribution of the data is the same under both null and alternative hypotheses. Thus, the heteroskedasticity from \(V_1\) cannot be detected.

\section{Arbitrary variance profile}\label{section:profile}
As discussed in Section \ref{section:intro}, the existing literature on heteroskedasticity testing might be criticized for only being able to test for particular specifications of the heteroskedasticity. Though other work (e.g. \cite{white_heteroskedasticity-consistent_1980}) may impose no such assumptions on the variance, they often involve restrictive conditions on the regression function. The important case of an arbitrary variance profile with a nonparametric regression function has not yet been addressed in the literature.

The statistician who imposes no smoothness assumptions on the variance function necessarily is not concerned with the behavior of \(V\) between design points. The goal is to detect heteroskedasticity among the design points alone. To notationally distinguish this setting, the model will be written as 
\begin{equation}\label{model:profile}
    Y_i = f(x_i) + V_i^{1/2} \xi_i
\end{equation}
for \(i = 1,...,n\) where \(x_i = \frac{i}{n}\) are fixed design points in the unit interval and \(\xi_1,...,\xi_n \overset{iid}{\sim} N(0, 1)\) denote noise variables. Denote the variance profile by \(V = (V_1,...,V_n) \in [0, \infty)^{n}\). For the variance profile, define the parameter spaces 
\begin{align*}
    \Sigma_0 &= \left\{ V \in \left[0, M\right]^{n} : V_i = \sigma^2 \text{ for all } i = 1,...,n \text{ for some } \sigma^2 \in [0, M]\right\}, \\
    \Sigma_1(\varepsilon) &= \left\{ V \in \left[0, M\right]^{n} : \sqrt{\frac{1}{n} \sum_{i=1}^{n} \left(V_i - \bar{V}_n\right)^2} \geq \varepsilon \right\},
\end{align*}
where \(\bar{V}_n = \frac{1}{n}\sum_{i=1}^{n} V_i\). Throughout, \(M\) is assumed to be a sufficiently large constant. Note that the set of variance profiles in \(\Sigma_1\) do not obey a H\"{o}lder-type smoothness condition; in fact, nothing except boundedness is assumed. Formally, the heteroskedasticity detection problem is testing the hypotheses 
\begin{align}
    H_0&: V \in \Sigma_0 \text{ and } f \in \mathcal{H}_\alpha, \label{problem:var0_profile} \\
    H_1&: V \in \Sigma_1(\varepsilon) \text{ and } f \in \mathcal{H}_\alpha. \label{problem:var1_profile}
\end{align}

\noindent It will be shown
\begin{equation}\label{rate:betazero}
    \varepsilon^*(\alpha)^2 \asymp n^{-4\alpha} + n^{-1/2}.
\end{equation}
Notably, the minimax separation rate does not follow simply as a consequence of taking \(\beta = 0\) in (\ref{rate:testing}) for the problem (\ref{problem:var0})-(\ref{problem:var1}). The key difference in (\ref{rate:testing}) is that \(V\) is treated as a full, honest function with the entire unit interval as its domain. The \(L^2\) norm is used to define the separation between the null and alternative hypotheses in (\ref{problem:var0})-(\ref{problem:var1}). The behavior of \(V\) between the design points is important. In contrast, the problem (\ref{problem:var0_profile})-(\ref{problem:var1_profile}) only concerns \(V\) exclusively at the design points. This difference is conceptually important; rates cannot simply be ``exchanged" between the two settings.

\subsection{Necessity of noise information}\label{section:profile_triviality}
The noise variables in (\ref{model:profile}) are assumed to be distributed according to the standard Gaussian distribution, whereas only the weak assumption of a finite fourth moment is imposed in (\ref{model}). As will be shown in this section, the heteroskedasticity testing problem turns out to be trivial (i.e. consistent testing is impossible) if the noise distribution is totally unknown because there are no smoothness conditions on \(V\).

To show the triviality, suppose the distribution \(P_\xi\) of the noise \(\xi_i\) satisfies \(E(\xi_i^2) = 1\) and \(E(\xi_i^4) \leq C_\xi\) for some large universal constant \(C_\xi < \infty\) but is otherwise unknown; denote by \(\Xi\) the set of such noise distributions. From a minimax perspective, ignorance is incorporated in the testing risk by taking supremum over \(P_\xi \in \Xi\). Concretely, the problem is trivial if and only if the minimax separation rate is at least constant order, that is \(\varepsilon^* \gtrsim 1\). 

To show the lower bound, a Bayes testing problem is constructed. Consider the problem of testing
\begin{align*}
    H_0 &: f \equiv 0, V_i = \frac{M+1}{2}, \text{ and } \xi_i \overset{iid}{\sim} \sqrt{\frac{2}{1+M}} \cdot \left(\frac{1}{2} N(0, 1) + \frac{1}{2}N(0, M)\right), \\
    H_1 &: f \equiv 0, V_i \overset{iid}{\sim} \frac{1}{2}\delta_1 + \frac{1}{2} \delta_{M}, \text{ and } \xi_i \overset{iid}{\sim} N(0, 1).
\end{align*}
Note \(E(\xi_i^2) = 1\) and \(E(\xi_i^4) \lesssim 1\) under both hypotheses. Observe \(V\) is constant under \(H_0\) and \(\frac{1}{n} \sum_{i=1}^{n} \left(V_i - \bar{V}_n\right)^2 \gtrsim 1\) with high probability under \(H_1\). Examining the marginal distribution of the data, since \(V_i^{1/2}\xi_i \sim \frac{1}{2}N(0, 1) + \frac{1}{2}N(0, M)\) under either \(H_0\) or \(H_1\), we have 
\begin{align*}
    H_0 &: Y_1,...,Y_n \overset{iid}{\sim} \frac{1}{2} N(0, 1) + \frac{1}{2}N(0, M), \\
    H_1 &: Y_1,...,Y_n \overset{iid}{\sim} \frac{1}{2} N(0, 1) + \frac{1}{2} N(0, M),
\end{align*}
and so the hypotheses are indistinguishable. Therefore, it follows \(\varepsilon^* \gtrsim 1\) as claimed. The following result summarizes this derivation and a formal proof is given in the supplementary material.
\begin{proposition}\label{prop:fourth_moment_info}
    Suppose \(\alpha > 0\). If \(\eta \in (0, 1)\), then there exists \(c_\eta > 0\) not depending on \(n\) such that for all \(0 < c < c_\eta\) and \(n\) sufficiently large depending only on \(\eta\), we have
    \begin{equation*}
        \inf_{\varphi}\left\{\sup_{\substack{f \in \mathcal{H}_\alpha, \\ V \in \Sigma_0, \\ P_\xi \in \Xi}} P_{f, V, P_\xi}\left\{\varphi = 1\right\} + \sup_{\substack{f \in \mathcal{H}_\alpha, \\ V \in \Sigma_1(c), \\ P_\xi \in \Xi}} P_{f, V, P_\xi}\left\{\varphi = 0\right\} \right\} \geq 1-\eta. 
    \end{equation*}
\end{proposition}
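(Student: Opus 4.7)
\textbf{Proof proposal for Proposition \ref{prop:fourth_moment_info}.} The plan is to formalize the Bayes-prior construction sketched immediately before the statement and reduce the lower bound to a total-variation computation between the marginals induced on the data. Let \(\pi_0\) be the prior concentrating on \(f \equiv 0\), \(V_i = (M+1)/2\) for every \(i\), and noise distribution \(P_\xi^{(0)} := \sqrt{2/(M+1)}\cdot\bigl(\tfrac{1}{2}N(0,1) + \tfrac{1}{2}N(0,M)\bigr)\) (understood as the law of \(\sqrt{2/(M+1)}\,Z\) with \(Z\) drawn from the mixture). Let \(\pi_1\) be the prior that sets \(f \equiv 0\), takes \(\xi_i \overset{iid}{\sim} N(0,1)\), and draws \(V_1, \dots, V_n \overset{iid}{\sim} \tfrac{1}{2}\delta_1 + \tfrac{1}{2}\delta_M\). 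First I would verify that both \(P_\xi^{(0)}\) and \(N(0,1)\) lie in \(\Xi\): each has mean zero, each has (after the rescaling in the first case) unit variance, and each has a finite fourth moment bounded by a constant depending only on \(M\).

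Next, I would compute the marginal distribution of \((Y_1, \dots, Y_n)\) under each prior. Under \(\pi_0\), we have \(Y_i = \sqrt{(M+1)/2}\,\xi_i\) with \(\xi_i \sim P_\xi^{(0)}\), which by construction has law \(\tfrac{1}{2}N(0,1) + \tfrac{1}{2}N(0,M)\). Under \(\pi_1\), conditioning on \(V_i\) gives \(Y_i \mid V_i \sim N(0, V_i)\), so marginalizing over \(V_i \sim \tfrac{1}{2}\delta_1 + \tfrac{1}{2}\delta_M\) yields again \(\tfrac{1}{2}N(0,1) + \tfrac{1}{2}N(0,M)\). Since the coordinates are independent under both priors, the two joint marginal laws on \(\R^n\) coincide exactly, and therefore \(d_{TV}(P_{\pi_0}, P_{\pi_1}) = 0\).

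It remains to argue that \(\pi_1\) concentrates on the alternative \(\Sigma_1(c)\). Since \(V_i\) takes values in the bounded set \(\{1, M\}\), a Hoeffding bound controls the deviations of \(\bar V_n\) from \((M+1)/2\) and of \(\frac{1}{n}\sum_i V_i^2\) from \((1+M^2)/2\); chaining these, \(\frac{1}{n}\sum_{i=1}^{n}(V_i - \bar V_n)^2\) lies within \((M-1)^2/8\) of \(\Var(V_1) = (M-1)^2/4\) except with probability at most \(\eta/2\), provided \(n\) is sufficiently large in terms of \(\eta\). Hence for any \(c \leq (M-1)/4\), we have \(\pi_1(\Sigma_1(c)^c) \leq \eta/2\). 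The standard Bayes-to-minimax reduction then gives
\[
\inf_\varphi\Bigl\{\sup_{H_0} P_{f,V,P_\xi}\{\varphi = 1\} + \sup_{H_1} P_{f,V,P_\xi}\{\varphi = 0\}\Bigr\} \;\geq\; 1 - d_{TV}(P_{\pi_0}, P_{\pi_1}) - \pi_1(\Sigma_1(c)^c) \;\geq\; 1 - \eta,
\]
taking \(c_\eta := (M-1)/4\). The main obstacle, though modest, is the concentration step above; it is handled by routine sub-Gaussian tail bounds for bounded iid sums and only requires \(n\) to exceed a threshold depending on \(\eta\) and \(M\). Everything else in the argument is exact: the two prior-induced marginals are literally the same product measure, which is what makes the \textbf{total} ignorance of the noise distribution so damaging.
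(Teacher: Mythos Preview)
Your proposal is correct and follows essentially the same argument as the paper: both construct the pair of priors with the scale-mixture noise under the null and the random \(\{1,M\}\)-valued variance profile with Gaussian noise under the alternative, observe that the data marginals coincide exactly, and handle the event that the empirical variance of \(V\) falls below the required threshold by a concentration argument. The only cosmetic differences are that the paper uses Chebyshev and an explicit conditioning of the prior on the high-probability event (taking \(c_\eta = 1\)), whereas you invoke a Hoeffding-type bound and absorb the leakage directly into the Bayes-to-minimax inequality with \(c_\eta = (M-1)/4\); both routes are equivalent.
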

Note the prior on \(V\) in \(H_1\) does not yield a variance profile satisfying a H\"{o}lder-type smoothness condition as in (\ref{space:V1}). Intuitively, it is the lack of a smoothness constraint which results in the triviality. Without smoothness, the statistician cannot distinguish whether variability in the responses is due to heteroskedasticity or simply a fatter noise distribution. If smoothness constraints on \(V\) were imposed, there is hope to detect heteroskedasticity since an independent sequence of squared variates from \(P_\xi\) will not satisfy the H\"{o}lder condition whereas \(V\) does.

\subsection{Methodology}\label{section:profile_methodology}
Rather than estimating the quadratic functional \(\frac{1}{n} \sum_{i=1}^{n} \left(V_i - \bar{V}_n\right)^2\), a proxy will be considered instead. However, the proxy is more complicated than (\ref{def: T}) since it is no longer assumed \(V \in \mathcal{H}_\beta\); there is no smoothness of which to take advantage. To elaborate, consider the statistic (\ref{def: testing_statistic beta>1/4}) used to estimate the proxy (\ref{def: T}) is purely a function of the squared first-order differences \(R_i^2 = (Y_{i+1} - Y_i)^2\). In fact, all the tests described in Section \ref{section:related_work} use only the first-order differences as well. However, when \(V\) is not assumed to be smooth, it does not suffice to only use the \(\{R_i^2\}_{i=1}^{n-1}\). To illustrate, consider even \(n\) and the case where \(f \equiv 0\) and the variance profile is \(V = (1, 3, 1, 3, 1, 3, ..., 1, 3) \in \R^n\). Note since \(V\) is not smooth, the variances of neighboring points can to be quite different from one another as they are here. Clearly there is much heteroskedasticity as \(\frac{1}{n}\sum_{i=1}^{n} (V_i - \bar{V}_n)^2 \gtrsim 1\), and observe the squared first-order differences all have the same marginal distributions \(R_i^2 \sim 4 \cdot \chi^2_1\). However, observe they will still have the same distributions if the variance profile were actually homoskedastic \(V = (2, 2, 2, 2, ..., 2) \in \R^n\). Therefore, one cannot exclusively use \(\{R_i^2\}_{i=1}^{n-1}\) in order to test heteroskedasticity when there are no smoothness assumptions on \(V\). Thus, the methodology of \cite{dette_consistent_2002,dette_testing_1998,wang_effect_2008} and the statistic (\ref{def: testing_statistic beta>1/4}) fail to address the case of an arbitrary variance profile. Consequently, we consider a more complicated construction.

For \(1 \leq i \leq n-1\), define \(W_i = V_{i+1} + V_{i}\) and \(\delta_i = f(x_{i+1}) - f(x_i)\). Further, let \(\bar{W}_n\) and \(\overline{\delta^2_n}\) denote the respective averages. For \(1 \leq i \leq n-2\), define \(\tilde{W}_i = V_{i+2} + V_{i}\) and \(\tilde{\delta}_i = f(x_{i+2}) - f(x_i)\).  We will consider the following population level quantities
\begin{align}
    T &= \frac{1}{n}\sum_{i=1}^{n-1} \left(W_i + \delta_i^2 - \bar{W}_n - \overline{\delta^2_n}\right)^2, \label{def:T_betazero} \\
    \tilde{T}_1 &= \frac{1}{n} \sum_{i=1}^{n-3} \left(W_{i+1} + \delta_{i+1}^2 - V_i - V_{i+3} - (f(x_{i+3}) - f(x_i))^2 \right)^2 \label{def:T1_tilde_betazero},\\
    \tilde{T}_2 &= \frac{1}{n} \sum_{i=1}^{n-3} \left(\tilde{W}_{i+1} + \tilde{\delta}_{i+1}^2 - \tilde{W}_i - \tilde{\delta}_{i}^2\right)^2 \label{def:T2_tilde_betazero}.
\end{align}
\noindent The following result establishes that \(T, \tilde{T}_1,\) and \(\tilde{T}_2\) contain some signal to detect heteroskedasticity.
\begin{proposition}\label{prop:signal}
    If \(V \in \left[0, M\right]^{n}\) and \(f \in \mathcal{H}_\alpha\), then 
    \begin{equation*}
        \frac{1}{n} \sum_{i=1}^{n} \left(V_i - \bar{V}_n\right)^2 \lesssim n^{-1} +  n^{-4(\alpha\wedge 1)} + T + \tilde{T}_1 + \tilde{T}_2
    \end{equation*}
    where \(T, \tilde{T}_1,\) and \(\tilde{T}_2\) are given by (\ref{def:T_betazero}), (\ref{def:T1_tilde_betazero}), and (\ref{def:T2_tilde_betazero}) respectively. 
\end{proposition}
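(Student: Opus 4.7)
The plan is to prove the bound via an iterative decomposition exploiting both the algebraic identity $2V_i = W_i + \tilde{W}_i - W_{i+1}$ (valid for $i \in \{1, \dots, n-2\}$, as foreshadowed in the introduction) and a parallelogram-style use of $\tilde{T}_1$ and $\tilde{T}_2$. Each of $T, \tilde{T}_1, \tilde{T}_2$ is translation-invariant in $V$, so I may assume $\bar{V}_n = 0$, reducing the goal to bounding $\frac{1}{n}\sum V_i^2$. This makes $\bar{W}_n$ and $\bar{\tilde{W}}_n$ telescope to $O(n^{-1})$, which together with the uniform bounds $\delta_i^2,\, \tilde{\delta}_i^2,\, (f(x_{i+3}) - f(x_i))^2 \lesssim n^{-2(\alpha\wedge 1)}$ coming from $f \in \mathcal{H}_\alpha$ account for the $n^{-1} + n^{-4(\alpha\wedge 1)}$ error terms in the final estimate.

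\textbf{Step 1 (controlling first differences).} Direct expansion gives $W_{i+1} - V_i - V_{i+3} = \gamma_i - \gamma_{i+2}$ and $\tilde{W}_{i+1} - \tilde{W}_i = \gamma_i + \gamma_{i+2}$, where $\gamma_j := V_{j+1} - V_j$. Up to $f$-errors absorbed into $n^{-4(\alpha\wedge 1)}$ upon squaring, $\tilde{T}_1$ and $\tilde{T}_2$ are thus empirical second moments of $\gamma_i - \gamma_{i+2}$ and $\gamma_i + \gamma_{i+2}$, respectively. The parallelogram identity $(\gamma_i - \gamma_{i+2})^2 + (\gamma_i + \gamma_{i+2})^2 = 2(\gamma_i^2 + \gamma_{i+2}^2)$, summed over $i$, recovers each $\gamma_j$ with multiplicity at least one, yielding
\[
\frac{1}{n}\sum_{j=1}^{n-1}\gamma_j^2 \;\lesssim\; \tilde{T}_1 + \tilde{T}_2 + n^{-4(\alpha\wedge 1)}.
\]

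\textbf{Step 2 (main decomposition).} Substituting $W_i = U_i - \delta_i^2$ and $\tilde{W}_i = \tilde{U}_i - \tilde{\delta}_i^2$ into $2V_i = W_i + \tilde{W}_i - W_{i+1}$ and centering by the sample means yields
\[
2V_i = (U_i - \bar{U}_n) - (U_{i+1} - \bar{U}_n) + (\tilde{U}_i - \bar{\tilde{U}}_n) + O(n^{-2(\alpha\wedge 1)}) + O(n^{-1}).
\]
Squaring, applying $(a+b+c+d+e)^2 \lesssim a^2+b^2+c^2+d^2+e^2$, averaging over $i = 1, \dots, n-2$, and bounding the two boundary indices $i \in \{n-1, n\}$ via $V_i \leq M$ gives
\[
\frac{1}{n}\sum_{i=1}^{n} V_i^2 \;\lesssim\; T + \frac{1}{n}\sum_{i=1}^{n-2}\tilde{t}_i^2 + n^{-4(\alpha\wedge 1)} + n^{-1},
\]
where $\tilde{t}_i := \tilde{U}_i - \bar{\tilde{U}}_n \approx V_i + V_{i+2}$. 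Next, using $V_i + V_{i+2} = (V_i + V_{i+1}) + \gamma_{i+1}$ and the fact that $V_i + V_{i+1}$ equals (up to $f$-error) the $T$-summand, I obtain $\frac{1}{n}\sum \tilde{t}_i^2 \lesssim T + \frac{1}{n}\sum_j \gamma_j^2 + n^{-4(\alpha\wedge 1)} + n^{-1}$, and Step 1 closes the chain to deliver the proposition.

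\textbf{Main obstacle.} The principal subtlety is that $\tilde{T}_2$, being a sum of squared adjacent differences of $\tilde{U}$, does not by itself control the variance of $\tilde{U}$: an almost-linear sequence has tiny adjacent differences but arbitrarily large variance. This failure is exactly what forces the introduction of $\tilde{T}_1$, since the pair $(\tilde{T}_1, \tilde{T}_2)$ controls, via the parallelogram identity, the first differences $\gamma_j$ of $V$ itself rather than those of $\tilde{U}$. The controlled $\gamma_j$'s then bridge $V_i + V_{i+2}$ (which essentially is $\tilde{t}_i$) to the $T$-summand $V_i + V_{i+1}$, which is exactly the missing link for the identity-based decomposition. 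The remaining bookkeeping (the two indices outside $\{1, \dots, n-2\}$ and the $O(n^{-1})$ gaps $\bar{W}_n - 2\bar{V}_n, \bar{\tilde{W}}_n - 2\bar{V}_n$ from endpoint boundary effects) contributes only $O(n^{-1})$ and is absorbed into the stated bound.
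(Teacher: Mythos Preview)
Your proof is correct and shares the same core strategy as the paper's: both arguments establish that the first differences $\gamma_j = V_{j+1} - V_j$ satisfy $\frac{1}{n}\sum_j \gamma_j^2 \lesssim \tilde{T}_1 + \tilde{T}_2 + n^{-4(\alpha\wedge 1)}$, and both then use this to close the gap between $T$ and $\frac{1}{n}\sum_i (V_i - \bar{V}_n)^2$.

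There are two organizational differences worth noting. Your Step~1 extracts $\sum_j \gamma_j^2$ via the parallelogram identity applied to $\gamma_i \pm \gamma_{i+2}$, which is cleaner than the paper's route (the paper instead writes $2(V_i - V_{i+1})$ as a sum of the second difference of $W$ and the first difference of $\tilde{W}$, then identifies these with the $\tilde{T}_1$- and $\tilde{T}_2$-summands after algebraic simplification). On the other hand, your Step~2 takes a detour: you invoke the full identity $2V_i = W_i + \tilde{W}_i - W_{i+1}$, which produces the intermediate term $\frac{1}{n}\sum_i \tilde{t}_i^2$ that you must then reduce back to $T + \frac{1}{n}\sum_j \gamma_j^2$. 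The paper's reduction is more direct: it simply uses $2V_i - W_i = -\gamma_i$ to get $\frac{1}{n}\sum_i (V_i - \bar{V}_n)^2 \lesssim T + \frac{1}{n}\sum_j \gamma_j^2 + n^{-1} + n^{-4(\alpha\wedge 1)}$ in one step, avoiding the $\tilde{t}_i$ bookkeeping entirely. In short, your Step~1 streamlines the paper's argument, while your Step~2 is slightly more circuitous than necessary; the two cancel out to yield proofs of comparable length and the same substance.
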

\begin{proof}
    The useful inequality \((a + b)^2 \lesssim a^2 + b^2\) will be frequently employed. By direct calculation and since \(V_i \lesssim 1\), we have
    \begin{align*}
        \frac{1}{n} \sum_{i=1}^{n} \left(V_i - \bar{V}_n\right)^2 &\lesssim \frac{1}{n} + \frac{1}{n}\sum_{i=1}^{n-1} \left(2V_i - 2\bar{V}_n\right)^2 \\
        &\asymp n^{-1} +  \frac{1}{n}\sum_{i=1}^{n-1} \left(2V_i + W_i - W_i + \delta_i^2 - \delta_i^2 + \overline{\delta^2_n} - \bar{W}_n + \bar{W}_n - 2\bar{V}_n - \overline{\delta^2_n}\right)^2 \\
        &\lesssim n^{-1} + n^{-4(\alpha \wedge 1)} + \left(\bar{W}_n - 2\bar{V}_n\right)^2 + \frac{1}{n} \sum_{i=1}^{n-1} \left(2V_i - W_i\right)^2 + \frac{1}{n}\sum_{i=1}^{n-1} \left(W_i + \delta_i^2 - \bar{W}_n - \overline{\delta^2_n}\right)^2 \\
        &\lesssim n^{-1} + n^{-4(\alpha \wedge 1)} + n^{-2} + \frac{1}{n} \sum_{i=1}^{n-1} \left(V_i - V_{i+1}\right)^2 + T \\
        &\lesssim n^{-1} + n^{-4(\alpha \wedge 1)} + \frac{1}{n} \sum_{i=1}^{n-3} \left(V_i - V_{i+1}\right)^2 + T
    \end{align*}
    where we have used \(\delta_i^2, \overline{\delta^2_n} \lesssim n^{-2(\alpha \wedge 1)}\). Let us now examine \(\frac{1}{n} \sum_{i=1}^{n-3} \left(V_i - V_{i+1}\right)^2\). Consider that for \(1 \leq i \leq n-3\),
    \begin{align*}
        V_i &= \frac{W_i + \tilde{W}_i - W_{i+1}}{2}, \\
        V_{i+1} &= \frac{W_{i+1} + \tilde{W}_{i+1} - W_{i+2}}{2}.
    \end{align*}
    Therefore, 
    \begin{align*}
        \frac{1}{n} \sum_{i=1}^{n-3} \left(V_i - V_{i+1}\right)^2 &\lesssim \frac{1}{n} \sum_{i=1}^{n-3} \left(\left(W_{i+1} - W_i\right) - (W_{i+2} - W_{i+1})\right)^2 + \frac{1}{n} \sum_{i=1}^{n-3} \left(\tilde{W}_{i+1} - \tilde{W}_i\right)^2 \\
        &\lesssim n^{-4(\alpha \wedge 1)} + \frac{1}{n} \sum_{i=1}^{n-3} \left(V_{i+1} + V_{i+2} - V_{i} - V_{i+1} - V_{i+2} - V_{i+3} + V_{i+1} + V_{i+2}\right)^2 \\
        &\;\; + \frac{1}{n} \sum_{i=1}^{n-3} \left(\tilde{W}_{i+1} + \tilde{\delta}_{i+1}^2 - \tilde{W}_i - \tilde{\delta}_{i}^2\right)^2 \\
        &\asymp n^{-4(\alpha \wedge 1)}  + \tilde{T}_2 + \frac{1}{n} \sum_{i=1}^{n-3} \left( V_{i+1} + V_{i+2} - V_i - V_{i+3} \right)^2 \\
        &\lesssim n^{-4(\alpha \wedge 1)} + \tilde{T}_2 \\
        &\;\; + \frac{1}{n} \sum_{i=1}^{n-3} \left(V_{i+1} + V_{i+2} + \delta_{i+1}^2 - V_i - V_{i+3} - (f(x_{i+3}) - f(x_i))^2 \right)^2 \\
        &\asymp n^{-4(\alpha \wedge 1)} + \tilde{T}_1 + \tilde{T}_2
    \end{align*}
    where we have used \(\tilde{\delta}_i^2 \lesssim n^{-2(\alpha \wedge 1)}\) and \(\left(f(x_{i+3}) - f(x_{i})\right)^2 \lesssim n^{-2(\alpha \wedge 1)}\). Hence, we have shown 
    \begin{equation*}
        \frac{1}{n} \sum_{i=1}^{n} \left(V_i - \bar{V}_n\right)^2 \lesssim n^{-1} + n^{-4(\alpha \wedge 1)} + T + \tilde{T}_1 + \tilde{T}_2
    \end{equation*}
    as desired. 
\end{proof}

Different estimators will be constructed to estimate \(T, \tilde{T}_1,\) and \(\tilde{T}_2\) respectively. A similar construction is employed with slight modifications for each target.

\subsection{Upper bound}\label{section:profile_upper_bound}

Recall the definition of the first-order differences \(R_i = Y_{i+1} - Y_i\) for \(1 \leq i \leq n-1\). For estimation of \(T\), the following statistic \(\hat{T}\) is used,
\begin{equation}\label{def:testing_statistic beta<1/4}
        \hat{T} = \frac{1}{2n^2} \sum_{|i-j| \geq 2} \frac{1}{3}\left(R_i^4 + R_j^4\right) - 2R_i^2R_j^2.
\end{equation}
Similar to the methodology in Section \ref{section: upper bound}, the sum is taken over \(|i-j| \geq 2\) to ensure \(R_i\) and \(R_j\) are independent which greatly simplifies calculations. 

\begin{proposition}\label{prop: MSE of T_hat beta<1/4}
Suppose \(\alpha > 0\). If \(f \in \mathcal{H}_\alpha\) and \(V \in \Sigma_1(0)\), then
  \begin{equation*}
     E_{f,V}\left(\left|\hat{T} - T\right|^2\right) \lesssim n^{-1} + n^{-8\alpha}
  \end{equation*}
  where \(\hat{T}\) and $T$ are given by (\ref{def:testing_statistic beta<1/4}) and (\ref{def:T_betazero}) respectively.
\end{proposition}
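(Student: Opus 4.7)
The plan is to bound $E_{f,V}(|\hat{T} - T|^2)$ via the standard decomposition $|E(\hat{T}) - T|^2 + \Var(\hat{T})$. Under (\ref{model:profile}) with Gaussian noise, $R_i = \delta_i + V_{i+1}^{1/2}\xi_{i+1} - V_i^{1/2}\xi_i$ is $N(\delta_i, W_i)$. Writing $U_i = W_i + \delta_i^2$, the Gaussian moment formula $E(X^4) = \mu^4 + 6\mu^2\sigma^2 + 3\sigma^4$ gives $E(R_i^2) = U_i$ and $E(R_i^4) = 3U_i^2 - 2\delta_i^4$, and independence of $R_i, R_j$ for $|i-j| \geq 2$ yields $E(R_i^2 R_j^2) = U_iU_j$. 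Consequently,
\begin{equation*}
    E\!\left[\tfrac{1}{3}(R_i^4 + R_j^4) - 2R_i^2 R_j^2\right] = (U_i - U_j)^2 - \tfrac{2}{3}(\delta_i^4 + \delta_j^4).
\end{equation*}
The coefficient $\tfrac{1}{3}$ in (\ref{def:testing_statistic beta<1/4}) is chosen precisely so that $3U_i^2$ combines with $-2U_iU_j$ to form $(U_i - U_j)^2$, leaving only a small $\delta^4$ remainder.

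For the bias, summing gives $E(\hat{T}) = \frac{1}{2n^2}\sum_{|i-j|\geq 2}(U_i - U_j)^2 - \frac{1}{3n^2}\sum_{|i-j|\geq 2}(\delta_i^4 + \delta_j^4)$. This differs from $T$ by (i) the missing $|i-j|\leq 1$ diagonal, containing $O(n)$ terms each with $(U_i - U_j)^2 \lesssim 1$ by the uniform bounds $V_i \in [0, M]$ and $|\delta_i| \lesssim n^{-(\alpha \wedge 1)}$; (ii) a normalization discrepancy from averaging versus the definition (\ref{def:T_betazero}) of $T$, also $O(n^{-1})$; and (iii) the $\delta^4$ correction, of order $O(n^{-4(\alpha \wedge 1)})$ since $|\delta_i|^4 \lesssim n^{-4(\alpha\wedge 1)}$. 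Squaring yields $|E(\hat{T}) - T|^2 \lesssim n^{-2} + n^{-8(\alpha \wedge 1)} \lesssim n^{-1} + n^{-8\alpha}$.

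For the variance, decompose $\hat{T} = A - B$ with $A = \tfrac{1}{6n^2}\sum_{|i-j|\geq 2}(R_i^4 + R_j^4) = \sum_i a_i R_i^4$ where $a_i \asymp n^{-1}$, and $B = \tfrac{1}{n^2}\sum_{|i-j|\geq 2}R_i^2 R_j^2$. Since $\Cov(R_i^4, R_j^4) = 0$ for $|i-j|\geq 2$, the variance of $A$ collapses to $O(n)$ pair-covariances, each $O(1)$ by uniform moment bounds on the Gaussians $R_i$; weighted by $a_ia_j \asymp n^{-2}$ this gives $\Var(A) \lesssim n^{-1}$. For $B$, note $\Cov(R_i^2R_j^2, R_k^2R_l^2) = 0$ unless the noise-index sets $\{i, i+1, j, j+1\}$ and $\{k, k+1, l, l+1\}$ overlap. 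A combinatorial count shows at most $O(n^3)$ quadruples with $|i-j|, |k-l|\geq 2$ satisfy this condition; each surviving covariance is $O(1)$ by Cauchy--Schwarz and the uniform fourth-moment bound on $R_i$. Hence $\Var(B) \lesssim n^{-4}\cdot n^3 = n^{-1}$, and $\Cov(A, B)$ is handled analogously.

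The main obstacle is the bookkeeping for $\Var(B)$: since $\hat{T}$ is a partial $U$-statistic over a design where neighboring $R_i$'s share a noise variable, the cancellation pattern is subtler than in the iid setting, and one must carefully classify overlap types (sharing one, two, three, or four noise variables) to see that each class contributes at most $O(n^3)$ quadruples with $O(1)$ covariance. The pointwise boundedness $V_i \in [0, M]$ and $\|f\|_\infty \lesssim 1$ are what keep every Gaussian moment that appears uniformly $O(1)$, reducing the whole variance calculation to a counting exercise. Combining the bias and variance bounds delivers $E_{f,V}(|\hat{T} - T|^2) \lesssim n^{-1} + n^{-8\alpha}$.
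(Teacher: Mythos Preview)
Your proposal is correct and follows essentially the same approach as the paper: compute the bias via the Gaussian moment identities $E(R_i^2)=U_i$ and $E(R_i^4)=3U_i^2-2\delta_i^4$ (the paper writes this as $3W_i^2+6W_i\delta_i^2+\delta_i^4$, which is the same thing), then reduce to $\frac{1}{2n^2}\sum_{|i-j|\geq 2}(U_i-U_j)^2$ plus an $O(n^{-4(\alpha\wedge 1)})$ remainder, and finally invoke the identity $\sum_{i<j}(a_i-a_j)^2=(n-1)\sum_i(a_i-\bar a)^2$ to match $T$ up to $O(n^{-1})$. The only cosmetic difference is in the variance: you split $\hat T=A-B$ and bound each piece, whereas the paper keeps the full summand $\tfrac{1}{3}(R_i^4+R_j^4)-2R_i^2R_j^2$ intact and directly observes that $\Cov(\cdot,\cdot)=0$ unless $\{k,l\}\cap\{i-1,i,i+1,j-1,j,j+1\}\neq\emptyset$, then counts $O(n^3)$ such quadruples with $O(1)$ covariance each---exactly your argument without the intermediate $A$--$B$ decomposition.
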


\noindent To estimate \(\tilde{T}_1\), define \(S_i = Y_{i+3} - Y_i\) for \(1 \leq i \leq n-3\). Consider the statistic 
\begin{equation}\label{def:T1hat}
    \hat{T}_1 = \frac{1}{n} \sum_{i=1}^{n-3} \frac{1}{3} \left(R_{i+1}^4 + S_{i}^4\right) - 2R_{i+1}^2 S_i^2.
\end{equation}
The following estimation guarantee is available.
\begin{proposition}\label{prop:mse_T1_tilde}
    Suppose \(\alpha > 0\). If \(f \in \mathcal{H}_\alpha\) and \(V \in \Sigma_1(0)\), then 
    \begin{equation*}
        E_{f, V}\left(\left|\hat{T}_1 - \tilde{T}_1\right|^2\right) \lesssim n^{-1} + n^{-8\alpha}
    \end{equation*}
    where \(\hat{T}_1\) and \(\tilde{T}_1\) are given by (\ref{def:T1hat}) and (\ref{def:T1_tilde_betazero}) respectively. 
\end{proposition}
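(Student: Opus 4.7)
The plan is to mirror the proof of Proposition \ref{prop: MSE of T_hat beta<1/4}, decomposing the mean squared error into squared bias and variance. The crucial structural observation is that $R_{i+1} = Y_{i+2} - Y_{i+1}$ and $S_i = Y_{i+3} - Y_i$ involve disjoint design points, so under Gaussian noise they are independent with $R_{i+1} \sim N(\delta_{i+1}, W_{i+1})$ and $S_i \sim N(f(x_{i+3}) - f(x_i),\, V_i + V_{i+3})$. This independence drives both the bias identity and the variance bound.

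For the bias, I would first establish the Gaussian moment identity: for $Z \sim N(\mu, \sigma^2)$, one has $E[Z^4] = 3(\mu^2 + \sigma^2)^2 - 2\mu^4$. Combined with independence, this yields, for $X \sim N(\mu_X, \sigma_X^2)$ and $Y \sim N(\mu_Y, \sigma_Y^2)$ independent,
\[
    E\left[\tfrac{1}{3}(X^4 + Y^4) - 2 X^2 Y^2\right] = \bigl(E[X^2] - E[Y^2]\bigr)^2 - \tfrac{2}{3}\bigl(\mu_X^4 + \mu_Y^4\bigr).
\]
Applied term-by-term to $\hat{T}_1$ with $X = R_{i+1}$ and $Y = S_i$, the squared-difference part becomes exactly the $i$th summand of $\tilde{T}_1$, producing
\[
    E_{f,V}[\hat{T}_1] = \tilde{T}_1 - \frac{2}{3n}\sum_{i=1}^{n-3}\bigl(\delta_{i+1}^4 + (f(x_{i+3}) - f(x_i))^4\bigr).
\]
Since $f \in \mathcal{H}_\alpha$ gives $|\delta_{i+1}|,\, |f(x_{i+3}) - f(x_i)| \lesssim n^{-(\alpha \wedge 1)}$, the bias satisfies $|E_{f,V}[\hat{T}_1] - \tilde{T}_1| \lesssim n^{-4(\alpha \wedge 1)}$, so the squared bias is $\lesssim n^{-8(\alpha\wedge 1)} \leq n^{-8\alpha} + n^{-1}$ (the $n^{-1}$ absorbs the case $\alpha > 1$).

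For the variance, write $\hat{T}_1 = n^{-1}\sum_{i=1}^{n-3} Z_i$ with $Z_i = \tfrac{1}{3}(R_{i+1}^4 + S_i^4) - 2 R_{i+1}^2 S_i^2$. Since $Z_i$ is a function of $(Y_i, Y_{i+1}, Y_{i+2}, Y_{i+3})$, the summands $Z_i$ and $Z_j$ are independent whenever $|i - j| \geq 4$. Consequently,
\[
    \Var_{f,V}(\hat{T}_1) = \frac{1}{n^2} \sum_{|i-j| \leq 3} \Cov(Z_i, Z_j) \leq \frac{7(n-3)}{n^2} \cdot \max_i \Var(Z_i) \lesssim n^{-1}
\]
by Cauchy--Schwarz, provided $\max_i \Var(Z_i) \lesssim 1$. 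The latter holds because $R_{i+1}, S_i$ are Gaussians with means bounded by $2\|f\|_\infty \leq 2M$ and variances bounded by $2M$ (using $V \in [0,M]^n$); hence all moments up to order eight, and thus $\Var(Z_i)$, are bounded by universal constants. Combining squared bias and variance yields the claim.

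The main obstacle is really just bookkeeping: carefully performing the Gaussian fourth-moment expansion to verify the exact cancellation that identifies $\hat{T}_1$ as an (essentially unbiased) estimator of $\tilde{T}_1$. Once the algebraic identity above is in place, the H\"{o}lder bound on the bias and the local-dependence bound on the variance are routine — no convolution/kernel-bias analysis of the sort needed in Proposition \ref{prop: MSE of T_hat beta>1/4} is required here, because no smoothing kernel is involved.
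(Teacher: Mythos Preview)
Your proposal is correct and essentially identical to the paper's own proof: the paper likewise uses the independence of $R_{i+1}$ and $S_i$, the Gaussian fourth-moment formula to identify the bias as $-\frac{2}{3n}\sum(\delta_{i+1}^4 + (f(x_{i+3})-f(x_i))^4) = O(n^{-4(\alpha\wedge 1)})$, and the local dependence structure (independence when $|i-j|\geq 4$) together with boundedness of $f,V$ to get $\Var(\hat{T}_1)\lesssim n^{-1}$. Your repackaging of the fourth-moment identity as $E[Z^4]=3(E[Z^2])^2-2\mu^4$ is a cosmetic difference only.
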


\noindent A slightly different first-order difference is useful for estimating \(\tilde{T}_2\). For \(1 \leq i \leq n-2\), define \(\tilde{R}_i = Y_{i+2} - Y_i\). Consider 
\begin{equation}\label{def:T2hat}
    \hat{T}_2 = \frac{1}{n} \sum_{i=1}^{n-3} \frac{1}{3} \left(\tilde{R}_{i+1}^4 + \tilde{R}_{i}^4\right) - 2 \tilde{R}_{i+1}^2\tilde{R}_{i}^2. 
\end{equation}
A similar guarantee to Proposition \ref{prop:mse_T1_tilde} can be established.
\begin{proposition}\label{prop:mse_T2_tilde}
    Suppose \(\alpha > 0\). If \(f \in \mathcal{H}_\alpha\) and \(V \in \Sigma_1(0)\), then 
    \begin{equation*}
        E_{f, V}\left(\left|\hat{T}_2 - \tilde{T}_2\right|^2\right) \lesssim n^{-1} + n^{-8\alpha}
    \end{equation*}
    where \(\hat{T}_2\) and \(\tilde{T}_2\) are given by (\ref{def:T2hat}) and (\ref{def:T2_tilde_betazero}) respectively. 
\end{proposition}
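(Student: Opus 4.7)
}

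The plan is to follow the same bias-variance template used for Propositions \ref{prop: MSE of T_hat beta<1/4} and \ref{prop:mse_T1_tilde}, adapted to the stride-two differences $\tilde{R}_i = Y_{i+2} - Y_i$. The first observation I would make is that $\tilde{R}_i$ depends only on $Y_i, Y_{i+2}$ while $\tilde{R}_{i+1}$ depends only on $Y_{i+1}, Y_{i+3}$, so these two variables involve disjoint design indices and are therefore independent. Under Gaussian noise, $\tilde{R}_i \sim N(\tilde{\delta}_i, \tilde{W}_i)$, so standard Gaussian moment formulas give $E(\tilde{R}_i^2) = \tilde{\delta}_i^2 + \tilde{W}_i$ and $E(\tilde{R}_i^4) = \tilde{\delta}_i^4 + 6\tilde{\delta}_i^2\tilde{W}_i + 3\tilde{W}_i^2$.

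For the bias, I would compute the expectation of the $i$-th summand $Z_i := \tfrac{1}{3}(\tilde{R}_{i+1}^4 + \tilde{R}_i^4) - 2\tilde{R}_{i+1}^2\tilde{R}_i^2$ using the moment formulas and the independence of $\tilde{R}_i, \tilde{R}_{i+1}$. A short algebraic rearrangement (exactly as in the analogous computations for $\hat{T}$ and $\hat{T}_1$) shows
\begin{equation*}
    E(Z_i) = \bigl(\tilde{W}_{i+1} + \tilde{\delta}_{i+1}^2 - \tilde{W}_i - \tilde{\delta}_i^2\bigr)^2 - \tfrac{2}{3}\bigl(\tilde{\delta}_{i+1}^4 + \tilde{\delta}_i^4\bigr),
\end{equation*}
so that the per-term bias is $-\tfrac{2}{3}(\tilde{\delta}_{i+1}^4 + \tilde{\delta}_i^4)$. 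Since $f \in \mathcal{H}_\alpha$ implies $|\tilde{\delta}_i| \lesssim n^{-(\alpha \wedge 1)}$, averaging and squaring yields $(E(\hat{T}_2) - \tilde{T}_2)^2 \lesssim n^{-8(\alpha \wedge 1)}$, which is at most $n^{-1} + n^{-8\alpha}$ for all $\alpha > 0$.

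For the variance, I would exploit the fact that $Z_i$ is a function of $Y_i, Y_{i+1}, Y_{i+2}, Y_{i+3}$ only, so $Z_i$ and $Z_j$ are independent whenever $|i - j| \geq 4$. Writing
\begin{equation*}
    \Var(\hat{T}_2) = \frac{1}{n^2}\sum_{i,j: |i-j| \leq 3} \Cov(Z_i, Z_j),
\end{equation*}
each covariance is controlled by Cauchy-Schwarz using $E(Z_i^2) \lesssim 1$, which holds because $Z_i$ is a degree-four polynomial in Gaussians whose means $\tilde{\delta}_i$ and variances $\tilde{W}_i$ are uniformly bounded ($V \in [0,M]^n$ and $f \in \mathcal{H}_\alpha$). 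The number of nonzero terms in the double sum is $O(n)$, so $\Var(\hat{T}_2) \lesssim n^{-1}$. Combining with the squared bias via the standard decomposition gives the claim.

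The main obstacle here is really only bookkeeping; the key structural idea — that taking a stride of two in the difference $\tilde{R}_i = Y_{i+2} - Y_i$ makes adjacent $\tilde{R}_i, \tilde{R}_{i+1}$ pairs independent while letting their squared expectations reproduce $\tilde{W}_i$ up to a small $\tilde{\delta}_i^2$ bias — is inherited wholesale from the constructions used for $\hat{T}$ and $\hat{T}_1$, and the debiasing coefficients $(1/3, 1/3, -2)$ are engineered precisely so that, for Gaussian variables, the expectation of each summand collapses to a perfect square $(\tilde{W}_{i+1} + \tilde{\delta}_{i+1}^2 - \tilde{W}_i - \tilde{\delta}_i^2)^2$ modulo a bias of order $n^{-4(\alpha \wedge 1)}$.
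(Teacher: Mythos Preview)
Your proposal is correct and follows exactly the approach the paper intends: the paper's proof simply reads ``entirely analogous to the proof of Proposition~\ref{prop:mse_T1_tilde},'' and you have correctly spelled out that analogy, identifying the key independence of $\tilde{R}_i$ and $\tilde{R}_{i+1}$ (disjoint design indices), the Gaussian moment computation yielding the per-term bias $-\tfrac{2}{3}(\tilde{\delta}_{i+1}^4+\tilde{\delta}_i^4)$, and the banded covariance structure ($Z_i \indep Z_j$ for $|i-j|\ge 4$) that gives $\Var(\hat{T}_2)\lesssim n^{-1}$.
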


The Gaussian nature of the noise is critical in all three estimators (\ref{def:testing_statistic beta<1/4}), (\ref{def:T1hat}), and (\ref{def:T2hat}). In fact, the normalizing factor \(\frac{1}{3}\) is a consequence of the fact that \(E(Z^4) = 3\) for \(Z \sim N(0, 1)\). Proper normalization is important to obtaining the correct bias behavior. Having in hand \(\hat{T}, \hat{T}_1,\) and \(\hat{T}_2\), which are given respectively by (\ref{def:T_betazero}), (\ref{def:T1_tilde_betazero}), and (\ref{def:T2_tilde_betazero}), the following statistic will be used for testing (\ref{problem:var0_profile})-(\ref{problem:var1_profile}) 
\begin{equation}\label{def:Shat}
    \hat{S} = \hat{T} + \hat{T}_1 + \hat{T}_2.
\end{equation}
From Propositions \ref{prop: MSE of T_hat beta<1/4}, \ref{prop:mse_T1_tilde}, and \ref{prop:mse_T2_tilde}, it follows 
\begin{equation*}
    \sup_{\substack{f \in \mathcal{H}_\alpha,\\ V \in \Sigma_1(0)}} E_{f, V}\left(\left| \hat{S} - (T + \tilde{T}_1 + \tilde{T}_2)\right|^2\right) \lesssim n^{-1} + n^{-8\alpha}. 
\end{equation*}
To illustrate how \(\hat{S}\) is useful for heteroskedasticity detection, consider under the null hypothesis it is clear \(T, \tilde{T}_1, \tilde{T}_2 \lesssim n^{-4(\alpha \wedge 1)}\) and so \(\hat{S} \lesssim n^{-1/2} + n^{-4\alpha}\) with high probability. Under the alternative hypothesis, it follows from Proposition \ref{prop:signal} that \(\frac{1}{n} \sum_{i=1}^{n} \left(V_i - \bar{V}_n\right)^2 \lesssim n^{-4\alpha} + n^{-1/2} + \hat{S}\) with high probability. Therefore, if \(\frac{1}{n} \sum_{i=1}^{n} \left(V_i - \bar{V}_n\right)^2 \geq \varepsilon^2 \gtrsim n^{-4 \alpha } + n^{-1/2}\), then it must be the case that \(\frac{1}{n} \sum_{i=1}^{n} \left(V_i - \bar{V}_n\right)^2 \lesssim \hat{S}\). Hence, \(\hat{S}\) can be used to detect the alternative. 

\begin{theorem}\label{thm:beta_zero_upper}
    Suppose \(\alpha > 0\). If \(\eta \in (0, 1)\), there exist \(C_\eta', C_\eta > 0\) depending only on \(\eta\) such that for all \(C > C_\eta\) we have 
    \begin{equation*}
        \sup_{\substack{f \in \mathcal{H}_\alpha, \\ V \in \Sigma_0}} P_{f, V}\left\{ \hat{S} > C_\eta' \left(n^{-4\alpha} + n^{-1/2}\right)\right\} + \sup_{\substack{f \in \mathcal{H}_\alpha, \\ V \in \Sigma_1(C(n^{-2\alpha} + n^{-1/4}))}} P_{f, V}\left\{ \hat{S} \leq C_\eta'\left( n^{-4\alpha} + n^{-1/2}\right)\right\} \leq \eta
    \end{equation*}
    where \(\hat{S}\) is given by (\ref{def:Shat}). 
\end{theorem}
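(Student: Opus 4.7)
The plan is to combine the MSE bounds for $\hat{T}, \hat{T}_1, \hat{T}_2$ (Propositions \ref{prop: MSE of T_hat beta<1/4}, \ref{prop:mse_T1_tilde}, and \ref{prop:mse_T2_tilde}) with the signal lower bound (Proposition \ref{prop:signal}) via a Chebyshev argument. First I would observe that by the triangle inequality and the three MSE bounds, one has
\[
\sup_{f \in \mathcal{H}_\alpha, \, V \in \Sigma_1(0)} E_{f,V}\left(\left|\hat{S} - (T + \tilde{T}_1 + \tilde{T}_2)\right|^2\right) \lesssim n^{-1} + n^{-8\alpha},
\]
so Chebyshev's inequality yields, for any $\eta \in (0,1)$, a constant $D_\eta > 0$ such that with $P_{f,V}$-probability at least $1 - \eta$,
\[
\left|\hat{S} - (T + \tilde{T}_1 + \tilde{T}_2)\right| \leq D_\eta \left(n^{-1/2} + n^{-4\alpha}\right).
\]

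Next I would handle the null. Under $H_0$, $V \equiv \sigma^2$ is constant on the design points, so $W_i \equiv 2\sigma^2$ and $\tilde W_i \equiv 2\sigma^2$; moreover $V_i + V_{i+3} \equiv 2\sigma^2$. Hence all $W$- and $\tilde W$-differences vanish in the definitions of $T, \tilde T_1, \tilde T_2$, and only contributions from $\delta_i^2, \tilde\delta_i^2, (f(x_{i+3})-f(x_i))^2$ remain. Since $f \in \mathcal{H}_\alpha$, each such quantity is $\lesssim n^{-2(\alpha \wedge 1)}$, giving $T + \tilde T_1 + \tilde T_2 \lesssim n^{-4(\alpha \wedge 1)} \lesssim n^{-4\alpha} + n^{-1/2}$ (the last bound holding uniformly in $\alpha > 0$ since $n^{-4(\alpha \wedge 1)} \leq n^{-4\alpha}$ for $\alpha \leq 1$ and $\leq n^{-1/2}$ for $\alpha \geq 1/8$). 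Combining with the Chebyshev bound, one may pick $C_\eta'$ large enough so that $\hat{S} \leq C_\eta'(n^{-4\alpha} + n^{-1/2})$ with probability at least $1 - \eta/2$ under $H_0$.

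For the alternative, suppose $V \in \Sigma_1(C\zeta_n)$ where $\zeta_n := n^{-2\alpha} + n^{-1/4}$, so that $\frac{1}{n}\sum_{i=1}^n (V_i - \bar V_n)^2 \geq C^2 \zeta_n^2 \gtrsim C^2(n^{-4\alpha} + n^{-1/2})$. Proposition \ref{prop:signal} then gives
\[
T + \tilde T_1 + \tilde T_2 \gtrsim C^2 (n^{-4\alpha} + n^{-1/2}) - O(n^{-4(\alpha \wedge 1)} + n^{-1}) \gtrsim c C^2 (n^{-4\alpha} + n^{-1/2})
\]
for $C$ exceeding a sufficiently large constant $C_\eta$. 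Applying the Chebyshev bound above in the other direction gives $\hat{S} \geq T + \tilde T_1 + \tilde T_2 - D_\eta (n^{-4\alpha} + n^{-1/2})$ with probability at least $1 - \eta/2$, which exceeds $C_\eta'(n^{-4\alpha} + n^{-1/2})$ once $C_\eta$ is taken large relative to $C_\eta'$ and $D_\eta$. A union bound over null and alternative events yields the desired inequality.

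There is no substantial obstacle here; the work has essentially been done already in the preceding propositions. The one mild subtlety is the bookkeeping under $H_0$ to see that $T, \tilde T_1, \tilde T_2$ are each of order $n^{-4(\alpha \wedge 1)}$ — in particular, verifying that the $W$ and $\tilde W$ increments vanish identically and that the residual $\delta$-terms are harmless — and the corresponding arithmetic to merge $n^{-4(\alpha \wedge 1)}$ into the threshold $n^{-4\alpha} + n^{-1/2}$ uniformly in $\alpha > 0$. Once that is checked, choosing $C_\eta'$ and then $C_\eta$ large enough (relative to the universal constants, $D_\eta$, and the implicit constants in Proposition \ref{prop:signal}) completes the argument.
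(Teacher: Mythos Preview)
Your proposal is correct and follows essentially the same approach as the paper's proof: combine the three MSE bounds (Propositions \ref{prop: MSE of T_hat beta<1/4}, \ref{prop:mse_T1_tilde}, \ref{prop:mse_T2_tilde}) with Proposition \ref{prop:signal} and apply Chebyshev's inequality on both the null and alternative sides. The only cosmetic difference is that the paper bounds $E(\hat{S}^2)$ directly under $H_0$ (absorbing $T+\tilde T_1+\tilde T_2 \lesssim n^{-4(\alpha\wedge 1)}$ into the second moment) rather than first controlling $|\hat{S}-(T+\tilde T_1+\tilde T_2)|$ and then the target, but the arithmetic and the handling of $n^{-4(\alpha\wedge 1)}$ versus $n^{-4\alpha}+n^{-1/2}$ are the same.
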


The testing statistic \(\hat{S}\) is composed of three statistics \(\hat{T}, \hat{T}_1,\) and \(\hat{T}_2\) which are critically different in their details, but are all of the same flavor. Interestingly, though these are reminiscent of the one proposed in \cite{dette_testing_1998}, the statistic \(\hat{S}\) is able to test (\ref{problem:var0_profile})-(\ref{problem:var1_profile}) with no smoothness assumptions on \(V\) and does not require \(\alpha \geq \frac{1}{2}\). Specifically, \(\hat{S}\) is able to achieve (when \(\alpha\) is large enough) the same \(n^{-1/2}\) rate obtained in \cite{dette_testing_1998}. 

\subsection{Lower bound}\label{section:profile_lower_bound}
From a lower bound perspective, the \(n^{-4\alpha}\) term in (\ref{rate:betazero}) can be obtained essentially from the argument used to prove Proposition \ref{prop:nonparam_lbound}, which employed a moment-matching construction inspired by \cite{wang_effect_2008}. Therefore, only a proof of the \(n^{-1/2}\) term will be given.

\begin{theorem}\label{thm:beta_zero_lower}
    Suppose \(\alpha > 0\). If \(\eta \in (0, 1)\), then there exists \(c_\eta > 0\) not depending on \(n\) such that for all \(0 < c < c_\eta\) and \(n\) sufficiently large depending only on \(\eta\), we have 
    \begin{equation*}
        \inf_{\varphi}\left\{ \sup_{\substack{f \in \mathcal{H}_\alpha, \\ V \in \Sigma_0}} P_{f, V}\left\{\varphi = 1\right\} + \sup_{\substack{f \in \mathcal{H}_\alpha, \\ V \in \Sigma_1(cn^{-1/4})}} P_{f, V}\left\{\varphi = 0\right\} \right\} \geq 1-\eta. 
    \end{equation*}
\end{theorem}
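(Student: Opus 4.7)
The plan is to use a Bayes-testing reduction with a symmetric two-point prior on the variance profile, combined with a $\chi^2$-divergence calculation that exploits cancellation of the leading term. I take $f \equiv 0$ under both hypotheses so the mean function plays no role. Under the null, I use the single profile $V_i \equiv 1$, giving $Y_1,\dots,Y_n \overset{iid}{\sim} N(0,1)$. Under the alternative, I place a product prior $\pi$ on $V$ with $V_i = 1 + \varepsilon \omega_i$ where $\omega_i$ are i.i.d.\ Rademacher and $\varepsilon = c n^{-1/4}$; for $c$ small, $V \in [0, M]^n$. A Chebyshev bound on $\bar{\omega}_n$ yields $\frac{1}{n}\sum_{i=1}^n (V_i - \bar{V}_n)^2 = \varepsilon^2(1 - \bar{\omega}_n^2) \geq \varepsilon^2/2$ with probability $\geq 1 - \eta/4$, so standard truncation lets me treat $\pi$ as supported on $\Sigma_1(c' n^{-1/4})$ for $c' = c/\sqrt{2}$ at cost at most $\eta/4$ in the testing risk.

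Since the $V_i$ and the Gaussian noise are independent, integrating out the prior makes the alternative data law $Q^{\otimes n}$ with $Q = \tfrac{1}{2} N(0, 1-\varepsilon) + \tfrac{1}{2} N(0, 1+\varepsilon)$. The standard Ingster--Suslina reduction gives
\begin{equation*}
    \mathcal{R}(c n^{-1/4}) \;\geq\; 1 - d_{TV}\bigl(N(0,1)^{\otimes n},\, Q^{\otimes n}\bigr) - \eta/4,
\end{equation*}
and the inequality $d_{TV} \leq \tfrac{1}{2}\sqrt{\chi^2}$ together with the tensorization identity $\chi^2(Q^{\otimes n}, P^{\otimes n}) = (1 + \chi^2(Q, P))^n - 1$ reduces the problem to bounding $\chi^2(Q, N(0,1))$ for a single observation.

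The core computation is to show $\chi^2(Q, N(0,1)) = O(\varepsilon^4)$. Writing $p_\tau$ for the density of $N(0, 1+\tau)$, a direct integration (completing the square in the exponent) gives $\int p_\tau^2 / p_0 \, dy = (1-\tau^2)^{-1/2}$ and $\int p_{-\varepsilon} p_\varepsilon / p_0 \, dy = (1+\varepsilon^2)^{-1/2}$. Expanding $q^2 = \tfrac{1}{4}(p_{-\varepsilon}^2 + p_\varepsilon^2 + 2 p_{-\varepsilon} p_\varepsilon)$ gives
\begin{equation*}
    \chi^2(Q, N(0,1)) \;=\; \tfrac{1}{2}\bigl((1-\varepsilon^2)^{-1/2} + (1+\varepsilon^2)^{-1/2}\bigr) - 1 \;=\; \tfrac{3}{8}\varepsilon^4 + O(\varepsilon^6)
\end{equation*}
by Taylor expansion, the odd powers in $\varepsilon$ cancelling by symmetry of the prior. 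With $\varepsilon = c n^{-1/4}$ one obtains $n\,\chi^2(Q, N(0,1)) \lesssim c^4$, so the tensorized $\chi^2$ can be made smaller than any prescribed constant by choosing $c$ small, completing the argument.

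The main obstacle, and the reason for the specific choice of prior, is the cancellation that yields $\varepsilon^4$ rather than $\varepsilon^2$. A one-sided prior (e.g., $V_i = 1 + \varepsilon$ deterministically) would give $\chi^2(Q, P) \asymp \varepsilon^2$ and only the rate $\varepsilon \lesssim n^{-1/2}$; but such a perturbation represents merely a shift in the common variance, which the null hypothesis already accommodates via the free parameter $\sigma^2$. The symmetric two-point prior keeps $\bar{V}_n$ near $1$ while creating genuine profile heterogeneity, and the symmetry around the null variance kills the quadratic Taylor coefficient in $\chi^2$, leaving the quartic term that produces the correct $n^{-1/4}$ separation rate.
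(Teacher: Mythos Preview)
Your proof is correct and follows the same overall architecture as the paper: a symmetric Rademacher prior $V_i = 1 + \varepsilon\omega_i$ on the variance profile, reduction to a product-vs-product $\chi^2$ via tensorization, and the key observation that the per-coordinate $\chi^2$ is $O(\varepsilon^4)$ rather than $O(\varepsilon^2)$.

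The difference lies in how the per-coordinate $\chi^2$ is obtained. You compute it directly: the identity $\int p_\tau p_{\tau'}/p_0 = (1-\tau\tau')^{-1/2}$ gives $\chi^2(Q,N(0,1)) = \tfrac{1}{2}\bigl[(1-\varepsilon^2)^{-1/2}+(1+\varepsilon^2)^{-1/2}\bigr]-1 = \tfrac{3}{8}\varepsilon^4 + O(\varepsilon^6)$, with the $\varepsilon^2$ term visibly cancelling. The paper instead shifts the null variance to $1+\tau$ with $\tau=2\rho$, rewrites both marginals as convolutions $\nu_0*N(0,1)$ and $\nu_1*N(0,1)$ with $\nu_0=N(0,\tau)$ and $\nu_1=\tfrac{1}{2}N(0,\tau-\rho)+\tfrac{1}{2}N(0,\tau+\rho)$, checks that $\nu_0,\nu_1$ share their first three moments, and invokes the moment-matching $\chi^2$ bound of Wu--Yang to get $\chi^2\lesssim\rho^4$. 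Your route is more elementary and self-contained (no auxiliary shift, no external proposition, and you get the exact constant); the paper's route is more modular, showing the $\varepsilon^4$ as an instance of a general ``matched low moments $\Rightarrow$ small $\chi^2$'' principle that it also uses elsewhere (e.g., for the $n^{-4\alpha}$ term).
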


The proof can be found in the supplementary material; a brief sketch is given here. The minimax testing risk is bounded from below by obtaining a simple-versus-simple testing problem by constructing a prior (nearly) supported on the alternative. Though the construction in the proof is slightly different, for all intents and purposes the following is essentially employed, 
\begin{align*}
    H_0 &: f \equiv 0 \text{ and } V \equiv 1, \\
    H_1 &: f \equiv 0 \text{ and } V_1,...,V_n \overset{iid}{\sim} \frac{1}{2} \delta_{1-cn^{-1/4}} + \frac{1}{2}\delta_{1+cn^{-1/4}}.
\end{align*}
In the alternative, \(\frac{1}{n}\sum_{i=1}^{n} (V_i - \bar{V}_n)^2 \gtrsim n^{-1/2}\) with high probability. For small enough \(c\), the \(\chi^2\)-divergence can be bounded by a small constant, establishing that the hypotheses cannot be distinguished despite the heteroskedasticity in the alternative. The indistinguishability can be intuited by noticing that under \(H_0\) the data are \(Y_1,...,Y_n \overset{iid}{\sim} N(0, 1)\), whereas under \(H_1\) the data are \(Y_1,...,Y_n \overset{iid}{\sim} \frac{1}{2} N(0, 1-cn^{-1/4}) + \frac{1}{2}N(0, 1 + cn^{-1/4})\). Specifically, the null and alternative distributions share the first three moments, indicating that the distributions are difficult to distinguish. The moment matching approach for establishing lower bounds was used earlier in Section \ref{section:lower_bounds}. 

\section{Adaptation}\label{section:adaptation}
In this section, we consider adaptation to \(\alpha\) in the setup of Section \ref{section:profile}. As seen in Theorem \ref{thm:beta_zero_upper}, the testing procedure we propose requires knowledge of \(\alpha\) to define the cutoff value for the rejection criterion. The H\"{o}lder exponent is not typically known in practice, and so adaptation is of interest. Definition \ref{def:adaptive_test} lays out what it means for a test to achieve a candidate rate simultaneously over all possible \(\alpha\). We say adaptation is possible if there exists exists a test \(\varphi\) which achieves the minimax rate (\ref{rate:betazero}) simultaneously over \(\alpha\) in the sense of Definition \ref{def:adaptive_test}.

\begin{definition}\label{def:adaptive_test}
    For a candidate rate \(\alpha \mapsto \varepsilon(\alpha)\), we say the test \(\varphi\) achieves the rate \(\varepsilon(\alpha)\) simultaneously over \(\alpha\) if for all \(\eta \in (0, 1)\) there exists \(C_\eta > 0\) depending only on \(\eta\) such that for all \(C \geq C_\eta\), 
\begin{equation*}
    \sup_{\alpha > 0} \sup_{\substack{f \in \mathcal{H}_{\alpha}, \\ V \in \Sigma_0}} P_{f,V}\left\{\varphi = 1\right\} + \sup_{\alpha > 0} \sup_{\substack{f \in \mathcal{H}_{\alpha}, \\ V \in \Sigma_1(C\varepsilon(\alpha))}} P_{f,V}\left\{\varphi = 0\right\} \leq \eta. 
\end{equation*}
\end{definition}

It turns out adaptation is impossible. In fact, it is not even possible to adapt to just two smoothness classes. To elaborate, fix \(\alpha_1 < \alpha_2 < \frac{1}{4}\) and consider that for any test \(\varphi\) and any candidate rate \(\alpha \mapsto \varepsilon(\alpha)\), we have 
\begin{align*}
    &\sup_{\alpha > 0} \sup_{\substack{f \in \mathcal{H}_{\alpha}, \\ V \in \Sigma_0}} P_{f,V}\left\{\varphi = 1\right\} + \sup_{\alpha > 0} \sup_{\substack{f \in \mathcal{H}_{\alpha}, \\ V \in \Sigma_1(C\varepsilon(\alpha))}} P_{f,V}\left\{\varphi = 0\right\} \\
    &\geq \sup_{\substack{f \in \mathcal{H}_{\alpha_1}, \\ V \in \Sigma_0}} P_{f,V}\left\{\varphi = 1\right\} + \sup_{\substack{f \in \mathcal{H}_{\alpha_2}, \\ V \in \Sigma_1(C\varepsilon(\alpha_2))}} P_{f,V}\left\{\varphi = 0\right\}.
\end{align*}
The testing risk on the right hand side corresponds to the risk of \(\varphi\) in the following problem, 
\begin{align}
    H_0 &: f \in \mathcal{H}_{\alpha_1} \text{ and } V \in \Sigma_0, \label{problem:alpha_adapt0}\\
    H_1 &: f \in \mathcal{H}_{\alpha_2} \text{ and } V \in \Sigma_{1}(\varepsilon(\alpha_2)). \label{problem:alpha_adapt1}
\end{align}
It turns out one can only distinguish the hypotheses if the candidate rate \(\alpha \mapsto \varepsilon(\alpha)\) satisfies \(\varepsilon(\alpha_2) \gtrsim n^{-2\alpha_1}\).  In other words, the statistician cannot exploit the additional smoothness in the alternative and instead is forced to aim at the slower rate associated to the rougher null hypothesis. The following theorem formally states this impossibility result.

\begin{theorem}\label{thm:alpha_adapt}
    Suppose \(0 < \alpha_1 < \alpha_2 < \frac{1}{4}\). Assume \(n\) is larger than some sufficiently large universal constant. If \(\eta \in (0, 1)\), then there exists \(c_\eta > 0\) not depending on \(n\) such that for all \(0 < c < c_\eta\) we have 
    \begin{equation*}
        \inf_{\varphi}\left\{ \sup_{\substack{f \in \mathcal{H}_{\alpha_1}, \\ V \in \Sigma_0}} P_{f, V}\left\{\varphi = 1\right\} + \sup_{\substack{f \in \mathcal{H}_{\alpha_2}, \\ V \in \Sigma_1\left(cn^{-2\alpha_1}\right)}} P_{f,V}\left\{\varphi = 0\right\} \right\} \geq 1-\eta.
    \end{equation*}
\end{theorem}

This result concerning adaptation is especially notable because Theorem \ref{thm:alpha_adapt} asserts the statistician pays the ultimate cost in the rate. 
No test can achieve a faster rate than that given by the rough exponent \(\alpha_1\) for the testing problem (\ref{problem:alpha_adapt0})-(\ref{problem:alpha_adapt1}). Theorem \ref{thm:alpha_adapt} is quite surprising given existing articles in the literature concerning adaptive testing and quadratic functional estimation which identify sharp costs which are only logarithmic \cite{efromovich_optimal_1996,cai_optimal_2006,spokoiny_adaptive_1996}. For example, Spokoiny \cite{spokoiny_adaptive_1996} establishes, among other results, that goodness-of-fit testing of \(\alpha\)-smooth Sobolev functions without knowledge of \(\alpha\) exhibits the sharp (squared) rate \(\left(\frac{n}{\sqrt{\log\log n}}\right)^{-\frac{4\alpha}{4\alpha+1}}\). The cost paid here for the ignorance of \(\alpha\) is very mild, whereas Theorem \ref{thm:alpha_adapt} shows an extremely large cost. Conceptually, the key difference in (\ref{problem:alpha_adapt0})-(\ref{problem:alpha_adapt1}) compared to, say, \cite{spokoiny_adaptive_1996} is that the null hypothesis depends on the H\"{o}lder exponent. Specifically, it may not agree with the alternative hypothesis. The effect of not knowing \(\alpha\) can be seen by examining the test in Theorem \ref{thm:beta_zero_upper}. When \(\alpha\) is not known, it is not clear how to choose the rejection threshold. Intuitively, the statistician is trapped into using the worst case \(\alpha = \alpha_1\). To the best of our knowledge, the brutal cost established in Theorem \ref{thm:alpha_adapt} is the first of its kind concerning adaptive testing. To illustrate the impossibility of adaptation, a numerical simulation is presented in the supplementary material.

\section{Heteroskedasticity across the design}\label{section:discrete_loss}
In some situations, smoothness of \(V\) may be reasonably assumed but the behavior of the variance function between design points may still be of no concern as in Section \ref{section:profile}. Specifically, the model (\ref{model}) is of interest, but the problem formulation of interest is different from (\ref{problem:var0})-(\ref{problem:var1}). Define the parameter spaces 
\begin{align*}
    \mathcal{D}_0 &= \left\{V : [0, 1] \to [0, \infty) : V(x_i) = \sigma^2 \text{ for all } i \text{ for some } 0 \leq \sigma^2 \leq M\right\}, \\
    \mathcal{D}_{1, \beta}(\varepsilon) &= \left\{ V \in \mathcal{H}_\beta : V \geq 0 \text{ and } \sqrt{\frac{1}{n+1} \sum_{i=0}^{n} \left(V(x_i) - \bar{V}_n\right)^2} \geq \varepsilon \right\}
\end{align*}
where \(\bar{V}_n = \frac{1}{n+1} \sum_{i=0}^{n} V(x_i)\) is the average variance across the design points. Throughout, \(M\) is a sufficiently large constant. Note the heteroskedasticity is measured only with respect to the design points; the behavior of \(V\) between the design points is immaterial. The testing problem of interest is formally given by 
\begin{align}
    H_0 &: V \in \mathcal{D}_0 \text{ and } f \in \mathcal{H}_\alpha, \label{problem:var_discrete_0}\\
    H_1 &: V \in \mathcal{D}_{1,\beta}(\varepsilon) \text{ and } f \in \mathcal{H}_\alpha. \label{problem:var_discrete_1}
\end{align}
In model (\ref{model}), the noise is only assumed to satisfy \(E(\xi_i) = 0, E(\xi_i^2) = 1\), and \(E(\xi_i^4) < \infty\). As hinted at in Section \ref{section:profile}, knowledge/ignorance of the noise distribution turns out to affect the minimax rates. The two settings are handled separately. 

\subsection{Known noise distribution}\label{section:discrete_known_noise}
In this section, we assume the noise distribution is Gaussian. The minimax separation rate for testing (\ref{problem:var_discrete_0})-(\ref{problem:var_discrete_1}) is 
\begin{equation}\label{rate:discrete_known_noise}
    \varepsilon^*(\alpha, \beta)^2 \asymp n^{-4\alpha} + n^{-\left(\frac{1}{2} \vee \frac{4\beta}{4\beta+1}\right)}
\end{equation}
for \(\alpha > 0\) and \(\beta \in (0, \frac{1}{2})\). Noting the phase transition at \(\beta = \frac{1}{4}\), the minimax rate simplifies to \(n^{-4\alpha} + n^{-1/2}\) for \(\beta < \frac{1}{4}\) and \(n^{-4\alpha} + n^{-\frac{4\beta}{4\beta+1}}\) for \(\beta \geq \frac{1}{4}\).

\subsubsection{Upper bound}\label{section: upper bound noise known}
The rate (\ref{rate:discrete_known_noise}) for the testing problem (\ref{problem:var_discrete_0})-(\ref{problem:var_discrete_1}) matches the rate (\ref{rate:testing}) for testing (\ref{problem:var0})-(\ref{problem:var1}) when $\frac{1}{4} \leq \beta < \frac{1}{2}$. The optimal test of Theorem \ref{theorem: optimal test beta>1/4} achieving (\ref{rate:testing}) does not rely on any information about the noise distribution. For \(\beta \geq \frac{1}{4}\), it turns out the same statistic (\ref{def: testing_statistic beta>1/4}) can be used to test (\ref{problem:var_discrete_0})-(\ref{problem:var_discrete_1}) even though the separation is defined across the design points rather than through the \(L^2\) norm as in (\ref{problem:var0})-(\ref{problem:var1}). However, for $\beta<\frac{1}{4}$ the rate (\ref{rate:discrete_known_noise}) is faster than (\ref{rate:testing}) and so a different approach is needed. 

The testing problem (\ref{problem:var0_profile})-(\ref{problem:var1_profile}), which deals with an arbitrary variance profile, shares the same separation metric as (\ref{problem:var_discrete_0})-(\ref{problem:var_discrete_1}) and the rate (\ref{rate:betazero}) matches (\ref{rate:discrete_known_noise}). Moreover, the test statistic $\hat{S}$, defined in (\ref{def:Shat}), uses the fact that the noise is Gaussian. It is shown the statistic $\hat{S}$ also furnishes a test achieving (\ref{rate:discrete_known_noise}) for the problem (\ref{problem:var_discrete_0})-(\ref{problem:var_discrete_1}) when $\beta<\frac{1}{4}$.

\begin{theorem}\label{theorem: optimal test discrete known noise}
  Suppose \(\alpha > 0\) and \(0 < \beta < \frac{1}{2}\). Fix \(\eta \in (0, 1)\). Then, there exists \(C_\eta', C_\eta > 0\) depending only on \(\eta\) such that for all \(C > C_\eta\), we have 
  
  \begin{align*}
      \sup_{\substack{f \in \mathcal{H}_\alpha, \\ V \in \mathcal{D}_0}} P_{f,V}\left\{ \hat{T} > C_\eta' \zeta^2 \right\} 
      + \sup_{\substack{f \in \mathcal{H}_\alpha, \\ V \in \mathcal{D}_{1,\beta}(C\zeta)}} P_{f,V}\left\{\hat{T} \leq C_\eta' \zeta^2\right\} \leq \eta
  \end{align*}
  where $\zeta = n^{-2\alpha} + n^{-\left(\frac{1}{4} \vee \frac{2\beta}{4\beta+1}\right)}$. Here, for \(\beta < \frac{1}{4}\), we set \(\hat{T} = \hat{S}\) where \(\hat{S}\) is defined by (\ref{def:Shat}), and for \(\beta \geq \frac{1}{4}\) we set \(\hat{T}\) by (\ref{def: testing_statistic beta>1/4}). 
\end{theorem}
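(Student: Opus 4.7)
The plan is to split based on whether \(\beta \geq \frac{1}{4}\) or \(\beta < \frac{1}{4}\), since the definition of \(\hat{T}\) bifurcates at this threshold, and in each regime reduce the problem to a theorem already proved in this paper.

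For \(\beta \geq \frac{1}{4}\), where \(\hat{T}\) is the kernel-based statistic (\ref{def: testing_statistic beta>1/4}), the key observation is that the law of the data \(\{Y_i\}_{i=0}^n\) in (\ref{model}) depends on \(V\) only through the values at the design points. Thus, under the null \(\mathcal{D}_0\), the joint law coincides with that produced by the homoskedastic constant function \(V \equiv \sigma^2 \in \mathcal{V}_0\), so the Type I control in Theorem \ref{theorem: optimal test beta>1/4} transfers verbatim. For the Type II control under \(V \in \mathcal{D}_{1,\beta}(C\zeta)\), I would translate the discrete separation to the \(L^2\) separation: writing \(\Delta_n^2 = \frac{1}{n+1}\sum_{i=0}^{n}(V(x_i) - \bar{V}_n)^2\), a standard Riemann-sum argument using the \(\beta\)-Hölder smoothness of \(V\) yields \(\|V - \bar{V}\mathbf{1}\|_2^2 = \Delta_n^2 + O(n^{-\beta}\Delta_n) + O(n^{-2\beta})\), whence \(\|V - \bar{V}\mathbf{1}\|_2^2 \geq \tfrac{1}{2}\Delta_n^2 - C' n^{-2\beta}\) by AM-GM. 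Since for \(\beta \geq \frac{1}{4}\) we have \(\zeta^2 \gtrsim n^{-\frac{4\beta}{4\beta+1}} \gtrsim n^{-2\beta}\), taking \(C\) large enough ensures \(\|V - \bar{V}\mathbf{1}\|_2 \geq c\zeta\), placing \(V\) into \(\mathcal{V}_{1,\beta}(c\zeta)\). Theorem \ref{theorem: optimal test beta>1/4} then delivers the Type II control (noting that the \(n^{-\beta}\) summand appearing in \(\zeta\) there is itself dominated by \(n^{-\frac{2\beta}{4\beta+1}}\) precisely when \(\beta \geq \frac{1}{4}\), so the two rates coincide).

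For \(\beta < \frac{1}{4}\), we have \(\hat{T} = \hat{S}\) from (\ref{def:Shat}), and the rate \(\zeta = n^{-2\alpha} + n^{-1/4}\) matches the arbitrary-profile rate in Theorem \ref{thm:beta_zero_upper}. The same observation that the law of \(\{Y_i\}\) depends on \(V\) only through the profile \((V(x_0),\dots,V(x_n))\) shows that any \(V \in \mathcal{D}_0\) induces a profile in \(\Sigma_0\), while any \(V \in \mathcal{D}_{1,\beta}(C\zeta)\) induces a profile in \(\Sigma_1(c C\zeta)\) after absorbing the harmless discrepancy between \(\frac{1}{n}\)- and \(\frac{1}{n+1}\)-averaging. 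Theorem \ref{thm:beta_zero_upper} then applies directly to yield the two-sided error control. The Hölder smoothness of \(V\) is not used in this regime beyond the boundedness already encoded in \(\Sigma_1\), which aligns with the broader theme of Section \ref{section:profile} that no smoothness is needed once the noise is known to be Gaussian.

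The only nonroutine step is the \(L^2\)-to-discrete translation in the \(\beta \geq \frac{1}{4}\) regime, and its real content is just the inequality \(\zeta^2 \gtrsim n^{-2\beta}\), which holds precisely when \(\beta \geq \frac{1}{4}\) and lets the Riemann-sum remainder \(n^{-2\beta}\) be absorbed for \(C\) large. Beyond this, the argument is a two-case reduction to existing results with no fresh moment or bias calculations required.
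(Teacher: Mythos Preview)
Your proposal is correct and, for \(\beta < \tfrac{1}{4}\), matches the paper exactly: both reduce to Theorem~\ref{thm:beta_zero_upper} via the observation that the law of the data depends on \(V\) only through the profile at the design points.

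For \(\beta \geq \tfrac{1}{4}\) you take a genuinely different (and somewhat cleaner) route. The paper stays inside the discrete framework throughout: it goes back to the MSE bound of Proposition~\ref{prop: MSE of T_hat beta>1/4}, which already features \(\|W - \bar{W}_n\mathbf{1}_n\|_2^2/n^2\), and bounds this directly by \(\tfrac{1}{n^2}\sum_i (V(x_i)-\bar V_n)^2\) rather than by \(\|V-\bar V\mathbf{1}\|_2^2/n\) as in Theorem~\ref{theorem: MSE of T_hat}; it then reruns the Chebyshev argument of Theorem~\ref{theorem: optimal test beta>1/4} with the discrete separation playing the role of the \(L^2\) separation (which also tacitly needs a discrete analogue of Proposition~\ref{prop: V-V bar less than T}). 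Your approach instead converts the discrete separation into an \(L^2\) separation via the Riemann-sum estimate \(\|V-\bar V\mathbf{1}\|_2^2 \geq \tfrac12\Delta_n^2 - C' n^{-2\beta}\), observes that \(n^{-2\beta} \lesssim \zeta^2\) exactly when \(\beta \geq \tfrac14\), and then invokes Theorem~\ref{theorem: optimal test beta>1/4} as a black box. What your route buys is modularity: no re-derivation of the MSE bound or of the signal inequality is needed. What the paper's route buys is that it never leaves the design-point world, which is natural given that the statistic is built from design-point values alone, and it avoids the Riemann-sum detour entirely.
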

\noindent In conclusion, Theorem \ref{theorem: optimal test discrete known noise} provides a test achieving (\ref{rate:discrete_known_noise}) for the testing problem  (\ref{problem:var_discrete_0})-(\ref{problem:var_discrete_1}).

\subsubsection{Lower bound}
As in Section \ref{section:profile_lower_bound}, the \(n^{-4\alpha}\) term in (\ref{rate:discrete_known_noise}) can be obtained by essentially the same proof as the proof of Proposition \ref{prop:nuisance_lbound}. Likewise, for \(\beta > \frac{1}{4}\), the term \(n^{-\frac{4\beta}{4\beta+1}}\) can be established by using essentially the same argument as that of Proposition \ref{prop:nonparam_lbound} in Section \ref{section:lower_bounds}. As the necessary modifications are slight, technical, and easily carried out, we will omit them. We focus only on establishing the term \(n^{-1/2}\) in the regime \(\beta \leq \frac{1}{4}\). 

It turns out the construction used in Section \ref{section:profile_lower_bound} can be used, even though the setting of an arbitrary variance profile corresponds to \(\beta = 0\). Specifically, for \(\beta \leq \frac{1}{4}\), a prior over \(\beta\)-H\"{o}lder variance functions can be constructed such that \(V(x_i)\) is distributed according to the prior in the alternative hypothesis in Section \ref{section:profile_lower_bound}. Since only data at the design points are observed, the lower bound argument proceeds exactly in the same manner. The following proposition gives a formal statement and proof details are given in Section \ref{section:discrete_known_noise}.
\begin{proposition}\label{prop:discrete_known_noise_sqrtn}
    Suppose \(\alpha > 0\) and \(0 < \beta \leq \frac{1}{4}\). If \(\eta \in (0, 1)\), then there exists \(c_\eta > 0\) not depending on \(n\) such that for all \(0 < c < c_\eta\) and \(n\) sufficiently large depending only on \(\eta\), we have 
    \begin{equation*}
        \inf_{\varphi}\left\{ \sup_{\substack{f \in \mathcal{H}_\alpha, \\ V \in \mathcal{D}_0}} P_{f, V}\left\{\varphi = 1\right\} + \sup_{\substack{f \in \mathcal{H}_\alpha, \\ V \in \mathcal{D}_{1,\beta}(cn^{-1/4})}} P_{f,V}\left\{\varphi = 0\right\}  \right\} \geq 1-\eta. 
    \end{equation*}
\end{proposition}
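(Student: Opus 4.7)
The plan is to reduce to the same $\chi^2$-divergence computation that proves Theorem \ref{thm:beta_zero_lower}, by constructing a prior on $\beta$-H\"{o}lder variance functions whose design-point evaluations follow the Rademacher two-point law used in Section \ref{section:profile_lower_bound}. Take $f \equiv 0$ under both hypotheses. Under $H_0$, set $V \equiv 1 \in \mathcal{D}_0$. Under $H_1$, fix a $\beta$-H\"{o}lder bump $\psi : \R \to [0, 1]$ with $\psi(0) = 1$ and $\supp(\psi) \subset [-1/2, 1/2]$, draw i.i.d. Rademacher variables $\epsilon_0, \ldots, \epsilon_n$, and set
\[
V(x) = 1 + c n^{-1/4} \sum_{i=0}^{n} \epsilon_i\, \psi\bigl(3n(x - x_i)\bigr).
\]

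The rescaled bumps $\psi(3n(x-x_i))$ have pairwise disjoint supports, each contained strictly inside $(x_{i-1}, x_{i+1})$, so the only contribution to $V(x_i)$ is from the $i$-th bump: $V(x_i) = 1 + \epsilon_i c n^{-1/4}$. By considering the cases where $x, y$ lie in the same bump, in different bumps, or outside all bumps (using that each bump vanishes at the boundary of its support), the H\"{o}lder seminorm of $V$ is bounded by a universal multiple of $c n^{\beta - 1/4}$. For $\beta \leq \tfrac{1}{4}$, this factor is at most $c$, so $V \in \mathcal{H}_\beta(M)$ for a sufficiently small choice of $c$; positivity and the sup-norm bound follow trivially for $n$ large enough.

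Next, for the separation, write $\frac{1}{n+1}\sum_{i=0}^{n}(V(x_i) - \bar V_n)^2 = c^2 n^{-1/2}(1 - \bar \epsilon_n^2)$ where $\bar \epsilon_n = \frac{1}{n+1}\sum_i \epsilon_i$. Standard Rademacher concentration gives $\bar \epsilon_n^2 \leq 1/2$ with probability at least $1 - \eta/3$ for $n$ sufficiently large, placing $V$ in $\mathcal{D}_{1,\beta}(c' n^{-1/4})$ with that probability. Since the observations depend on $V$ only through $(V(x_0), \ldots, V(x_n))$, and by construction this vector has exactly the Rademacher two-point profile used in Theorem \ref{thm:beta_zero_lower}, the marginal law of the data under the Bayes alternative coincides with the one in that proof. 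The $\chi^2$-divergence from the null marginal is thus bounded by the same expression already shown to be $o(1)$ for small $c$, which via the standard two-point inequality relating testing risk to $\chi^2$-divergence yields the claimed lower bound on the minimax testing risk (after a union bound with the event that $V$ lies in the alternative).

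The only substantive obstacle is verifying the H\"{o}lder constraint for the randomized bump construction; this is where the assumption $\beta \leq \tfrac{1}{4}$ is essential, via the inequality $n^{\beta - 1/4} \leq 1$. Beyond this, the argument is essentially a transcription of the proof of Theorem \ref{thm:beta_zero_lower}, which is why Section \ref{section:beta_zero_lower_proof} can be reused.
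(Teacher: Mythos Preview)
Your proposal is correct and follows essentially the same approach as the paper: construct a prior on $\beta$-H\"{o}lder variance functions via disjoint random-sign bumps centered at the design points so that $(V(x_i))_i$ has the Rademacher two-point profile from Theorem~\ref{thm:beta_zero_lower}, verify the H\"{o}lder condition using $\beta\le\tfrac14$, condition on the high-probability separation event, and then reuse the $\chi^2$ bound from Section~\ref{section:beta_zero_lower_proof}. The only cosmetic difference is that the paper uses the explicit tent function $g(x)=(1-2n|x|)\mathbbm{1}_{\{|x|\le 1/(2n)\}}$ at scale $2n$, while you use a generic $\beta$-H\"{o}lder bump $\psi$ at scale $3n$; both yield the same design-point law and the same H\"{o}lder bound of order $cn^{\beta-1/4}$.
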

\noindent To give a sketch of the argument, consider the Bayes testing problem 
\begin{align*}
    H_0 &: f \equiv 0 \text{ and } V = \mathbf{1}, \\
    H_1 &: f \equiv 0 \text{ and } V \sim \pi
\end{align*}
where the prior \(\pi\) is defined as follows. Draw \(R_0,...,R_n \overset{iid}{\sim} \Rademacher\left(\frac{1}{2}\right)\) and set, for \(0 \leq x \leq 1\),
\begin{equation*}
    V(x) = 1 + \sum_{i=0}^{n} R_i cn^{-1/4} g\left(x - \frac{i}{n}\right)
\end{equation*}
where \(g(x) = (1 - 2n|x|)\mathbbm{1}_{\{|x| \leq \frac{1}{2n}\}}\). It is straightforward to verify \(V \geq 0\) for large \(n\) and \(V \in \mathcal{H}_\beta\) for \(\beta \leq \frac{1}{4}\). Furthermore, note \(V(x_i) = 1 + R_i cn^{-1/4} \sim \frac{1}{2}\delta_{1 + cn^{-1/4}} + \frac{1}{2}\delta_{1-cn^{-1/4}}\), which is precisely the distribution used Section \ref{section:profile_lower_bound} to prove Theorem \ref{thm:beta_zero_lower}. Thus, the same argument can be carried out to obtain Proposition \ref{prop:discrete_known_noise_sqrtn}.

\subsection{Unknown noise distribution}\label{section:discrete_unknown_noise}
In this section, suppose the noise distribution \(P_\xi\) is unknown to the statistician. Define the space of possible noise distributions 
\begin{equation}\label{space:Xi}
    \Xi = \left\{ P_\xi \text{ probability distribution }: \xi \sim P_\xi \implies E(\xi) = 0, E(\xi^2) = 1, \text{ and } E(\xi^4) \leq C_\xi\right\}  
\end{equation}
where \(C_\xi > 0\) is a large universal constant. To incorporate the ignorance of the noise distribution, the minimax testing risk is modified to take supremum over \(P_\xi \in \Xi\), 
\begin{equation*}
    \mathcal{R}(\varepsilon) = \inf_{\varphi}\left\{ \sup_{\substack{f \in \mathcal{H}_\alpha, \\ V \in \mathcal{D}_0, \\ P_\xi \in \Xi}} P_{f, V, P_\xi}\left\{\varphi = 1\right\} + \sup_{\substack{f \in \mathcal{H}_\alpha, \\ V \in \mathcal{D}_{1,\beta}(\varepsilon),\\P_\xi \in \Xi}} P_{f, V, P_\xi}\left\{\varphi = 0\right\}\right\}. 
\end{equation*}
The minimax separation rate is defined using this modified version of the minimax testing risk. In this section, it will be shown 
\begin{equation}\label{rate:discrete_unknown_noise}
    \varepsilon^*(\alpha, \beta)^2 \asymp n^{-4\alpha} + n^{-\frac{4\beta}{4\beta+1}} + n^{-2\beta}
\end{equation}
for \(\alpha > 0\) and \(\beta \in (0, \frac{1}{2})\). Interestingly, despite the separation between \(\mathcal{D}_0\) and \(\mathcal{D}_{1,\beta}(\varepsilon)\) being measured only with respect to the design points, the minimax separation rate (\ref{rate:discrete_unknown_noise}) matches the rate (\ref{rate:testing}) for the problem (\ref{problem:var0})-(\ref{problem:var1}) where the separation is measured using the \(L^2\) norm. Notably, the term \(n^{-2\beta}\) appears to admit different interpretations in (\ref{rate:discrete_unknown_noise}) and (\ref{rate:testing}). As discussed in Section \ref{section:main_contribution}, the \(n^{-2\beta}\) term in (\ref{rate:testing}) can be attributed to the fixed design nature of (\ref{model}). To reiterate, even if \(V\) is constant across the design points, it may be nonconstant between design points in (\ref{problem:var0})-(\ref{problem:var1}). Since no observations are made between design points, there is inescapable error. On the other hand, in (\ref{rate:discrete_unknown_noise}) this explanation is unsatisfactory because the behavior of \(V\) between design points is irrelevant. Rather, comparing (\ref{rate:discrete_unknown_noise}) to (\ref{rate:discrete_known_noise}) suggests the \(n^{-2\beta}\) term appears due to ignorance of the noise distribution.

\subsubsection{Upper bound}

In view of the fact that (\ref{rate:discrete_unknown_noise}) matches (\ref{rate:testing}), it is natural to anticipate the test statistic from Theorem \ref{theorem: optimal test beta>1/4} will achieve (\ref{rate:discrete_unknown_noise}) for testing (\ref{problem:var_discrete_0})-(\ref{problem:var_discrete_1}). The following theorem establishes this to be the case.

\begin{theorem}\label{theorem: optimal test discrete unknown noise}
  Suppose \(\alpha > 0\) and \(0 < \beta < \frac{1}{2}\). Fix \(\eta \in (0, 1)\). Then, there exists \(C_\eta', C_\eta > 0\) depending only on \(\eta\) such that for all \(C > C_\eta\), we have 
  
  \begin{align*}
      \sup_{\substack{f \in \mathcal{H}_\alpha, \\ V \in \mathcal{D}_0, \\ P_\xi \in \Xi}} P_{f, V, P_\xi}\left\{ \hat{T} > C_\eta' \zeta^2 \right\} 
      + \sup_{\substack{f \in \mathcal{H}_\alpha, \\ V \in \mathcal{D}_{1,\beta}(C\zeta), \\ P_\xi \in \Xi}} P_{f, V, P_\xi}\left\{\hat{T} \leq C_\eta' \zeta^2\right\} \leq \eta
  \end{align*}
  where $\zeta = n^{-2\alpha} + n^{-\beta} + n^{-\frac{2\beta}{4\beta+1}}$ and \(\hat{T}\) is defined by (\ref{def: testing_statistic beta>1/4}). 
\end{theorem}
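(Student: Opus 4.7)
The plan is to reuse the same test statistic $\hat{T}$ from (\ref{def: testing_statistic beta>1/4}) with the bandwidth $h = C_h n^{-(2/(4\beta+1)\wedge 1)}$ from Theorem \ref{theorem: MSE of T_hat}, and to combine the existing MSE bound with a discrete analog of Proposition \ref{prop: V-V bar less than T}. The statistic $\hat{T}$ is a quadratic functional of the squared first-order differences $R_i^2$ and uses no feature of the noise distribution beyond its moments, so it transfers naturally to the unknown-noise setting.

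First, I would verify that the proof of Theorem \ref{theorem: MSE of T_hat} (equivalently Proposition \ref{prop: MSE of T_hat beta>1/4}) goes through unchanged, uniformly over $P_\xi \in \Xi$. Inspection of those calculations shows that the bias term is driven entirely by $E(R_i^2) = W_i + \delta_i^2$ (via the discrete convolution argument adapted from \cite{gine_simple_2008}) and is hence distribution-free. The variance term decomposes, using the gap condition $|i - j| \geq 2$ enforcing pairwise independence of $R_i$ and $R_j$, into expressions involving at most fourth moments of $R_i$, which are uniformly bounded in terms of $M$ and $C_\xi$. Thus
\[
    E_{f,V,P_\xi}\!\left(|\hat{T} - T|^2\right) \lesssim n^{-8\alpha} + n^{-4\beta} + n^{-\frac{8\beta}{4\beta+1}} + \frac{\|V - \bar{V}\mathbf{1}\|_2^2}{n}
\]
holds uniformly over $P_\xi \in \Xi$, with constants depending only on $M, C_\xi, \alpha, \beta$.

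Second, I would establish the discrete analog
\[
    \frac{1}{n+1}\sum_{i=0}^{n}\left(V(x_i) - \bar{V}_n\right)^2 \lesssim T + n^{-2(\beta \wedge 1)} + n^{-4(\alpha \wedge 1)}
\]
by repeating the algebraic manipulation in the proof of Proposition \ref{prop: V-V bar less than T}. Namely, write
\[
    2V(x_i) - 2\bar{V}_n = (2V(x_i) - W_i) + (W_i + \delta_i^2 - \bar{W}_n - \overline{\delta^2_n}) + (\bar{W}_n - 2\bar{V}_n) + (\overline{\delta^2_n} - \delta_i^2),
\]
square and average, and apply $(a+b)^2 \lesssim a^2 + b^2$ together with $|V(x_{i+1}) - V(x_i)| \lesssim n^{-(\beta \wedge 1)}$, $|\delta_i| \lesssim n^{-(\alpha \wedge 1)}$, and the elementary estimate $|\bar{W}_n - 2\bar{V}_n| \lesssim n^{-1}$ that arises from the boundary contributions when rewriting $\bar{W}_n$ as a sum over $V(x_i)$.

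Combining these ingredients follows the pattern of Theorem \ref{theorem: optimal test beta>1/4}. Under $H_0$, the condition $V(x_i) \equiv \sigma^2$ makes $W_i$ constant in $i$, whence $T \lesssim n^{-4(\alpha \wedge 1)} \lesssim \zeta^2$, and Markov's inequality applied to the MSE bound controls $\hat{T}$ below a threshold of order $\zeta^2$ with probability at least $1 - \eta/2$. Under $H_1$, the discrete analog yields $T \gtrsim C^2\zeta^2 - O(\zeta^2) \gtrsim \zeta^2$ for sufficiently large $C$; combining with Proposition \ref{prop: V-V bar less than T} to bound $\|V - \bar{V}\mathbf{1}\|_2^2 \lesssim T + \zeta^2$ inside the MSE term, Markov places $\hat{T}$ above the threshold with probability at least $1 - \eta/2$. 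The main obstacle is really the first step: carefully tracking the noise assumptions in the proofs of Proposition \ref{prop: MSE of T_hat beta>1/4} and Theorem \ref{theorem: MSE of T_hat} to confirm that only finite fourth moments of $\xi$ are needed; the remaining steps are close adaptations of arguments already developed in Section \ref{section: upper bound}.
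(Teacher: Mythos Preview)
Your proposal is correct and follows essentially the same route as the paper, which simply says ``the proof is essentially the proof of Theorem \ref{theorem: optimal test discrete known noise}'' and omits the details; that theorem in turn reuses the MSE bound from Proposition \ref{prop: MSE of T_hat beta>1/4}/Theorem \ref{theorem: MSE of T_hat}, replaces the $L^2$ signal inequality by its discrete analog, and argues as in Theorem \ref{theorem: optimal test beta>1/4}. The only minor difference is that the paper bounds the residual variance term directly via $\|W-\bar W_n\mathbf 1_n\|_2^2/n^2 \lesssim n^{-2}\sum_i (V(x_i)-\bar V_n)^2 + \text{lower order}$, whereas you route it through Proposition \ref{prop: V-V bar less than T} and the continuous $\|V-\bar V\mathbf 1\|_2^2$; both work since $V\in\mathcal H_\beta$.
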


\noindent In summary, Theorem \ref{theorem: optimal test discrete unknown noise} provides a test for (\ref{problem:var_discrete_0})-(\ref{problem:var_discrete_1}) achieving (\ref{rate:discrete_unknown_noise}) where the noise distribution is unknown.

\subsubsection{Lower bound}
The rate (\ref{rate:discrete_unknown_noise}) simplifies to \(n^{-4\alpha} + n^{-2\beta}\) for \(\beta < \frac{1}{4}\) and \(n^{-4\alpha} + n^{-\frac{4\beta}{4\beta+1}}\) for \(\beta \geq \frac{1}{4}\). Note the rate \(n^{-4\alpha} + n^{-\frac{4\beta}{4\beta+1}}\) matches the rate (\ref{rate:discrete_known_noise}) for \(\beta \geq \frac{1}{4}\) where the noise distribution is known. Therefore, the lower bound arguments from Section \ref{section:discrete_known_noise} carry over directly in the case \(\beta \geq \frac{1}{4}\). Moreover, note that the argument for the \(n^{-4\alpha}\) term carries over for all values of \(\beta > 0\). Thus, it only remains to prove the term \(n^{-2\beta}\) for \(\beta < \frac{1}{4}\).

As mentioned earlier, a comparison of (\ref{rate:discrete_unknown_noise}) to (\ref{rate:discrete_known_noise}) suggests the \(n^{-2\beta}\) is attributable to the statistician's ignorance of the noise distribution. Hence, the lower bound construction requires choosing different noise distributions in the the null and alternative hypotheses. We employ a construction similar to that in Section \ref{section:profile_triviality}.

A sketch of the construction is given here. Define the function \(g : \R \to \R\) with \(g(x) = (1 - 2n|x|)\mathbbm{1}_{\left\{|x| \leq \frac{1}{2n}\right\}}\). Define the prior distribution \(\pi\) where a draw \(V \sim \pi\) is obtained by setting, for \(0 \leq x \leq 1\),
\begin{equation*}
    V(x) = 1 + \sum_{i=0}^{n} R_i n^{-\beta} g\left(x - \frac{i}{n}\right)
\end{equation*}
where \(R_0,...,R_n \overset{iid}{\sim} \Rademacher\left(\frac{1}{2}\right)\). It is easily checked \(V \in \mathcal{H}_\beta\) and \(V \geq 0\) for \(n\) large. Note further \(V(x_i) = 1 + R_i n^{-\beta}\), and so \(\frac{1}{n+1} \sum_{i=0}^{n} \left(V_i - \bar{V}_n\right)^2 \gtrsim n^{-2\beta}\) with high probability where \(\bar{V}_n = \frac{1}{n+1}\sum_{i=0}^{n+1} V(x_i)\). With \(\pi\) in hand, consider 
\begin{align*}
    H_0 &: f \equiv 0, V = \mathbf{1}, P_\xi = \frac{1}{2}N\left(0, 1 + n^{-\beta}\right) + \frac{1}{2}N\left(0, 1 - n^{-\beta}\right), \\
    H_1 &: f \equiv 0, V \sim \pi, P_\xi = N(0, 1).
\end{align*}
Observe \(V_i^{1/2}\xi_i\) has the same distribution under the null and alternative hypotheses. Since \(f \equiv 0\), we have that the data satisfy \(Y_i = V_i^{1/2}\xi_i\) and so the hypotheses are indistinguishable, yielding the lower bound \(\varepsilon^*(\alpha, \beta)^2 \gtrsim n^{-2\beta}\). This sketch is not a complete proof since \(\pi\) is not actually supported on \(\mathcal{D}_{1,\beta}(n^{-\beta})\). However, it is nearly supported on this space and slight technical modifications can rectify this issue. The following proposition gives a rigorous statement of this derivation and is proved in the supplementary material.

\begin{proposition}\label{prop:discrete_unknown_noise_2beta}
    Suppose \(\alpha > 0\) and \(0 < \beta < \frac{1}{4}\). If \(\eta \in (0, 1)\), then there exists \(c_\eta > 0\) not depending on \(n\) such that for all \(0 < c < c_\eta\) and \(n\) sufficiently large depending only on \(\eta\), we have 
    \begin{equation*}
        \inf_{\varphi}\left\{ \sup_{\substack{f \in \mathcal{H}_\alpha, \\ V \in \mathcal{D}_0, \\ P_\xi \in \Xi}} P_{f, V, P_\xi} \left\{\varphi = 1\right\} + \sup_{\substack{f \in \mathcal{H}_\alpha, \\ V \in \mathcal{D}_{1, \beta}(cn^{-\beta}), \\ P_\xi \in \Xi}} P_{f, V, P_\xi} \left\{\varphi = 0\right\}\right\} \geq 1-\eta. 
    \end{equation*}
\end{proposition}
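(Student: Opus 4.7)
The plan is to follow the sketch presented in the excerpt: construct a Bayes testing problem where the marginal law of the data is identical under the null and alternative, then invoke the standard reduction from minimax to Bayes testing. Formally, take the null to be $f \equiv 0$, $V \equiv \mathbf{1}$, and $P_\xi = \frac{1}{2}N(0, 1+n^{-\beta}) + \frac{1}{2}N(0, 1-n^{-\beta})$, and take the alternative prior $\pi$ defined by drawing $R_0, \dots, R_n \overset{iid}{\sim} \Rademacher(1/2)$ and setting
\begin{equation*}
    V(x) = 1 + \sum_{i=0}^{n} R_i n^{-\beta} g\left(x - \tfrac{i}{n}\right), \qquad g(x) = (1-2n|x|)\mathbbm{1}_{\{|x| \leq 1/(2n)\}},
\end{equation*}
with $f \equiv 0$ and $\xi_i \overset{iid}{\sim} N(0,1)$. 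Verifying $P_\xi \in \Xi$ under $H_0$ is routine (mean zero, unit variance, bounded fourth moment since $n^{-\beta} < 1$).

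The first step is to check that $\pi$ is essentially supported on $\mathcal{D}_{1,\beta}(cn^{-\beta})$ so that the prior mass which fails to land in the alternative is negligible. Non-negativity of $V$ follows because at each $x$ at most one $g(x - i/n)$ is nonzero with $|g| \leq 1$, giving $V(x) \geq 1 - n^{-\beta} > 0$ for $n$ large. For the Hölder property, for $|x-y| \leq 1/n$ at most a bounded number of hats contribute, each with derivative $2n \cdot n^{-\beta}$, so $|V(x) - V(y)| \lesssim n^{1-\beta}|x-y| \leq n^{1-\beta}|x-y|^{1-\beta} \cdot |x-y|^\beta \leq |x-y|^\beta$; for $|x-y| \geq 1/n$, the trivial bound $|V(x)-V(y)| \leq 2n^{-\beta} \leq 2|x-y|^\beta$ applies. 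Thus $V \in \mathcal{H}_\beta(M)$ for the large universal constant $M$. For the separation, $V(x_i) = 1 + R_i n^{-\beta}$ yields
\begin{equation*}
    \frac{1}{n+1}\sum_{i=0}^{n}(V(x_i) - \bar V_n)^2 = n^{-2\beta}(1 - \bar R_n^2), \qquad \bar R_n = \tfrac{1}{n+1}\sum_{i=0}^n R_i,
\end{equation*}
and Hoeffding's inequality gives $P(|\bar R_n| > 1/2) \leq 2e^{-(n+1)/8}$. Hence on an event $A$ with $P(A) \geq 1 - \eta/2$, the separation is $\geq (\sqrt{3}/2) n^{-\beta} \geq cn^{-\beta}$ for any $c \leq \sqrt{3}/2$.

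The crux of the argument is that the marginal distribution of $(Y_1,\dots,Y_n)$ is identical under both hypotheses. Under $H_0$, the $Y_i$ are iid from the two-component Gaussian mixture $\frac{1}{2}N(0, 1+n^{-\beta}) + \frac{1}{2}N(0, 1-n^{-\beta})$. Under the Bayes alternative, conditional on $R_i$ we have $Y_i \sim N(0, 1 + R_i n^{-\beta})$, and marginalizing over $R_i \sim \Rademacher(1/2)$ produces exactly the same mixture; independence of the $R_i$ gives independence of the $Y_i$. Thus $d_{TV}(P_{H_0}, P_\pi^Y) = 0$, and the standard minimax-to-Bayes reduction gives
\begin{equation*}
    \inf_\varphi\Big\{P_{H_0}(\varphi = 1) + E_\pi\left[P_V(\varphi = 0)\mathbbm{1}_A\right]\Big\} \geq 1 - d_{TV}(P_{H_0}, P_\pi^Y) - P(A^c) \geq 1 - \eta/2.
\end{equation*}
Since $E_\pi[P_V(\varphi=0)\mathbbm{1}_A] \leq \sup_{V \in \mathcal{D}_{1,\beta}(cn^{-\beta})} P_{0,V,N(0,1)}(\varphi = 0)$, this furnishes the claimed lower bound for $n$ sufficiently large.

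The main obstacle is not the measure-matching step (which is exact by design of the two-component Gaussian mixture) but the verification that $V$ drawn from $\pi$ lies in $\mathcal{H}_\beta(M)$: the hat-function construction is tuned precisely to the critical scale $1/n$ and amplitude $n^{-\beta}$, and the Hölder condition must be split into short-range (Lipschitz-dominated) and long-range (boundedness-dominated) regimes. Everything else, including the Hoeffding concentration for the separation and the triangle inequality absorbing the event $A^c$, is standard.
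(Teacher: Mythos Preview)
Your proof is correct and follows essentially the same approach as the paper: construct a Rademacher-randomized hat-function prior on $V$ so that $V(x_i)\overset{iid}{\sim}\tfrac12\delta_{1+n^{-\beta}}+\tfrac12\delta_{1-n^{-\beta}}$, and match it under the null with the two-component Gaussian noise $\tfrac12 N(0,1+n^{-\beta})+\tfrac12 N(0,1-n^{-\beta})$ so that the data marginals coincide exactly. The only cosmetic differences are that the paper scales the amplitude by $\sqrt{2}c$ and conditions the prior on the high-probability separation event (using Chebyshev and the total-variation triangle inequality), whereas you keep the amplitude fixed, use Hoeffding, and absorb the bad event via an indicator in the Bayes risk; both treatments are standard and equivalent.
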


The intuition resembles the intuition discussed in Section \ref{section:profile_triviality}. Without knowledge of the noise distribution, the statistician faces difficulty in deciding whether perceived heteroskedasticity is actually due to ground truth heteroskedasticity or simply a fatter noise distribution. However, in contrast to Section \ref{section:profile_triviality}, \(V\) is guaranteed to be smooth. If the perceived variability is too great, the statistician can attribute it to the noise rather than \(V\). Thus, the triviality of Section \ref{section:profile_triviality} is avoidable. 

\section{Discussion}\label{section:discussion}
The fixed design nature of (\ref{model}) exerts some notable effects. First, the emergence of the term $n^{-2\beta}$ in (\ref{rate:testing}) directly stems from the use of fixed design. Since only responses at the fixed design points are available, the quadratic functional $||V-\bar{V}\mathbf{1}||_2^2$, which involves an integral over the unit interval, is approximated by a discrete sum across the design points, resulting in a cost of \(n^{-2\beta}\). We expect the \(n^{-2\beta}\) does not persist in a random design setting.

In a random design setting, it was recently shown \cite{shen_optimal_2020} the minimax estimation rate involves a term containing both \(\alpha\) and \(\beta\); there is a nontrivial interaction between the two H\"{o}lder exponents. In light of this result, we expect a nontrivial interaction should also appear in the minimax rate for heteroskedasticity testing under a random design analogue of (\ref{model}).

In the fixed design model (\ref{model}), our methodology applies only to the low-smoothness regime \(\beta \in (0, \frac{1}{2})\), which was left totally unaddressed by the previous work \cite{dette_testing_1998,dette_consistent_2002}. It is of clear practical interest to develop methodology to handle higher smoothness \(\beta \geq \frac{1}{2}\). We conjecture that the lower bounds we proved (which hold for all \(\alpha, \beta > 0\)) are sharp, and so it remains to construct a test delivering a matching upper bound. For the problem (\ref{problem:var0})-(\ref{problem:var1}), the main obstacle in extending the statistic developed in Section \ref{section: upper bound} is the presence of well-known boundary effects which can dominate for larger \(\beta\).

The literature contains works which study various hypothesis testing problems in heteroskedastic regression models from a minimax testing perspective. Though they do not address the heteroskedasticity testing problem we are interested in, it is informative to discuss them and highlight the differences. In the heteroskedastic Gaussian sequence model \(Y_j \overset{ind}{\sim} N(\theta_j, \sigma_j^2)\) for \(j \in \mathbb{N}\), Laurent et al. \cite{laurent_nonasymptotic_2012} consider the signal detection problem of testing \(H_0 : \theta = 0\) versus \(H_1: ||\theta|| \geq \varepsilon \text{ and } \theta \in \mathcal{F}\), where \(\mathcal{F} \subset \ell_2\) is a class of signals of interest (e.g. ellipsoid, \(\ell_p\) body, etc.). Laurent et al. assume the variances \(\{\sigma_j^2\}_{j=1}^{\infty}\) are known to the statistician and establish minimax separation rates by characterizing explicitly the rate's dependence on \(\{\sigma_j^2\}_{j=1}^{\infty}\). They directly consider the natural quadratic functional \(||\theta||^2\) when developing their methodology, and the tests they employ make careful use of the statistician's knowledge of the variances. The signal detection problem was also studied in \cite{ermakov_minimax_2004} under a formulation in a related heteroskedastic Gaussian white noise model. The recent work of Chhor et al. \cite{chhor_sparse_2024} considered finite-dimensional, sparse classes of signals in the heteroskedastic Gaussian sequence model of \cite{laurent_nonasymptotic_2012} and obtained sharp rates with explicit dependencies on the dimension, sparsity, and the variances; the problem of detecting separation in some other norms (namely \(||\theta||_t \geq \varepsilon\) for \(t \in [1, \infty]\)) was also settled in \cite{chhor_sparse_2024}. A common theme is that the minimax rates exhibit intricate dependencies on the variances. 

In all of these works, the variances are known to the statistician and so the null hypotheses studied are all simple nulls; there are no nuisance parameters. In contrast, the unknown mean function \(f\) is a nuisance parameter in the heteroskedasticity testing problem we consider, and so the null hypothesis that \(V\) is a constant function is a truly composite null. The minimax testing literature has noted that theory for composite nulls can be much more subtle than that for simple nulls (e.g. see Section 6.2.4 in \cite{gine_mathematical_2016} and references therein). Indeed, the impossibility of adaptation discussed in Section \ref{section:adaptation} can be viewed as a consequence of a composite null hypothesis.

Comminges and Dalalyan \cite{comminges_minimax_2013} investigate a testing problem involving a composite null defined via a quadratic functional. Specializing to one dimension here for discussion, Comminges and Dalalyan consider the homoskedastic regression model \(Y_i = f(x_i) + \xi_i\) where \(x_i\) are design points in \([0, 1]\), \(\xi_i\) are i.i.d. mean zero noise variables with known variance \(\tau^2\), and the unknown mean function lives in a subset of an ellipsoid, that is \(f \in \Sigma \subset \left\{\sum_{j=1}^{\infty} \theta_j \varphi_j : \sum_{j=1}^{\infty} c_j\theta_j^2 \leq 1 \right\}\). Here, \(\{\varphi_j\}_{j = 1}^{\infty}\) is an orthonormal basis of \(L^2([0, 1])\) and \(c_j > 0\) are known coefficients. For a quadratic functional of the form \(Q(f) = \sum_{j=1}^{\infty} q_j \theta_j^2\) where \(\{q_j\}_{j=1}^{\infty}\) are known, Comminges and Dalalyan study the problem of testing \(H_0 : Q(f) = 0 \text{ and } f \in \Sigma\) versus \(H_1 : |Q(f)| \geq \varepsilon^2 \text{ and } f \in \Sigma\). Notably, they allow \(q_j \leq 0\), in which case there may be some nonzero \(f\) which yield \(Q(f) = 0\); the null hypothesis can be truly composite. Comminges and Dalalyan \cite{comminges_minimax_2013} derive minimax separation rates and show they can depend delicately on both \(\{q_j\}_{j=1}^{\infty}\) and \(\{c_j\}_{j=1}^{\infty}\). Their test statistics are developed from considering properly normalized estimators of \(Q(f)\). Their results and methodology are closely related to quadratic functional estimation. 

The heteroskedasticity testing problem we consider can also be seen to be formulated in terms of a quadratic functional. However, it involves a quadratic functional of the variance function rather than the mean. This appears to be a fundamental difference, and it is not clear to us whether the problem can be easily transformed in an information-theoretically equivalent manner into a problem that falls within the framework considered by Comminges and Dalalyan \cite{comminges_minimax_2013}. So it is unclear that a minimax lower bound argument from \cite{comminges_minimax_2013} could be made to go through to establish a lower bound for heteroskedasticity testing. On the methodological side, the test statistics we propose are not developed from consideration of a natural estimand as in \cite{comminges_minimax_2013}, but rather from consideration of a proxy. It is not so clear to us how (if possible) this proxy can be cast into a target involving some basis coefficients to which the methodology of \cite{comminges_minimax_2013} can be applied. It would be an interesting challenge to work out whether their methodology (developed in a homoskedastic regression model with known noise variance) could be suitably modified to optimally test for heteroskedasticity. 

\bibliographystyle{skotekal.bst}
\bibliography{heteroskedastic.bib}

\newpage 

\begin{center}
    {\LARGE Supplement to ``Optimal heteroskedasticity testing in nonparametric regression''	
        
    }		
    \medskip
    
    {\large Subhodh Kotekal \quad and\quad Soumyabrata Kundu}
    \medskip
\end{center}
In this supplement, we provide a table of contents and the deferred proofs for results stated in the main text. Additionally, Appendix \ref{appendix:adaptation_simulation} presents a numerical simulation to illustrate the impossibility of adaptation discussed in Section \ref{section:adaptation}.

\tableofcontents

\begin{appendix}

\section{Upper bound}\label{appendix:upper_bound}
This section contains proofs of the upper bounds presented in Section \ref{section: upper bound}.

\subsection{\texorpdfstring{Mean squared error of $\hat{T}$}{Mean squared error}}

The proof of Proposition \ref{prop: MSE of T_hat beta>1/4} is presented here. For ease of notation, we will simply write \(P\) and \(E\) instead of \(P_{f,V}\) and \(E_{f,V}\). 

\begin{lemma}\label{lemma: kernel sums to 1}
For any fixed integer $i$ such that $\lceil nh\rceil \leq i\leq  n- \lceil nh\rceil$, we have $\sum_{j=0}^{n-1} K_n^h(i-j) = 1$.
\end{lemma}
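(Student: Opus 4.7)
The plan is a direct computation: pull the constant denominator in \eqref{def: kernel} out of the sum, evaluate the numerator by a telescoping argument combined with the support of $K$, and recognize that the result exactly cancels the denominator.

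First, observe that the denominator of $K_n^h(i-j)$ does not depend on $j$, so it suffices to show that
\[
\sum_{j=0}^{n-1} \int_{|i-j|/n}^{(|i-j|+1)/n} \frac{1}{h}K\!\left(\frac{u}{h}\right) du \cdot \mathbbm{1}_{\{|i-j|\geq 2\}} \;=\; 1 - \int_{-2/n}^{2/n} \frac{1}{h}K\!\left(\frac{u}{h}\right) du.
\]
The indicator annihilates the three terms $j\in\{i-1,i,i+1\}$, so the sum splits into the contributions from $j\leq i-2$ and $j\geq i+2$. In the first sum I would substitute $k=i-j$ (ranging from $2$ to $i$) and observe that the integrals $\int_{k/n}^{(k+1)/n}\tfrac{1}{h}K(u/h)\,du$ telescope into $\int_{2/n}^{(i+1)/n}\tfrac{1}{h}K(u/h)\,du$. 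An analogous substitution $k=j-i$ in the second sum yields $\int_{2/n}^{(n-i)/n}\tfrac{1}{h}K(u/h)\,du$.

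Next I would invoke the support condition $\operatorname{supp}(K)\subseteq[-1,1]$, so that $K(u/h)$ is supported on $[-h,h]$. Here is where the hypothesis $\lceil nh\rceil \leq i\leq n-\lceil nh\rceil$ enters: it guarantees $(i+1)/n \geq h$ and $(n-i)/n \geq h$, so both upper limits lie at or beyond the right edge of the support, and the two telescoped integrals both equal $\int_{2/n}^{h}\tfrac{1}{h}K(u/h)\,du$. Using the symmetry of $K$ about $0$, the sum of the two becomes
\[
\int_{-h}^{-2/n}\frac{1}{h}K\!\left(\frac{u}{h}\right)du + \int_{2/n}^{h}\frac{1}{h}K\!\left(\frac{u}{h}\right)du \;=\; \int_{-h}^{h}\frac{1}{h}K\!\left(\frac{u}{h}\right)du - \int_{-2/n}^{2/n}\frac{1}{h}K\!\left(\frac{u}{h}\right)du.
\]
Finally, the change of variables $x=u/h$ together with $\int_{-1}^{1}K(x)\,dx = 1$ gives $\int_{-h}^{h}\tfrac{1}{h}K(u/h)\,du = 1$, so the numerator equals $1 - \int_{-2/n}^{2/n}\tfrac{1}{h}K(u/h)\,du$, which cancels the denominator and completes the proof.

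There is no real obstacle here; the only subtlety is the careful bookkeeping to confirm that the support-of-$K$ truncation kicks in precisely because of the hypothesis on $i$. The role of $\lceil nh\rceil$ (rather than $nh$) is just to guarantee the integer index $i$ actually exceeds $nh$, since otherwise the telescoped upper limit might fall short of $h$ and leave a stray boundary term.
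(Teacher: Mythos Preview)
Your proposal is correct and takes essentially the same approach as the paper: a direct computation that telescopes the numerator integrals, invokes the support of $K$ together with the hypothesis on $i$, and uses symmetry and $\int_{-1}^{1}K=1$ to cancel the denominator. The only cosmetic difference is that the paper applies the symmetry of $K$ before telescoping (flipping one of the two sums to the negative axis) and then truncates via the support after a shift, whereas you telescope both sums on the positive axis, truncate first, and apply symmetry afterward; your ordering is arguably slightly cleaner.
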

\begin{proof}
Since $K$ is symmetric about $0$, by direct calculation we have,
\begin{align*}
\sum_{j=0}^{n-1} \mathbbm{1}_{\{|i-j|\geq 2\}}\int_{|i-j|/n}^{(|i-j|+1)/n} \frac{1}{h}K\left( \frac{u}{h} \right)du 
&= \sum_{j=0}^{i-2} \int_{(i-j)/n}^{(i-j+1)/n} \frac{1}{h}K\left( \frac{u}{h} \right)du  +  \sum_{j=i+2}^{n-1} \int_{(j-i)/n}^{(j-i+1)/n} \frac{1}{h}K\left( \frac{u}{h} \right)du \\
&= \sum_{j=0}^{i-2} \int_{(j-i-1)/n}^{(j-i)/n} \frac{1}{h}K\left( \frac{u}{h} \right)du  +   \sum_{j=i+2}^{n-1} \int_{(j-i)/n}^{(j-i+1)/n} \frac{1}{h}K\left( \frac{u}{h} \right)du \\
&=  \int_{-(i+1)/n}^{-2/n} \frac{1}{h}K\left( \frac{u}{h} \right)du  +  \int_{2/n}^{1-i/n} \frac{1}{h}K\left( \frac{u}{h} \right)du \\
&=  \int_{-(i+1)/n}^{1-i/n} \frac{1}{h}K\left( \frac{u}{h} \right)du  -  \int_{-2/n}^{2/n} \frac{1}{h}K\left( \frac{u}{h} \right)du \\
&= {\int_{-1/n}^{1} \frac{1}{h} K\left(\frac{u - i/n}{h}\right) \, du - \int_{-2/n}^{2/n} \frac{1}{h}K\left( \frac{u}{h} \right)du }\\
&=  \int_{-1/n}^{1} \frac{1}{h}K\left( \frac{i/n - u}{h} \right)du  -  \int_{-2/n}^{2/n} \frac{1}{h}K\left( \frac{u}{h} \right)du. 
\end{align*}
The final line follows from the symmetry of \(K\) about \(0\). Keep in mind that when $\lceil nh\rceil \leq i\leq n- \lceil nh\rceil$, we have $h< i/n\leq 1-h$. Considering that $K\left( \frac{i/n - u}{h} \right) = 0$ for $u$ outside the interval $[\frac{i}{n}-h, \frac{i}{n}+h]$, which is a subset of $[0,1]$, we can deduce the following,
\begin{align*}
\sum_{j=0}^{n-1} \mathbbm{1}_{|i-j|\geq 2} \int_{|i-j|/n}^{(|i-j|+1)/n} \frac{1}{h}&K\left( \frac{u}{h} \right)du 
=  \int_{i/n-h}^{i/n+h} \frac{1}{h}K\left( \frac{i/n - u}{h} \right)du  -  \int_{-2/n}^{2/n} \frac{1}{h}K\left( \frac{u}{h} \right)du \\
&=  \int_{-1}^1 K(u)du -  \int_{-2/n}^{2/n} \frac{1}{h}K\left( \frac{u}{h} \right)du  = 1 -  \int_{-2/n}^{2/n} \frac{1}{h}K\left( \frac{u}{h} \right)du. 
\end{align*}
Using the definition of $K_n^h$ in (\ref{def: kernel}), the proof is complete.
\end{proof}

\begin{proof}[Proof of Proposition \ref{prop: MSE of T_hat beta>1/4}]
To reduce the notational clutter, the sum ranges from $0$ to $n-1$ whenever the limits are not specified. \newline

\noindent \textbf{Bias Calculation:}
In bounding the mean squared error, we first examine the bias. Note
\begin{align*}
E_{f, V} (\widehat{T})
&= \frac{1}{n}\sum_{|i-j|\geq 2} K_n^h(i-j) (\delta_i^2 + W_i) ( \delta_j^2 + W_j) -  \frac{1}{n^2}\sum_{|i-j|\geq 2} (\delta_i^2 + W_i) ( \delta_j^2 + W_j)  \left(= {T}_1 - {T}_2\right).
\end{align*}
Consider that, from (\ref{def: kernel}), for fixed integer $i$ we get
\begin{align}\label{eqn: mod kernel sum is bounded}
\left| \sum_j K_n^h(i-j) \right| \leq \frac{1}{c} \sum_j \int_{|i-j|/n}^{(|i-j|+1)/n} \left| \frac{1}{h} K\left(\frac{u}{h}\right)\right|du \leq \frac{||K||_1}{c}.
\end{align}
Let us look at $T_1$ first:
\begin{align}
 &\left|T_1 -\frac{1}{n}\sum_i (\delta_i^2+W_i)^2 \right| \nonumber\\
=&  \frac{1}{n}\left|\sum_{|i-j|\geq 2} K_n^h(i-j) (\delta_i^2 + W_i) ( \delta_j^2 + W_j) -  \sum_{i}(\delta_i^2+W_i)^2\right| \nonumber\\
\leq&  \frac{1}{n}\left|\sum_{i,j} K_n^h(i-j) \delta_i^2 \delta_j^2 -\sum_i\delta_i^4\right|  + \frac{2}{n}\left|\sum_{i,j}K_n^h(i-j)\delta_i^2W_j - \sum_i\delta_i^2W_i\right|  + \frac{1}{n}\left| \sum_{i,j}K_n^h(i-j)W_iW_j - \sum_iW_i^2 \right|\nonumber\\
\lesssim& n^{-4(\alpha\wedge 1)} + \frac{2n^{-2(\alpha\wedge 1)}}{n}\sum_i\left| \sum_jK_n^h(i-j)W_j-W_i \right| + \frac{1}{n}\left| \sum_{i,j}K_n^h(i-j)W_iW_j - \sum_iW_i^2 \right|\label{eqn: bias split}
\end{align}
where the second equality holds because $K_n^h(i-j)$ equals $0$ when $|i-j|<2$. Moreover, $K_n^h(\cdot)$ is symmetric, meaning $K_n^h(i-j) = K_n^h(j-i)$ for all $i$ and $j$. The final inequality holds due to (\ref{eqn: mod kernel sum is bounded}) and the fact that $|\delta_i|\lesssim n^{-\alpha}$ when $\alpha<1$ and $|\delta_i|\lesssim n^{-1}$ when $\alpha>1$, since $f\in \mathcal{H}_\alpha \subset \mathcal{H}_1$. To bound the second term in (\ref{eqn: bias split}) consider that
\begin{align}
\sum_{i}\left| \sum_jK_n^h(i-j)W_j-W_i \right| \nonumber
&= \sum_{i}\left| \sum_jK_n^h(i-j)(W_j-W_i) - W_i\left(1 - \sum_iK_n^h(i-j)\right) \right|\nonumber\\
&\lesssim  \sum_{i,j}|K_n^h(i-j)||W_j-W_i| + \sum_i\left|1 - \sum_jK_n^h(i-j)\right|\nonumber\\
&=\sum_{i,j}|K_n^h(i-j)||W_j-W_i| + \sum_{\substack{i\in [0, \lceil nh\rceil] \\ \cup [n-\lceil nh\rceil, n]}}\left|1 - \sum_jK_n^h(i-j)\right|\nonumber\\
&\lesssim  \sum_{i,j} |K_n^h(i-j)|\left|\frac{i-j}{n}\right|^\beta + nh\label{eqn: bias split second term}
\end{align}
where the second last equality follows from Lemma \ref{lemma: kernel sums to 1} and the last inequality holds due to (\ref{eqn: mod kernel sum is bounded}). Finally, from the fact that $V\in \mathcal{H}_\beta$ and using the substitution $i-j=t$, we get
\begin{align}
\sum_{i,j} |K_n^h(i-j)|\left|\frac{i-j}{n}\right|^\beta
& \leq \sum_i\sum_{t=i-(n-1)}^{i} |K_n^h(t)|\left|\frac{t}{n}\right|^\beta\nonumber\\ 
& \leq n\sum_{t=-(n-1)}^{n-1} |K_n^h(t)||t/n|^\beta \nonumber\\ 
&=  n\sum_{t=-(n-1)}^{n-1} \left(\int_{|t|/n}^{(|t|+1)/n} \left| \frac{1}{h}K\left(\frac{u}{h}\right) \right| du\right) \left|\frac{t}{n}\right|^\beta \nonumber\\
&\leq n|h|^\beta \sum_{t=-(n-1)}^{n-1} \int_{|t|/n}^{(|t|+1)/n} \left| \frac{1}{h}K\left(\frac{u}{h}\right) \right| \left|\frac{u}{h}\right|^\beta du\nonumber \\
&= n|h|^\beta \sum_{t=-(n-1)}^{n-1} \int_{|t|/nh}^{(|t|+1)/nh}|u|^\beta |K(u)|du\nonumber \\
&\leq {n|h|^\beta \int_{-1}^{1} |u|^\beta |K(u)| \, du} \nonumber \\
&\leq { n|h|^\beta \int_{-1}^{1} |K(u)| \, du} \nonumber \\
&\leq  n|h|^\beta ||K||_1. \label{eqn: bias split second term kernel}
\end{align}
Here, we have used that \(K\) has support on \([-1, 1]\) and is bounded. Plugging (\ref{eqn: bias split second term kernel}) into (\ref{eqn: bias split second term}) we get the following bound,
\begin{equation}
     \sum_{i}\left| \sum_jK_n^h(i-j)W_j-W_i \right| \lesssim  nh^\beta + nh. \label{eqn: bias split second term bound}
\end{equation}
Let us move on to the remaining term in (\ref{eqn: bias split}). First, define the function $\tilde{W}_{n-1}:\Z\to \R$, with $\tilde{W}_{n-1}(i) = W_i$ for $0\leq i\leq n-1$ and $\tilde{W}_{n-1}(i) = 0$ otherwise. Note that, $\sum_{j\in \Z} K_h^n(i-j) = 1$ for any $i$. With this understanding, taking the substitution $i-j=t$ gives us
\begin{align}
 \left|\sum_{i,j} K_n^h(i-j)W_iW_j-\sum_{i} W_i^2\right| \nonumber
&= \left|\sum_{i,j\in \Z} K_n^h(i-j)\tilde{W}_{n-1}(i)\tilde{W}_{n-1}(j)-\sum_{i\in \Z}\tilde{W}_{n-1}(i)^2\right|\nonumber\\
&= \left|\sum_{i,t\in \Z} K_n^h(t)\tilde{W}_{n-1}(i)\tilde{W}_{n-1}(i-t)-\sum_{i\in \Z}\tilde{W}_{n-1}(i)^2\right|\nonumber\\
&= \left|\sum_{i,t\in \Z} K_n^h(t)\left(\tilde{W}_{n-1}(i)\tilde{W}_{n-1}(i-t)-\tilde{W}_{n-1}(i)^2\right)\right|\nonumber\\
&= \left|\sum_{t\in \Z} K_n^h(t)\left(\sum_{i\in\Z}\tilde{W}_{n-1}(i)\tilde{W}^-_{n-1}(t-i)-\tilde{W}_{n-1}(i)\tilde{W}_{n-1}^-(0-i)\right)\right|\nonumber\\
&= \left|\sum_{t\in \Z} K_n^h(t)\left(\tilde{W}_{n-1}\ast_D \tilde{W}^-_{n-1}(t)-\tilde{W}_{n-1}\ast_D\tilde{W}_{n-1}^-(0)\right)\right|\label{eqn: bias split third term}
\end{align}
where \(\tilde{W}_{n-1}^{-}(i) = \tilde{W}_{n-1}(-i)\) and $\ast_D$ is the discrete convolution. Note that, for any fixed $k\in \Z$ and $i\in [\max\{-k,0\}, n-1-\max\{k,0\}]$, we have $|\tilde{W}_{n-1}(i+k) - \tilde{W}_{n-1}(i)| \leq 2M|k/n|^{\beta}$ since $V\in \mathcal{H}_\beta(M)$. Then, using Proposition \ref{prop: convolution}, we have 
\begin{equation}
    \left|\tilde{W}_{n-1}\ast_D \tilde{W}_{n-1}^-(t) - \tilde{W}_{n-1}\ast_D \tilde{W}_{n-1}^-(0)\right|\leq nC(\beta, M) \left|\frac{t}{n-1}\right|^{2\beta} \label{eqn: convolution}
\end{equation}
for any \(t \in [-(n-1), n-1]\cap \Z\). Consider that \(K_n^h(t) = 0\) for \(|t| > nh\) since \(K\) is supported on \([-1, 1]\). Then, from (\ref{eqn: bias split third term}) and (\ref{eqn: convolution}) it follows that 
\begin{align}
\frac{1}{n} \left|\sum_{i,j} K_n^h(i-j)W_iW_j-\sum_{i} W_i^2\right| &= \frac{1}{n} \left|\sum_{t\in \Z} K_n^h(t)\left(\tilde{W}_{n-1}\ast_D \tilde{W}^-_{n-1}(t)-\tilde{W}_{n-1}\ast_D\tilde{W}_{n-1}^-(0)\right)\right|\nonumber \\
&\leq \frac{1}{n } \sum_{t=-\lceil nh \rceil}^{\lceil nh \rceil} |K_n^h(t)| \cdot \left|\tilde{W}_{n-1}\ast_D \tilde{W}^-_{n-1}(t)-\tilde{W}_{n-1}\ast_D\tilde{W}_{n-1}^-(0)\right|\nonumber \\
&\lesssim \frac{1}{n} \sum_{t = -\lceil nh\rceil}^{\lceil nh\rceil} |K_n^h(t)||t/n|^{2\beta} \nonumber\\
&\leq \frac{1}{n} \sum_{t \in \Z} |K_n^h(t)||t/n|^{2\beta} \nonumber\\
&\lesssim \frac{|h|^{2\beta}}{c} \sum_{t \in \Z} \int_{|t|/n}^{(|t|+1)/n}|u/h|^{2\beta} |h^{-1}K(u/h)|du  \nonumber\\
&\lesssim |h|^{2\beta} \int_{-\infty}^{\infty} |u|^{2\beta} |K(u)|du  \nonumber\\
&= |h|^{2\beta} \int_{-1}^{1} |u|^{2\beta} |K(u)|du \nonumber\\
&\leq |h|^{2\beta} ||K||_1.\label{eqn: bias split third term bound}
\end{align}
Here, we have used \(K\) is supported on \([-1, 1]\). Plugging (\ref{eqn: bias split second term bound}) and (\ref{eqn: bias split third term bound}) into (\ref{eqn: bias split}) and using the expression $ab \lesssim a^2 + b^2 $ for reals $a$ and $b$ gives us,
\begin{equation}
    \left|T_1 -\frac{1}{n}\sum_i (\delta_i^2+W_i)^2 \right| \lesssim n^{-4(\alpha\wedge 1)} + n^{-2(\alpha \wedge 1)}(h^{\beta} + h) + h^{2\beta} \lesssim n^{-4(\alpha\wedge 1)} + h^{2\beta} + h^2. \label{eqn: bias T_1 bound}
\end{equation}
On the other hand, from the boundedness of $f$ and $V$, we have,
\begin{align}
\left|T_2 - \left(\frac{1}{n}\sum_{i} \delta_i^2 + W_i \right)^2 \right|  
&=  \left|\frac{1}{n^2}\sum_{|i-j|\geq 2} (\delta_i^2 + W_i ) (\delta_j^2 + W_j ) - \left(\frac{1}{n}\sum_{i} \delta_i^2 + W_i \right)^2 \right| \nonumber\\
&= \frac{1}{n^2}\sum_{i} 2(\delta_i^2 + W_i) (\delta_{i+1}^2 + W_{i+1}) +  (\delta_i^2 + W_i)^2 \nonumber\\
&\asymp n^{-1}.\label{eqn: bias T_2 bound}
\end{align}
In summary, using (\ref{eqn: bias T_1 bound}) and (\ref{eqn: bias T_2 bound}), we can derive the following bound on the bias:
\begin{align}
\left| E(\widehat{T}) - T  \right| \nonumber
&= \left| T_1-T_2 -   \frac{1}{n}\sum_i \left(\delta_i^2+W_i - \bar{W}_n - \overline{\delta^2_n}\right)\right|\nonumber\\
&= \left| T_1-T_2 -   \frac{1}{n}\sum_i  \delta_i^2 + W_i ^2 +  \left(\frac{1}{n}\sum_{i} \delta_i^2 + W_i \right)^2\right|\nonumber\\
&\leq \left\vert T_1 - \frac{1}{n}\sum_i  (\delta_i^2 + W_i) ^2\right\vert +  \left|T_2 - \left(\frac{1}{n}\sum_{i} \delta_i^2 + W_i \right)^2 \right|\nonumber  \\
&\lesssim n^{-4(\alpha\wedge 1)} + h^{2\beta} + h^2 + n^{-1}. \label{eqn: bias bound}
\end{align}
With the bias handled, let us now investigate the variance. \newline 

\noindent \textbf{Variance Calculation: }

\noindent First, consider that we can write 
\begin{align*}
\hat{T} = \frac{1}{n^2} \sum_{|i - j| \geq 2} R_i^2R_j^2 \left(nK_n^{h}(i-j) - 1\right)
\end{align*}
and so 
\begin{align*}
\Var\left(\hat{T}\right) &= \frac{1}{n^4} \sum_{|i-j| \geq 2} \sum_{|k - l| \geq 2} \Cov\left(R_i^2R_j^2, R_k^2R_l^2\right) \left(nK_n^{h}(i-j) - 1\right)\left(n K_n^{h}(k-l) - 1\right). 
\end{align*}
For \(|k-l| \geq 2\) such that \(\{k, l\} \cap \{i-1, i, i+1, j-1, j, j+1\} = \emptyset\), the random variables \(R_i^2, R_j^2,R_k^2,\) and \(R_l^2\) are all independent, resulting in the covariance to be zero. We will now address the remaining terms. To simplify notation, we will define a set for each pair \(i,j\):
\begin{equation*}
G_{ij} = \left\{i-1,i,i+1,j-1,j,j+1\right\} \cap \left\{0,...,n-1\right\}. 
\end{equation*}
Then we have 
\begin{align}
\Var\left(\hat{T}\right) &= \frac{1}{n^4} \sum_{|i-j| \geq 2} \sum_{k, l \in G_{ij}} \Cov\left(R_i^2R_j^2, R_k^2R_l^2\right) \left(nK_n^{h}(i-j) - 1\right)\left(n K_n^{h}(k-l) - 1\right) \label{eqn:G_var}\\
&+ \frac{2}{n^4} \sum_{|i-j| \geq 2} \sum_{\substack{|k-l| \geq 2 \\ k \in G_{ij}\\ l \not \in G_{ij}}} \Cov\left(R_i^2R_j^2, R_k^2R_l^2\right) \left(nK_n^{h}(i-j) - 1\right)\left(n K_n^{h}(k-l) - 1\right). \label{eqn:B_var}
\end{align}

\noindent Before we delve into the calculations let us prove the following two identities about the kernel,
\begin{align}
& |nK_n^h(i-j)-1| \lesssim 1+h^{-1}, \label{eqn: bound on kernel}\\
& \sum_{j} |nK_n^h(i-j)-1| \lesssim n. \label{eqn: bound on sum of kernel}
\end{align}
The bound (\ref{eqn: bound on sum of kernel}) is a consequence of (\ref{eqn: mod kernel sum is bounded}), and (\ref{eqn: bound on kernel}) follows from \(||K_n^h||_\infty \lesssim \frac{1}{nh}\) since 
\begin{align*}
|K_n^h(t)| \leq \frac{\left|\int_{|t|/n}^{(|t|+1)/n} \frac{1}{h}K\left(\frac{u}{h}\right) \, du \right|}{\left| 1 - \int_{-2/nh}^{2/nh} K(u) du \right|} \leq \frac{||K||_\infty}{cnh} \lesssim \frac{1}{nh}.  
\end{align*}

\noindent \textbf{Bounding (\ref{eqn:G_var}):} Let us examine (\ref{eqn:G_var}) first. Since \(V\) and \(f\) are bounded, we have \(\Cov(R_i^2R_j^2, R_k^2R_l^2) \lesssim 1\) and so 
\begin{align}
&\frac{1}{n^4} \sum_{|i-j| \geq 2} \sum_{k, l \in G_{ij}} \Cov\left(R_i^2R_j^2, R_k^2R_l^2\right) \left(nK_n^{h}(i-j) - 1\right)\left(n K_n^{h}(k-l) - 1\right) \nonumber\\
&\lesssim \frac{1}{n^4} \sum_{|i-j| \geq 2} |nK_n^h(i-j) - 1| \sum_{k, l \in G_{ij}} |nK_n^h(k-l)-1| \nonumber\\
&{\lesssim} \frac{1}{n^4} \sum_{|i-j| \geq 2} |nK_n^h(i-j) - 1| \left( |G_{ij}|^2 (1+h^{-1})\right) \nonumber\\
&\lesssim \frac{1}{n^4} n^2 (1+h^{-1}) \\
&\lesssim \frac{1}{n^2} + \frac{1}{n^2h}.\label{eqn:G_var bound}
\end{align}

\noindent \textbf{Bounding (\ref{eqn:B_var}):} It is convenient to split (\ref{eqn:B_var}) further into,
\begin{align}
&\frac{2}{n^4} \sum_{|i-j| \geq 2} \sum_{\substack{|k-l| \geq 2 \\ k \in G_{ij}\\ l \not \in G_{ij}}} \Cov\left(R_i^2R_j^2, R_k^2R_l^2\right) \left(nK_n^{h}(i-j) - 1\right)\left(n K_n^{h}(k-l) - 1\right) \nonumber \\
&= \frac{2}{n^4} \sum_{|i-j| \geq 3} \sum_{\substack{|k-l| \geq 2 \\ k \in G_{ij}\\ l \not \in G_{ij}}} \Cov\left(R_i^2R_j^2, R_k^2R_l^2\right) \left(nK_n^{h}(i-j) - 1\right)\left(n K_n^{h}(k-l) - 1\right)\label{eqn:B_var_sep} \\
&+ \frac{2}{n^4} \sum_{|i-j| = 2} \sum_{\substack{|k-l| \geq 2 \\ k \in G_{ij}\\ l \not \in G_{ij}}} \Cov\left(R_i^2R_j^2, R_k^2R_l^2\right) \left(nK_n^{h}(i-j) - 1\right)\left(n K_n^{h}(k-l) - 1\right).\label{eqn:B_var_close}
\end{align}
This split is convenient because for \(|i-j| \geq 3\), we have that \(\{i-1,i,i+1\} \cap \{j-1,j,j+1\} = \emptyset\), and so \(k \in G_{ij}\) implies that either \(|k-i| < 2\) or \(|k-j| < 2\) but not both. The term (\ref{eqn:B_var_sep}) has the most involved analysis, so let us first deal with (\ref{eqn:B_var_close}). Using the boundedness of \(f\) and \(V\), as well as the inequalities (\ref{eqn: bound on sum of kernel}) and (\ref{eqn: bound on kernel}), we have 
\begin{align}
&\frac{2}{n^4} \sum_{|i-j| = 2} \sum_{\substack{|k-l| \geq 2 \\ k \in G_{ij}\\ l \not \in G_{ij}}} \Cov\left(R_i^2R_j^2, R_k^2R_l^2\right) \left(nK_n^{h}(i-j) - 1\right)\left(n K_n^{h}(k-l) - 1\right) \nonumber \\
&{\lesssim} \frac{1}{n^4} \sum_{|i-j| = 2} (|nK_n^h(i-j) - 1|) \left( \sum_{\substack{k \in G_{ij}, \\ l \not \in G_{ij}}} |nK_n^h(k-l) -1| \right) \nonumber \\
&\lesssim \frac{1}{n^4} \sum_{|i-j| = 2} (1+h^{{-1}}) |G_{ij}|n \nonumber \\
&\lesssim \frac{1}{n^4}  (n^2+n^2h^{{-1}})  \nonumber\\
&\lesssim \frac{1}{n^2} + \frac{1}{n^2h}.  \label{eqn: B_var sep bound}
\end{align}
We will now bound (\ref{eqn:B_var_sep}). Fix \(|i-j| \geq 3\) and \(|k-l| \geq 2\) with \(k\in G_{ij}, l \not \in G_{ij}\). Note that since \(|i-j| \geq 3\), we have either \(k \in \{i-1,i,i+1\}\) or \(k \in \{j-1, j, j+1\}\) but not both. Without loss of generality, suppose \(k \in \{i-1,i,i+1\}\). Note that the three random objects \((R_i^2, R_k^2), R_j^2, R_l^2\) are mutually independent. Then consider
\begin{align*}
\Cov\left(R_i^2R_j^2,R_k^2R_l^2\right) &= E\left(R_i^2R_j^2R_k^2R_l^2\right) - E(R_i^2R_j^2)E(R_k^2R_l^2) \\
&= E(R_i^2R_k^2)E(R_j^2)E(R_l^2) - E(R_i^2)E(R_j^2)E(R_k^2)E(R_l^2) \\
&=E(R_j^2)E(R_l^2)\Cov(R_i^2,R_k^2) .
\end{align*}
For ease of notation, define \(U_r = \delta_r^2 + W_r\). Note \(E(R_j^2) = U_j\) and \(E(R_l^2) = U_l\). Then 
\begin{align}
&\frac{2}{n^4} \sum_{|i-j| \geq 3} \sum_{\substack{|k-l| \geq 2 \\ k \in G_{ij}\\ l \not \in G_{ij}}} \Cov\left(R_i^2R_j^2, R_k^2R_l^2\right) \left(nK_n^{h}(i-j) - 1\right)\left(n K_n^{h}(k-l) - 1\right) \nonumber\\
&= \frac{4}{n^4} \sum_{|i-j| \geq 3} \sum_{\substack{|k-l| \geq 2 \\ |k-i| \leq 1 \\ l \not \in G_{ij}}} U_jU_l \Cov(R_i^2, R_k^2) \left(nK_n^{h}(i-j) - 1\right)\left(n K_n^{h}(k-l) - 1\right) \nonumber\\
&= \frac{4}{n^4} \sum_{|i-k| \leq 1} \Cov(R_i^2, R_k^2) \sum_{|j-i| \geq 3} (nK_n^h(i-j)-1) U_j \sum_{\substack{|l-k| \geq 2 \\ l \not \in G_{ij}}} U_l(nK_n^h(k-l) - 1) \nonumber\\
&= \frac{4}{n^4} \sum_{|i-k| \leq 1} \Cov(R_i^2, R_k^2) \left( \left(\sum_{|j-i| \geq 3} (nK_n^h(i-j)-1) U_j\right) \left(\sum_{l} U_l(nK_n^h(k-l) - 1)\right) \right. \nonumber\\
&\left. - \sum_{|j-i| \geq 3} (nK_n^h(i-j)-1) U_j \left(\sum_{\substack{|l-k| < 2, \\ or\  l \in G_{ij}}}U_l(nK_n^h(k-l) - 1)\right)\right) \nonumber\\
&= \frac{4}{n^4} \sum_{|i-k| \leq 1} \Cov(R_i^2, R_k^2) \left( \left(\sum_{j} (nK_n^h(i-j)-1) U_j\right) \left(\sum_{l} U_l(nK_n^h(k-l) - 1)\right) \right. \nonumber\\
&\left. - \left(\sum_{|j-i| < 3} (nK_n^h(i-j)-1) U_j\right) \left(\sum_{l} U_l(nK_n^h(k-l) - 1)\right)\right. \nonumber\\
&\left. - \sum_{|j-i| \geq 3} (nK_n^h(i-j)-1) U_j\left( \sum_{\substack{|l-k| < 2, \\ or \ l \in G_{ij}}}U_l(nK_n^h(k-l) - 1)\right)\right) \label{eqn:B_var_sep_split}.
\end{align} 
There are three main terms to handle. The first term is the important one, with the latter two being remainders which we will bound now. Consider that by boundedness of \(f\) and \(V\) we have 
\begin{align}
&\frac{4}{n^4} \sum_{|i-k| \leq 1} \Cov(R_i^2,R_k^2) \left(\sum_{|j-i| < 3} (nK_n^h(i-j)-1) U_j\right) \left(\sum_{l} U_l(nK_n^h(k-l) - 1)\right) \nonumber\\
&\lesssim \frac{4}{n^4} \sum_{|i-k| \leq 1} |\Cov(R_i^2,R_k^2)| \left(\sum_{|j-i| < 3} |nK_n^h(i-j)-1| \right) \left(\sum_{l} |nK_n^h(k-l) - 1|\right) \nonumber\\
&\lesssim  \frac{1}{n^4} \sum_{|i-k| \leq 1} |\Cov(R_i^2,R_k^2)| (1+h^{-1})n \nonumber\\
&\lesssim \frac{1}{n^2} + \frac{1}{n^2h}. \label{eqn: B_var bound 1}
\end{align}
A similar argument shows 
\begin{align}
  \frac{4}{n^4} \sum_{|i-k| \leq 1} \Cov(R_i^2, R_k^2)\left(\sum_{|j-i| \geq 3} (nK_n^h(i-j)-1) U_j\left( \sum_{\substack{|l-k| < 2, \\ or\ l \in G_{ij}}}U_l(nK_n^h(k-l) - 1)\right)\right) \lesssim \frac{1}{n^2 h} + \frac{1}{n^2}.\label{eqn: B_var bound 2}
\end{align}
Therefore, from (\ref{eqn:B_var_sep_split}) we have
\begin{align}
&\frac{2}{n^4} \sum_{|i-j| \geq 3} \sum_{\substack{|k-l| \geq 2 \\ k \in G_{ij}\\ l \not \in G_{ij}}} \Cov\left(R_i^2R_j^2, R_k^2R_l^2\right) \left(nK_n^{h}(i-j) - 1\right)\left(n K_n^{h}(k-l) - 1\right) \nonumber \\
&\leq C\left(\frac{1}{n^2h} + \frac{1}{n^2}\right) + \frac{4}{n^4} \sum_{|i-k| \leq 1} \Cov(R_i^2, R_k^2) \left( \left(\sum_{j} (nK_n^h(i-j)-1) U_j\right) \left(\sum_{l} U_l(nK_n^h(k-l) - 1)\right) \right) \nonumber\\
&= C\left(\frac{1}{n^2h} + \frac{1}{n^2}\right) + S_{-1} + S_0 + S_1
\label{eqn:B_var_sep_setup}
\end{align}
where \(C > 0\) is a universal constant and for \(m \in \{-1, 0, 1\}\) 
\begin{equation*}
S_m = \frac{4}{n^4}\sum_{i} \Cov\left(R_i^2, R_{i+m}^2\right) \left( \left(\sum_{j} (nK_n^h(i-j)-1) U_j\right) \left(\sum_{l} U_l(nK_n^h(i+m-l) - 1)\right) \right)
\end{equation*}
with the convention that a summand is taken to be zero if \(i+m \not\in\{0,...,n-1\}\). By Cauchy-Schwarz and the fact \(f, V\) are bounded, we have 
\begin{align}
S_m &\leq \sqrt{\frac{4}{n^4}\sum_i |\Cov(R_i^2, R_{i+m}^2)| \left(\sum_{j} U_j(nK_n^h(i-j) - 1)\right)^2} \nonumber \\
&\cdot \sqrt{\frac{4}{n^4}\sum_i |\Cov(R_i^2, R_{i+m}^2)| \left(\sum_{l} U_l(nK_n^h(i+m-l) - 1)\right)^2} \nonumber \\
&\lesssim \sqrt{\frac{1}{n^4} \sum_i \left(\sum_j U_j(nK_n^h(i-j) - 1)\right)^2} \sqrt{\frac{1}{n^4}\sum_i \left(\sum_{l} U_l(nK_n^h(i+m-l) - 1)\right)^2} \nonumber\\
&\lesssim \frac{1}{n^4} \sum_i \left(\sum_j U_j(nK_n^h(i-j) - 1)\right)^2 \nonumber \\
&\lesssim \frac{1}{n^4} \sum_i \left(\sum_j \delta_j^2 (nK_n^h(i-j)-1)\right)^2 + \frac{1}{n^4}\sum_i\left(\sum_j W_j (nK_n^h(i-j)-1)\right)^2 .\label{eqn: B_var S_m split}
\end{align}
Since \(f \in \mathcal{H}_\alpha\), we have \(|\delta_j| \leq n^{-(\alpha\wedge 1)}\) for any $\alpha>0$. In addition, using (\ref{eqn: bound on sum of kernel}), we can conclude that the first term in equation (\ref{eqn: B_var S_m split}) is bounded by
\begin{align}
 \frac{1}{n^4} \sum_i \left(\sum_j \delta_j^2 (nK_n^h(i-j)-1)\right)^2  
\lesssim \frac{n^{-4(\alpha \wedge 1)}}{n^4} \sum_i \left(\sum_j |nK_n^h(i-j) - 1|\right)^2  
\lesssim \frac{n^{-4(\alpha \wedge 1)}}{n}.  \label{eqn: B_var bound 3}
\end{align}
The second term in (\ref{eqn: B_var S_m split}) can be bounded with
\begin{align}
\frac{1}{n^4}\sum_i\left(\sum_j W_j (nK_n^h(i-j)-1)\right)^2 &= \frac{1}{n^2} \sum_i \left(\left(\sum_j W_j K_n^h(i-j)\right) - W_i + W_i - \bar{W}_n\right)^2 \nonumber \\
&\lesssim \frac{1}{n^2} \sum_i \left(\sum_{j}K_n^h(i-j)W_j - W_i\right)^2 +  \frac{||W - \bar{W}_n\mathbf{1}_n||^2}{n^2},  \label{eqn: B_var bound 4}
\end{align}
and following the same argument as (\ref{eqn: bias split second term})-(\ref{eqn: bias split second term kernel}) we have,
\begin{align}
 \frac{1}{n^2} \sum_i \left(\sum_{j}K_n^h(i-j)W_j - W_i\right)^2
 &\lesssim \frac{1}{n^2} \sum_i \left(\sum_{j}|K_n^h(i-j)||(i-j)/n|^{\beta})\right)^2 + \frac{h}{n}\nonumber\\
 &\lesssim \frac{h^{2\beta}}{n} + \frac{h}{n} .\label{eqn: B_var bound 5}
\end{align}
Plugging (\ref{eqn: B_var bound 5}) into (\ref{eqn: B_var bound 4}) and then (\ref{eqn: B_var bound 4}) and (\ref{eqn: B_var bound 3}) into (\ref{eqn: B_var S_m split}) gives us that, for $m\in \{-1, 0, 1\}$,
\begin{equation}
    S_m \lesssim \frac{n^{-4(\alpha \wedge 1)} + h^{2\beta} + h}{n} +\frac{||W - \bar{W}_n\mathbf{1}_n||_2^2}{n^2}\lesssim n^{-8(\alpha \wedge 1)} + h^{4\beta} + h^2 + \frac{1}{n^2} + \frac{||W - \bar{W}_n\mathbf{1}_n||_2^2}{n^2},\label{eqn: B_var S_m bound}
\end{equation}
where the last inequality follows from the identity, $ab\lesssim a^2 + b^2$ for reals $a$ and $b$. Plugging these (\ref{eqn: B_var S_m bound}) into (\ref{eqn:B_var_sep_setup}) and then into (\ref{eqn:B_var_close}), and (\ref{eqn: B_var sep bound}) into (\ref{eqn:B_var_sep}) gives us the following bound for (\ref{eqn:B_var}) 
\begin{align}
&\frac{2}{n^4} \sum_{|i-j| \geq 3} \sum_{\substack{|k-l| \geq 2 \\ k \in G_{ij}\\ l \not \in G_{ij}}} \Cov\left(R_i^2R_j^2, R_k^2R_l^2\right) \left(nK_n^{h}(i-j) - 1\right)\left(n K_n^{h}(k-l) - 1\right) \nonumber \\
&\lesssim \frac{1}{n^2h} + \frac{1}{n^2} + n^{-8(\alpha \wedge 1)} + h^{4\beta} +h^2  +\frac{||W - \bar{W}_n\mathbf{1}_n||_2^2}{n^2}.\label{eqn:B_var_sep_bound}
\end{align}
We have the desired bound for this term. Therefore, plugging the bounds for (\ref{eqn:G_var}) and (\ref{eqn:B_var})
yields the following bound for variance
\begin{align}
 \Var\left(\hat{T}\right)
&\lesssim \frac{1}{n^2h} + \frac{1}{n^2} + n^{-8(\alpha \wedge 1)} + h^{4\beta} + h^2  + \frac{||W - \bar{W}_n\mathbf{1}_n||_2^2}{n^2}\label{eqn: variance bound}. 
\end{align}
\newline
\noindent \textbf{Synthesis: }\newline 
Finally, putting together the bias bound from (\ref{eqn: bias bound}) and variance bound from (\ref{eqn: variance bound}) yields the desired bound,
\begin{align*}
 E\left(\left|\hat{T} - T\right|^2\right) 
 &\lesssim  n^{-8(\alpha \wedge 1)} + h^{4\beta} + h^4 + h^2 + \frac{1}{n^2h} + \frac{1}{n^2}   +\frac{||W - \bar{W}_n\mathbf{1}_n||_2^2}{n^2}\\
 & \lesssim   n^{-8(\alpha \wedge 1)} + h^{4\beta} + h^2   +\frac{1}{n^2h} + \frac{||W - \bar{W}_n\mathbf{1}_n||_2^2}{n^2},
\end{align*}
where the last inequality holds because $h<1$.  
\end{proof}

\begin{proof}[Proof of Theorem \ref{theorem: MSE of T_hat}]
We will utilize Propositions \ref{prop: MSE of T_hat beta>1/4} here. To apply Proposition \ref{prop: MSE of T_hat beta>1/4}, we need to satisfy condition (\ref{eqn: kernel integral condition}). Given that \(K\) is bounded, if we select \(h = C_hn^{-\left(\frac{2}{4\beta+1} \wedge 1\right)}\), then
\begin{equation*}
\left|\int_{-2/n}^{2/n} \frac{1}{h}K\left(\frac{u}{h}\right) du\right|  \leq \frac{4||K||_\infty}{nh} = \frac{4||K||_\infty}{C_h}n^{-\left(\frac{4\beta - 1}{4\beta+1} \vee 0\right)}
\end{equation*}
which can be made small for sufficiently large \(C_h\) when \(\beta \leq \frac{1}{4}\) and converges to \(0\) when \(\beta >\frac{1}{4}\). Hence, we can find a universal positive constant \(c\) to satisfy the condition (\ref{eqn: kernel integral condition}) of Proposition \ref{prop: MSE of T_hat beta>1/4}. Consequently, by using Propositions \ref{prop: MSE of T_hat beta>1/4}, we can derive for any \(0 < \beta < \frac{1}{2}\),
\begin{equation}
 E_{f,V}\left(\left|\hat{T} - T\right|^2\right) \lesssim n^{-8(\alpha \wedge 1)} + n^{-\frac{8\beta}{4\beta+1}} + n^{-4\beta} + \frac{||W - \bar{W}_n\mathbf{1}_n||_2^2}{n^2}. \label{MSE: expression in proof}
\end{equation}
Using that $V\in \mathcal{H}_\beta$, we have
\begin{align}
\frac{||W - \bar{W}_n\mathbf{1}_n||_2^2}{n^2}
&= \frac{\sum_i  \left(W_i -  \bar{W}_n\right)^2}{n^2}\nonumber\\
&= \frac{\sum_i \int_{i/n}^{(i+1)/n}\left(W_i - 2V(x)+ 2V(x) - 2\bar{V} +2\bar{V}- \bar{W}_n\right)^2dx}{n}\nonumber\\
&\lesssim \frac{\sum_i \int_{i/n}^{(i+1)/n}\left(W_i - 2V(x)\right)^2dx}{n} + \frac{||V-\bar{V}||_2^2}{n} + \frac{\sum_i \int_{i/n}^{(i+1)/n}\left(2\bar{V}- \bar{W}_n\right)^2dx}{n}\nonumber\\
&\lesssim \frac{n^{-2(\beta \wedge 1)} }{n} +  \frac{||V-\bar{V}\mathbf{1}||_2^2}{n}\nonumber\\
&\lesssim n^{-4(\beta\wedge 1)} +\frac{1}{n^2} +  \frac{||V-\bar{V}\mathbf{1}||_2^2}{n}.\label{eqn: W-W_bar less than V-V_bar}
\end{align}
Plugging (\ref{eqn: W-W_bar less than V-V_bar}) into (\ref{MSE: expression in proof}) gives us the desired bound
\begin{equation*}
E_{f,V}\left(\left|\hat{T} - T\right|^2\right) \lesssim n^{-8\alpha} + n^{-\frac{8\beta}{4\beta+1}} + n^{-4\beta} + \frac{||V-\bar{V}\mathbf{1}||_2^2}{n}
\end{equation*}
where $\alpha\wedge1$ and $\beta\wedge1$ are replaced by $\alpha$ and $\beta$ respectively and $n^{-2}$ is disappears as it is dominated $n^{-2}\lesssim n^{-\frac{8\beta}{4\beta+1}}$ for all $\beta>0$.
\end{proof}

\subsection{Optimal Test for Heteroskedasticity}

Here, we present the proof of Theorem \ref{theorem: optimal test beta>1/4}.

\begin{proof}[Proof of Theorem \ref{theorem: optimal test beta>1/4} ]
Fix \(\eta \in (0, 1)\) and let \(C > C_\eta\) where \(C_\eta\) is to be set. In the course of the proof, we will note where \(C_\eta, C_\eta'\) needs to be taken sufficiently large depending on \(\eta\). Let us first examine the Type I error. Fix \(f \in \mathcal{H}_\alpha\) and \(V \in \mathcal{V}_0\). For ease of notation, we will simply write \(P\) instead of \(P_{f,V}\) as the context is clear. By Theorem \ref{theorem: MSE of T_hat}, when $h\asymp n^{-\left(\frac{2}{4\beta+1} \wedge 1\right)}$, for some universal constant $L>0$, we have
\begin{equation*}
E(\hat{T}^2) \leq  L\left(n^{-8\alpha} + n^{-4\beta} + n^{-\frac{8\beta}{4\beta+1}}\right)\leq L\zeta^4
\end{equation*}
where we have used \(V \in \mathcal{V}_0\) implies \(V=\bar{V}\) and  \(W_i = \bar{W}_n\) for all \(0 \leq i \leq n-1\) and so \(T = \frac{1}{n} \sum_{i=0}^{n-1} \left(\delta_i^2 - \overline{\delta_{n}^2}\right)^2 \lesssim n^{-4(\alpha \wedge 1)}\). Therefore, by Chebyshev's inequality, we have 
\begin{equation*}
P\left\{ \hat{T} > C_\eta' \zeta^2\right\} \leq \frac{L\zeta^4}{C_\eta'^2\zeta^4}  \leq \frac{\eta}{2}
\end{equation*}
by taking \(C_\eta'\) sufficiently large. Taking supremum over \(f \in \mathcal{H}_\alpha\) and \(V \in \mathcal{V}_0\) shows that the Type I error is suitably bounded. Let us now examine the Type II error. Fix \(f \in \mathcal{H}_\alpha\) and \(V \in \mathcal{V}_{1, \beta}(C\zeta)\). We have by Theorem \ref{theorem: MSE of T_hat}, for some universal constant $L'>0$,
\begin{align*}
E\left(\left|\hat{T} - T\right|^2\right) \leq  L'\left(\zeta^4 + \frac{||V-\bar{V}\mathbf{1}||_2^2}{n}\right).
\end{align*}
Now, since $V \in \mathcal{V}_{1, \beta}(C\zeta)$, Proposition \ref{prop: V-V bar less than T} tells us that, for some universal constant $L''>0$, we have $C^2\zeta^2\leq ||V - \bar{V}\mathbf{1}||_2^2 \leq L''(T + \zeta^2)$. Choosing \(C_\eta\) sufficiently large will ensure that \( 2C_\eta'\zeta^2 \leq \frac{C^2}{2L''}\zeta^2 \leq T \). Using this inequality and Chebyshev's inequality gives us,
\begin{align*}
P\left\{\hat{T} \leq C_\eta' \zeta^2\right\} 
&= P\left\{T - C_\eta' \zeta^2 \leq T - \hat{T}\right\} \\
&\leq \frac{E(|T - \hat{T}|^2)}{\left(T - C_\eta' \zeta^2\right)^2} \\
&\leq \frac{L' \zeta^4 + L'  n^{-1}||V-\bar{V}\mathbf{1}||_2^2}{\left(T - C_\eta' \zeta^2\right)^2} \\
 &\leq \frac{L' \zeta^4 + L'  n^{-1}||V-\bar{V}\mathbf{1}||_2^2}{(T/2)^2} \\
&\leq \frac{L' \zeta^4 + L'L''  \zeta^2(T + \zeta^2)}{(T/2)^2} \\
&\leq \frac{4L'(1 + L'')\zeta^4}{T^2}  + \frac{4L'L''  \zeta^2}{T} \\
&\leq \frac{16L'L''^2(1 + L'')}{C^4}  + \frac{8L'L''^2 }{C^2} \\
&\leq \frac{\eta}{2}
\end{align*}
when $C_\eta$ is large enough. Taking supremum over \(f\) and \(V\) yields the desired bound on Type II error. The proof is complete. 

\end{proof}


\section{Lower bounds} \label{appendix:lower_bounds}
This section contains proofs of the lower bounds presented in Section \ref{section:lower_bounds}.

\subsection{\texorpdfstring{Proving the \(n^{-4\alpha}\) bound}{Proving the nuisance rate}}
Proposition \ref{prop:nuisance_lbound} is proved here. Recall from the discussion in Section \ref{section:lower_bounds} that the proof is, in spirit, the same as the moment matching argument of \cite{wang_effect_2008}. However, the argument requires modification as the construction in \cite{wang_effect_2008} results in homoskedastic models under both null and alternative hypotheses. Furthermore, the argument is streamlined by using the moment matching \(\chi^2\)-divergence bound proved in \cite{wu_polynomial_2020}.

\begin{proof}[Proof of Proposition \ref{prop:nuisance_lbound}]
Fix \(\eta \in (0, 1)\) and let \(0 < c < c_\eta\) where \(c_\eta\) is to be selected later. Define the function \(h(t) = e^{-1/t}\mathbbm{1}_{\{t > 0\}}\) and note \(h \in C^\infty(\R)\). Further define the function \(\psi(t) = \frac{h(t)}{h(t) + h(1-t)}\) and note \(\psi \in C^\infty(\R)\), \(\psi(t) = 1\) for \(t \geq 1\) and \(\psi(t) = 0\) for \(t \leq 0\). Define the variance function \(V_1 : [0, 1] \to \R\), 
\begin{equation*}
V_1(x) = 1 + 2cn^{-2\alpha} \psi\left(n^{\frac{2\alpha}{\lceil \beta \rceil}} \left(x - \left(\frac{1}{2} - \frac{1}{2}n^{-\frac{2\alpha}{\lceil \beta \rceil}}\right)\right)\right).
\end{equation*}

\begin{figure}[ht]
\centering
\begin{tikzpicture}[scale=0.9, declare function={
    func(\x)= (\x < 1/2-(1/(6))) * (1)   +
            and(\x >= 1/2-(1/(6)), \x < 1/2+(1/(6))) * (1+exp(-1/(3*(x - (1/2-1/6))))/(exp(-1/(3*(x - (1/2-1/6)))) + exp(-1/(1-3*(x - (1/2-1/6))))))    +
            (\x >= 1/2+(1/(6))) * (2)
;
}]
    \begin{axis}[
        axis x line=middle, axis y line=middle,
        ymin=0.7, ymax=2.3, ytick={1, 1.5, 2}, yticklabels={\(1\), \(1+cn^{-2\alpha}\), \(1+2cn^{-2\alpha}\)}, ylabel=$V_1(x)$,
        xmin=0, xmax=1, xtick={0, 1/2-1/6, 1/2+1/6, 1}, xticklabels={\(0\),\(\frac{1}{2} - \frac{1}{2}n^{-\frac{2\alpha}{\lceil\beta\rceil}}\), \(\frac{1}{2} + \frac{1}{2}n^{-\frac{2\alpha}{\lceil\beta\rceil}}\),\(1\)}, extra x ticks = {0}, xlabel=$x$,
        domain=0:1,samples=101, 
        every major tick/.append style={very thick, black},
    ]
    \draw[dashed](0, 1.5) -- (1,1.5);

    \addplot [blue, thick]{func(x)};
    \end{axis}
\end{tikzpicture}
\caption{A cartoon plot of \(V_1(x)\).}
\end{figure}

\noindent Consider that for all \(k \in \{0,...,\lceil \beta \rceil\}\), we have
\begin{align*}
|V_1^{(k)}(x)| = 2cn^{-2\alpha} n^{\frac{2\alpha k}{\lceil \beta \rceil}} \psi^{(k)}\left(n^{\frac{2\alpha}{\lceil \beta \rceil}}\left( x - \left(\frac{1}{2} - \frac{1}{2} n^{-\frac{2\alpha}{\lceil \beta \rceil}}\right)\right)\right) \leq 2c ||\psi^{(k)}||_\infty
\end{align*}
for all \(x \in [0, 1]\). Therefore, taking \(c_\eta\) sufficiently small (depending only on \(M\) and \(\beta\)) guarantees \(||V_1^{(k)}||_\infty \leq M\) for all \(k \in \{0,...,\lceil \beta \rceil\}\), which implies \(V_1 \in \mathcal{H}_\beta\). Note that we have used mean-value theorem to obtain \(|V^{\lfloor \beta \rfloor}(x) - V^{\lfloor \beta \rfloor}(y)| \leq ||V_1^{\lceil \beta \rceil}||_\infty |x-y| \leq M |x - y|^{\beta - \lceil \beta \rceil }\) since \(|x - y| \leq 1\). Further note \(V_1 \geq 0\) and \(\int_{0}^{1} V_1(x) \, dx = 1 + cn^{-2\alpha}\) by symmetry. Consider that 
\begin{equation*}
||V_1 - \bar{V}_1\mathbf{1}||_2^2 = \int_{0}^{1} (V_1(x) - 1 - cn^{-2\alpha})^2 \, dx \geq c^2n^{-4\alpha} \cdot 2 \left(\frac{1}{2} - \frac{1}{2} n^{-\frac{2\alpha}{\lceil\beta\rceil}}\right) = c^2n^{-4\alpha}(1 - n^{-\frac{2\alpha}{\lceil\beta\rceil}}) \geq \frac{1}{2}c^2n^{-4\alpha}
\end{equation*}
where we can insist on \(n\) being sufficiently large such that \(1 - n^{-\frac{2\alpha}{\lceil\beta\rceil}} \geq \frac{1}{2}\). Therefore, \(V_1 \in \mathcal{V}_{1,\beta}(cn^{-2\alpha}/\sqrt{2})\).

We will now define a prior \(\pi\) on \(\mathcal{H}_\alpha\) using the moment matching idea of \cite{wang_effect_2008}. Let \(q\) be a fixed integer such that \(2\alpha(q+1) > 1\). Let \(g : [0, 1] \to \R\) denote the function \(g(x) = (1 - 2n|x|)\mathbbm{1}_{\left\{|x| \leq \frac{1}{2n}\right\}}\). Let \(G\) denote the symmetric distribution supported on \([-B, B]\), possessing the same first \(q\) moments as the standard normal distribution asserted to exist by Lemma \ref{lemma:moment_matching}.  A draw \(f \sim \pi\) is obtained by drawing \(R_0,...,R_n \overset{iid}{\sim} G\) and setting 
\begin{equation*}
f(x) = \sum_{i=0}^{n} R_i \left(\sqrt{V_1\left(\frac{i}{n}\right) - 1}\right) g\left(x - \frac{i}{n}\right). 
\end{equation*}
Since \(\sqrt{V_1\left(\frac{i}{n}\right) - 1} \lesssim cn^{-\alpha}\), as argued in \cite{wang_effect_2008} we have \(\pi\) is supported on \(\mathcal{H}_\alpha\) (since \(c_\eta\) can be chosen sufficiently small independent of \(n\)). Therefore, we have 
\begin{align}
&\inf_{\varphi}\left\{ \sup_{\substack{f\in \mathcal{H}_\alpha, \\ V \in \mathcal{V}_{0}}} P_{f, V}\left\{ \varphi = 1 \right\} + \sup_{\substack{f \in \mathcal{H}_\alpha, \\ V \in \mathcal{V}_{1, \beta}(cn^{-2\alpha})}} P_{f, V}\left\{\varphi = 0\right\} \right\} \nonumber \\
&\geq \inf_{\varphi}\left\{ P_{\pi, \mathbf{1}}\left\{ \varphi = 1 \right\} + P_{0, V_1}\left\{\varphi = 0\right\} \right\} \nonumber \\
&= 1 - d_{TV}(P_{\pi, \mathbf{1}}, P_{0, V_1}) \nonumber \\
&\geq 1-\frac{1}{2}\sqrt{\chi^2(P_{\pi, \mathbf{1}}, P_{0, V_1})} \label{eqn:chisquare_lowerbound}
\end{align}
where \(P_{\pi, \mathbf{1}} = \int P_{f, \mathbf{1}} \pi(df)\) denotes the mixture distribution induced by \(\pi\) and \(\mathbf{1}\) denotes the constant function on \([0, 1]\) equal to one. 

Consider under \(P_{\pi, \mathbf{1}}\), we have \(Y_i \,| \, R_i \overset{ind}{\sim} N\left(R_i \sqrt{V_1\left(\frac{i}{n}\right) - 1}, 1\right)\) since \(g(0) = 1\) and \(g\) is supported on \(\left[-\frac{1}{2n}, \frac{1}{2n}\right]\). Therefore, the testing problem associated to (\ref{eqn:chisquare_lowerbound}), 
\begin{align*}
H_0 &: (Y_0,...,Y_n) \sim P_{\pi, \mathbf{1}}, \\
H_1 &: (Y_0,...,Y_n) \sim P_{0, V_1}
\end{align*}
can be written as
\begin{align*}
H_0 &: Y_i \overset{ind}{\sim} \int N\left(r \sqrt{V_1(i/n) - 1}, 1\right) \, G(dr) = \nu_{0,i} * N(0, 1), \\
H_1 &: Y_i \overset{ind}{\sim} N(0, V_1(i/n)) = \nu_{1, i} * N(0, 1)
\end{align*}
where \(\nu_{0, i}\) is the distribution of \( R \sqrt{V_1(i/n) - 1}\) for \(R \sim G\) and \(\nu_{1, i} = N(0, V_1(i/n) - 1)\). By the choice of \(G\), it follows that \(\nu_{0,i}\) and \(\nu_{1,i}\) share the first \(q\) moments. Further note \(\nu_{1,i}\) is \(\sqrt{2c}n^{-\alpha}\)-subgaussian (see Definition \ref{def:subgaussian}). Likewise, consider by Lemma \ref{lemma:bounded_subgaussian} that \(\nu_{0,i}\) is \(B\sqrt{2c}n^{-\alpha}\)-subgaussian. Without loss of generality, we can assume \(B \geq 1\) and so \(\nu_{1,i}\) is also \(B\sqrt{2c}n^{-\alpha}\)-subgaussian. By Proposition \ref{prop:chisquare_moment_matching} and Lemma \ref{lemma:chisquare_tensorization}, we have 
\begin{align*}   
\chi^2\left(\bigotimes_{i=0}^{n} \left(\nu_{1,i} * N(0,1)\right), \bigotimes_{i=0}^{n} \left(\nu_{0, i} * N(0, 1)\right)\right) &= \left(\prod_{i=0}^{n} (\chi^2(\nu_{1,i} * N(0,1) , \nu_{0, i} * N(0, 1)) + 1)\right) - 1\\
&\leq \left(1 + \frac{16}{\sqrt{q}} \frac{(2c)^{q+1}B^{2q+2}n^{-2\alpha(q+1)}}{1- 2B^2cn^{-2\alpha}} \right)^{n+1} - 1\\
& \leq \left(1 + C c^{q+1} n^{-2\alpha(q+1)}\right)^{n+1} - 1
\end{align*}
where \(C > 0\) is some constant not depending on \(n\). Since \(2\alpha(q+1) > 1\), it follows that 
\begin{align*}
\chi^2\left(\bigotimes_{i=0}^{n} \left(\nu_{1,i} * N(0,1)\right), \bigotimes_{i=0}^{n} \left(\nu_{0, i} * N(0, 1)\right)\right) 
&\leq \left(1 + \frac{Cc^{q+1}n^{1-2\alpha(q+1)}}{n+1}\right)^{n+1} - 1\\
&\leq \left(1 + \frac{Cc^{q+1}}{n+1}\right)^{n+1} - 1\\
&\leq e^{Cc_\eta^{q+1}} - 1
\end{align*}
where the value of \(C > 0\) may have changed but remains a universal constant. Taking \(c_\eta \leq \left(C^{-1}\log\left(1 + 4\eta^2\right)\right)^{\frac{1}{q+1}}\) sufficiently small as noted above, we have from (\ref{eqn:chisquare_lowerbound})
\begin{equation*}
\inf_{\varphi}\left\{ \sup_{\substack{f\in \mathcal{H}_\alpha, \\ V \in \mathcal{V}_{0}}} P_{f, V}\left\{ \varphi = 1 \right\} + \sup_{\substack{f \in \mathcal{H}_\alpha, \\ V \in \mathcal{V}_{1, \beta}(cn^{-2\alpha})}} P_{f, V}\left\{\varphi = 0\right\} \right\} \geq 1-\eta. 
\end{equation*}
The proof is complete. 
\end{proof}

\subsection{\texorpdfstring{Proving the \(n^{-\frac{4\beta}{4\beta+1}}\) bound}{Proving the nonparametric rate}}
The proof of Proposition \ref{prop:nonparam_lbound} is presented here. As discussed in Section \ref{section:lower_bounds}, the term \(n^{-\frac{4\beta}{4\beta+1}}\) has appeared in other nonparametric testing problems \cite{ingster_nonparametric_2003,gine_mathematical_2016}. While the prior construction is standard (Proposition \ref{prop:prior_construction}), bounding the resulting \(\chi^2\)-divergence is more complicated than earlier work as the alternative involves heteroskedasticity. Recall, as discussed in Section \ref{section:lower_bounds}, it can be assumed without loss of generality \(\beta > \frac{1}{4}\).

\begin{proof}[Proof of Proposition \ref{prop:nonparam_lbound}]
Fix \(\eta \in (0, 1)\) and let \(0 < c < c_\eta\) where \(c_\eta \leq 1 \) will be chosen later. Set \(m = \left\lfloor n^{\frac{2}{4\beta+1}}\right\rfloor\) and fix \(\psi \in C^{\infty}(\R)\) supported on \([0, 1]\) with \(||\psi||_2 = 1\) and \(\int \psi = 0\). Note \(m \leq n\) since \(\beta > \frac{1}{4}\). Let \(\rho = cc' n^{-\frac{2\beta + 1}{4\beta+1}}\) for a sufficiently small positive constant \(c' > 0\). For any \(\kappa \in \left\{-1, 1\right\}^m\), let \(V_\kappa\) denote the function given in Proposition \ref{prop:prior_construction}. Note that by taking \(c'\) sufficiently small (not depending on \(n\)), we have for all \(q \in \{0,1,...,\lfloor \beta \rfloor\}\),
\begin{equation*}
\rho m^{\frac{1}{2}+q} ||\psi^{(q)}||_\infty \leq c' n^{\frac{-2\beta+2q}{4\beta+1}} ||\psi^{(q)}||_\infty \leq \frac{1}{2} < 1,
\end{equation*}
and 
\begin{equation*}
\rho m^{\frac{1}{2}+\beta} \left(4||\psi^{(\lfloor \beta \rfloor)}||_\infty \vee 2 ||\psi^{(\lfloor \beta \rfloor + 1)}||_\infty \right) \leq c'  \left(4||\psi^{(\lfloor \beta \rfloor)}||_\infty \vee 2 ||\psi^{(\lfloor \beta \rfloor + 1)}||_\infty \right)  \leq 1. 
\end{equation*}
Hence, Proposition \ref{prop:prior_construction} asserts \(V_\kappa \in \mathcal{V}_{1, \beta}\left(m^{\frac{1}{2}}\rho\right)\). Let \(P_\pi\) denote the prior distribution on \(\mathcal{V}_{1, \beta}\left(m^{\frac{1}{2}}\rho\right)\) where a draw \(V \sim P_\pi\) is obtained by drawing \(\kappa \sim \Uniform\left(\{-1, 1\}^m\right)\) and setting \(V = V_\kappa\). With this prior in hand, we have 
\begin{align*}
&\inf_{\varphi}\left\{ \sup_{\substack{f \in \mathcal{H}_\alpha, \\ V \in \mathcal{V}_{0}}} P_{f, V}\left\{\varphi = 1\right\} + \sup_{\substack{f \in \mathcal{H}_\alpha, \\ V \in \mathcal{V}_{1,\beta}\left(\rho m^{1/2}\right)}} P_{f, V}\left\{\varphi = 0\right\} \right\} \\
&\geq \inf_{\varphi}\left\{ P_{0, \mathbf{1}}\left\{\varphi = 1\right\} + P_{0, \pi} \left\{\varphi = 0\right\} \right\} \\
&= 1 - d_{TV}(P_{0, \mathbf{1}}, P_{0, \pi}) \\
&\geq 1- \frac{1}{2}\sqrt{\chi^2(P_{0, \pi}, P_{0,\mathbf{1}})}
\end{align*}
where \(\mathbf{1}\) denotes the constant function equal to \(1\) on the unit interval and \(P_{0, \pi} = \int P_{0, V} \, \pi(dV)\) denotes the mixture over \(P_{0,V}\) induced by drawing \(V \sim \pi\). The penultimate line follows from the Neyman-Pearson lemma. We now work towards bounding the \(\chi^2\)-divergence. 

First, note that \(\kappa_1,...,\kappa_m \overset{iid}{\sim} \Rademacher(1/2)\). Consider that \(\{\psi_j\}_{j=1}^{m}\) in the definition of \(V_\kappa\) have disjoint support, namely \(\psi_j \in C^\infty(\R)\) and \(\psi_j\) is supported on \(\left[\frac{j-1}{m}, \frac{j}{m}\right]\). Therefore, \(V_\kappa(0) = 1 + \rho \kappa_1\psi_1(0)\). Since \(\psi_1\) is supported on \([0, 1/m]\) and is a \(C^\infty(\R)\) function, we have \(\psi_1(0) = 0\) and so \(V(0) = 1 + \rho \kappa_1 \psi_1(0) = 1\) almost surely. Therefore, \(Y_0 \,|\, \kappa \sim N(0, 1)\) for all \(\kappa\). In particular, under the prior \(\pi\), we have that \(Y_0\) is independent of \(Y_1,...,Y_n\). Moreover, \(Y_0 \sim N(0, 1)\) under both the null and the alternative hypotheses, and thus contains no distinguishing information. Hence, it can be thrown away and we can focus attention solely on \(Y_1,...,Y_n\). Without loss of generality, assume \(m\) divides \(n\) and set \(K = \frac{m}{n}\). Since the \(\{\psi_j\}_{j=1}^{m}\) in the definition of \(V_\kappa\) have disjoint support, it follows that we have the mutual independence of random vectors 
\begin{equation*}
(Y_1,...,Y_K) \indep (Y_{K+1},...,Y_{2K}) \indep ... \indep (Y_{n-K+1},...,Y_{n}). 
\end{equation*}
Further note that the marginal distributions of these \(m\) random vectors are all the same. Let us call the marginal distribution of the random vector \((Y_1,...,Y_K)\) as \(Q_0\) and \(Q_\pi\) induced by \(P_{0, \mathbf{1}}\) and \(P_{0, \pi}\) respectively. By Lemma \ref{lemma:chisquare_tensorization}, 
\begin{equation}\label{eqn:chisquare_tensorization}
\chi^2(P_{0,\pi}, P_{0,\mathbf{1}}) = \left((1 + \chi^2(Q_\pi, Q_0))^m \right) - 1. 
\end{equation}
Consider that we can write the problem of testing \(Q_\pi\) against \(Q_0\) as 
\begin{align*}
H_0 &: Y_1,...,Y_K \overset{iid}{\sim} N(0, 1), \\
H_1 &: R \sim \Rademacher\left(\frac{1}{2}\right) \text{ and } Y_k | R \overset{ind}{\sim} N(0, 1 + R\rho\psi_1(k/n)) \text{ for } 1 \leq k \leq K. 
\end{align*}
Consider that \(Y_1^2,...,Y_K^2\) are sufficient statistics, and so letting \(W_k = Y_k^2\) we can equivalently consider the problem 
\begin{align*}
H_0 &: W_1,...,W_K \overset{iid}{\sim} \chi^2_1, \\
H_1 &: R \sim \Rademacher(1/2) \text{ and } W_k | R \overset{ind}{\sim} (1 + R \rho \psi_1(k/n)) \chi^2_1 \text{ for } 1 \leq k \leq K. 
\end{align*}
Let \(\tilde{Q}_0\) and \(\tilde{Q}_\pi\) denote the marginal distribution of \((W_1,...,W_K)\) under the null and alternative respectively, and note \(\chi^2(Q_\pi, Q_0) = \chi^2(\tilde{Q}_\pi, \tilde{Q}_0)\). For \(r \in \{\pm 1\}\), denote 
\begin{equation*}
\tilde{Q}_r = \bigotimes_{k=1}^{K} (1+\rho r \psi_1(k/n)) \chi^2_1
\end{equation*}
and let \(\tilde{q}_r(w_1,...,w_K) = \prod_{k=1}^{K} \tilde{q}_r(w_k)\) denote its probability density function. Note \(\tilde{Q}_\pi = \frac{1}{2} \tilde{Q}_{-1} + \frac{1}{2}\tilde{Q}_1\). Further note \(\tilde{Q}_0 = (\chi^2_1)^{\otimes K}\) so let us denote \(\tilde{q}_0 = \prod_{k=1}^{K} \tilde{q}_0(w_k)\) as its probability density function. Let \(\tilde{q}_{\pi}(w_1,...,w_k)\) denote the probability density function of \(\tilde{Q}_\pi\). By direct calculation, 
\begin{align}
\chi^2(\tilde{Q}_\pi, \tilde{Q}_0) + 1 &= \int_{\R^K} \frac{\tilde{q}_\pi(w)^2}{\tilde{q}_0(w)} \, dw \nonumber \\
&= \frac{1}{4} \sum_{r, r' \in \{\pm 1\}}\int_{\R^K} \frac{\tilde{q}_r(w)\tilde{q}_{r'}(w)}{\tilde{q}_0(w)} \, dw \nonumber \\
&= \frac{1}{4} \sum_{r,r' \in \{\pm1\}} \prod_{k=1}^{K} \int_{\R} \frac{\tilde{q}_r(w_k)\tilde{q}_{r'}(w_k)}{\tilde{q}_0(w_k)} \, dw_k \label{eqn:tilde_chisquare}. 
\end{align}
For ease of notation, let \(v_k := \rho \psi_1(k/n)\) for \(1 \leq k \leq K\). Note from the density of the \(\chi^2_1\) distribution, 
\begin{align*}
\tilde{q}_0(w_k) &= \frac{1}{\sqrt{2} \Gamma(1/2)} \frac{1}{\sqrt{w_k}} e^{-\frac{w_k}{2}} \mathbbm{1}_{\{w_k > 0\}}, \\
\tilde{q}_r(w_k) &= \frac{1}{1+ r v_k} \tilde{q}_0 \left(\frac{w_k}{1+ r v_k}\right) = \frac{1}{\sqrt{2} \Gamma(1/2)} \frac{1}{\sqrt{1+ r v_k}}  \frac{1}{\sqrt{w_k}} e^{-\frac{w_k}{2(1+ r v_k)}} \mathbbm{1}_{\{w_k > 0\}}. 
\end{align*}
From direct calculation, 
\begin{align*}
\frac{\tilde{q}_r(w_k)\tilde{q}_{r'}(w_k)}{\tilde{q}_0(w_k)} &= \frac{1}{\sqrt{2}\Gamma(1/2)} \frac{1}{\sqrt{w_k}} \cdot \frac{1}{\sqrt{(1+r v_k)(1+r' v_k)}} \exp\left(-\frac{w_k}{2(1+r v_k)} - \frac{w_k}{2(1+r' v_k)} + \frac{w_k}{2}\right) \mathbbm{1}_{\{w_k \geq 0\}} \\
&= \left(\frac{1}{\sqrt{2}\Gamma(1/2)} \frac{1}{\sqrt{w_k}} \cdot \frac{1}{\sqrt{(1+r v_k)(1+r' v_k)}} \exp\left(-\frac{w_k}{2(1+r v_k)(1+r'v_k)}\right) \mathbbm{1}_{\{w_k \geq 0\}} \right)\\
&\;\; \cdot \exp\left(\frac{w_k rr' v_k^2}{2(1+r v_k)(1+r' v_k)}\right) \\
&= \frac{1}{(1+r v_k)(1+r' v_k)} \tilde{q}_0\left(\frac{w_k}{(1+r v_k)(1+r' v_k)}\right) \cdot \exp\left(\frac{w_k rr' v_k^2}{2(1+r v_k)(1+r' v_k)}\right),
\end{align*}
and so
\begin{equation*}
\int_{\R} \frac{\tilde{q}_r(w_k) \tilde{q}_{r'}(w_k)}{\tilde{q}_0(w_k)} \, dw_k = E\left(\exp\left(\frac{rr'v_k^2}{2} \cdot U\right)\right)
\end{equation*}
where \(U \sim \chi^2_1\). Noting \(v_k = \rho \psi_1(k/n) \leq \rho m^{1/2} ||\psi||_\infty < 1\), it follows \(v_k^2 < 1\) and so the moment generating function admits the formula \(E\left(\exp\left(\frac{rr'v_k}{2} \cdot U\right)\right) = \frac{1}{\sqrt{1 - rr'v_k^2}}\). Note by taking \(c'\) sufficiently small, we have \(v_k^2 < \frac{1}{2}\), and so by the inequality \(\frac{1}{\sqrt{1-x}} \leq e^x\) which holds for \(x < \frac{1}{2}\), it follows 
\begin{equation*}
\int_{\R} \frac{\tilde{q}_r(w_k) \tilde{q}_{r'}(w_k)}{\tilde{q}_0(w_k)} \, dw_k \leq \exp\left(rr'v_k^2\right). 
\end{equation*}
Therefore, from (\ref{eqn:tilde_chisquare}) we have 
\begin{align}
\chi^2(\tilde{Q}_\pi, \tilde{Q}_0) + 1 &\leq \frac{1}{4} \sum_{r, r' \in \{\pm 1\}} \prod_{k=1}^{K} \exp\left(rr'v_k^2\right) \nonumber \\
&= \frac{1}{2} \exp\left(\sum_{k=1}^{K} v_k^2\right) + \frac{1}{2} \exp\left(-\sum_{k=1}^{K} v_k^2\right) \nonumber \\
&= \cosh\left(\sum_{k=1}^{K} v_k^2\right) \nonumber \\
&\leq \exp\left(\left(\sum_{k=1}^{K} v_k^2\right)^2\right) \label{eqn:tilde_chisquare_simple}
\end{align}
where we have used \(\cosh(x) \leq e^{x^2}\). To continue developing the bound, observe 
\begin{align*}
\sum_{k=1}^{K} v_k^2 &= n\rho^2 \cdot \frac{1}{n} \sum_{k=1}^{K} \psi_1(k/n)^2 \\
&= n\rho^2 \sum_{k=1}^{K} \int_{\frac{k-1}{n}}^{\frac{k}{n}} \psi_1(k/n)^2 \,dx \\
&\leq 2n\rho^2 + 2n\rho^2 \sum_{k=1}^{K} \int_{\frac{k-1}{n}}^{\frac{k}{n}} |\psi_1(k/n) - \psi_1(x)|^2 \, dx
\end{align*}
where we have used the inequality \((a + b)^2 \leq 2a^2 + 2b^2\) for \(a, b \in \R\) as well as \(||\psi_1||^2 = 1\). By definition of \(\psi_1\), consider \(|\psi_1(x) - \psi_1(y)| \leq ||\psi_1'||_\infty |x - y| \leq m^{3/2} ||\psi'||_\infty |x - y|\). Therefore, 
\begin{align*}
\sum_{k=1}^{K}v_k^2 &\leq 2n\rho^2 + 2n\rho^2 m^3 ||\psi'||_\infty^2 \sum_{k=1}^{K} \int_{\frac{k-1}{n}}^{\frac{k}{n}} \left|x - \frac{k}{n}\right|^2 \, dx \\
&= 2n\rho^2 + 2n\rho^2 m^3 ||\psi'||_\infty^2 \sum_{k=1}^{K} \int_{-\frac{1}{n}}^{0} u^2 \, du \\
&= 2n\rho^2 + 2n\rho^2 \cdot \frac{Km^3 ||\psi'||_\infty^2}{3n^3} \\
&\leq 2n\rho^2 + 2n\rho^2 \cdot \frac{nm^2||\psi'||_\infty^2}{3n^3} \\
&\leq 2n\rho^2 + 2n\rho^2 \cdot \frac{||\psi'||_\infty^2}{3} \\
&\leq C_1 n\rho^2
\end{align*}
where \(C_1 > 0\) is a universal constant. Here, we have used \(m \leq n\) since \(\beta > \frac{1}{4}\). Plugging into (\ref{eqn:tilde_chisquare_simple}) yields \(\chi^2\left(\tilde{Q}_\pi, \tilde{Q}_0\right) + 1 \leq \exp\left(C_1^2 n^2\rho^4\right)\). Therefore, (\ref{eqn:chisquare_tensorization}) gives 
\begin{align*}
\chi^2(P_{0, \pi}, P_{0, \mathbf{1}}) &\leq \exp\left(C_1^2 mn^2\rho^4\right) - 1 \\
&\leq \exp\left(C_1^2 c^4(c')^4 n^{\frac{2}{4\beta+1}} \cdot n^2 \cdot n^{-\frac{8\beta+4}{4\beta+1}}\right) - 1\\
&= \exp\left(C_1^2 c^4(c')^4\right) - 1.
\end{align*}
Set \(c_\eta = 1 \wedge \left(\frac{\log\left(1 + 4\eta^2\right)}{(c')^4 C_1^2}\right)^{\frac{1}{4}}\). Since \(c < c_\eta\), we have \(\chi^2(P_{0,\pi}, P_{0, \mathbf{1}}) \leq 4\eta^2\) and so 
\begin{equation*}
\inf_{\varphi}\left\{ \sup_{\substack{f \in \mathcal{H}_\alpha, \\ V \in \mathcal{V}_{0}}} P_{f, V}\left\{\varphi = 1\right\} + \sup_{\substack{f \in \mathcal{H}_\alpha, \\ V \in \mathcal{V}_{1,\beta}\left(\rho m^{1/2}\right)}} P_{f, V}\left\{\varphi = 0\right\} \right\} \geq 1 - \frac{1}{2}\sqrt{4\eta^2} \geq 1-\eta. 
\end{equation*}
Observe that \(\rho m^{1/2} \geq c \frac{c'}{2} n^{-\frac{2\beta}{4\beta+1}}\) and so \(\mathcal{V}_{1,\beta}\left(c\frac{c'}{2}n^{-\frac{2\beta}{4\beta+1}}\right) \supset \mathcal{V}_{1,\beta}(\rho m^{1/2})\). Hence, the desired result is proved. 
\end{proof}

The construction of the prior used in the proof of Proposition \ref{prop:nonparam_lbound} is given in the following proposition. As mentioned, the construction is standard in the literature, and here we specifically follow Theorem 2.1 of \cite{arias-castro_remember_2018}. 
\begin{proposition}\label{prop:prior_construction}
Suppose \(\psi \in C^\infty(\R)\) supported on \([0,1]\) such that \(||\psi||_2 = 1\) and \(\int \psi = 0\). Suppose \(m\) is a nonnegative integer. For \(1 \leq j \leq m\), define \(\psi_j(x) := m^{1/2} \psi(mx - j + 1)\). For \(\rho > 0\) and for \(\kappa \in \left\{-1, 1\right\}^m\), define 
\begin{equation*}
V_\kappa(x) := 1 + \rho \sum_{j=1}^{m} \kappa_j \psi_j(x). 
\end{equation*}
If \(\rho m^{1/2+q}||\psi^{(q)}||_\infty \leq 1\) for all \(q \in \{0,1,...,\lfloor \beta \rfloor\}\) and \(\rho m^{1/2+\beta}(4||\psi^{(\lfloor\beta\rfloor)}||_\infty \vee 2 ||\psi^{(\lfloor \beta \rfloor + 1)} ||_\infty) \leq 1\), then \(V_\kappa \in \mathcal{V}_{1, \beta}(m^{1/2}\rho)\).
\end{proposition}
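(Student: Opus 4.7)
The plan is to verify the three defining properties of $\mathcal{V}_{1,\beta}(m^{1/2}\rho)$: nonnegativity, the $L^2$ separation $\|V_\kappa - \bar{V}_\kappa\mathbf{1}\|_2 \geq m^{1/2}\rho$, and the H\"older smoothness $V_\kappa \in \mathcal{H}_\beta$. The first two are essentially bookkeeping once one uses the structural fact that $\psi_j$ is supported on $[(j-1)/m, j/m]$, so the bumps $\{\psi_j\}$ have pairwise disjoint supports. For nonnegativity, at any $x \in [0,1]$ at most one $\psi_j(x)$ is nonzero, so $|V_\kappa(x) - 1| \leq \rho m^{1/2}\|\psi\|_\infty \leq 1$ by the $q = 0$ hypothesis. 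For the $L^2$ separation, a change of variables gives $\int \psi_j = m^{-1/2}\int \psi = 0$ and $\|\psi_j\|_2 = \|\psi\|_2 = 1$, so $\bar{V}_\kappa = 1$ and the orthogonality of the disjointly supported $\psi_j$ yields $\|V_\kappa - \mathbf{1}\|_2^2 = \rho^2 \sum_{j=1}^{m}\|\psi_j\|_2^2 = m\rho^2$.

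The real work lies in verifying $V_\kappa \in \mathcal{H}_\beta$. The sup-norm bounds on derivatives are immediate: $\psi_j^{(q)}(x) = m^{1/2+q}\psi^{(q)}(mx - j + 1)$ combined with disjoint supports gives $\|V_\kappa^{(q)}\|_\infty \leq \rho m^{1/2+q}\|\psi^{(q)}\|_\infty \leq 1 \leq M$ for each $q \in \{0,\dots,\lfloor\beta\rfloor\}$, matching the first family of hypotheses. For the H\"older continuity of the top derivative, I set $q = \lfloor\beta\rfloor$ and $\gamma = \beta - q$ and split on the size of $|x-y|$. Because $\psi \in C^\infty$, one has $V_\kappa \in C^\infty(\R)$ globally, so the mean value theorem applies regardless of whether $x$ and $y$ lie in the same subinterval $[(j-1)/m, j/m]$ or straddle a boundary. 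When $|x-y| \leq 1/m$, passing through the $(q+1)$-th derivative yields $|V_\kappa^{(q)}(x) - V_\kappa^{(q)}(y)| \leq \rho m^{1/2+q+1}\|\psi^{(q+1)}\|_\infty\,|x-y|$, and rewriting $|x-y| \leq |x-y|^\gamma\, m^{\gamma-1}$ absorbs the extra powers to give $\rho m^{1/2+\beta}\|\psi^{(q+1)}\|_\infty\,|x-y|^\gamma \leq \tfrac{1}{2}|x-y|^\gamma$, via the hypothesis $2\rho m^{1/2+\beta}\|\psi^{(q+1)}\|_\infty \leq 1$.

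The main obstacle is the complementary regime $|x-y| > 1/m$, where the MVT route through $\psi^{(q+1)}$ overshoots and produces a power of $|x-y|$ too large to be absorbed. Here I would abandon differentiating across the gap and use instead the crude two-term bound $|V_\kappa^{(q)}(x) - V_\kappa^{(q)}(y)| \leq 2\|V_\kappa^{(q)}\|_\infty \leq 2\rho m^{1/2+q}\|\psi^{(q)}\|_\infty$, then rewrite the right-hand side as $2\rho m^{1/2+\beta}\|\psi^{(q)}\|_\infty \cdot m^{-\gamma}$ and invoke $|x-y|^\gamma > m^{-\gamma}$ together with the hypothesis $4\rho m^{1/2+\beta}\|\psi^{(q)}\|_\infty \leq 1$ to conclude that the left-hand side is at most $\tfrac{1}{2}|x-y|^\gamma \leq M|x-y|^\gamma$. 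This two-regime split is precisely what forces both $\|\psi^{(\lfloor\beta\rfloor)}\|_\infty$ and $\|\psi^{(\lfloor\beta\rfloor+1)}\|_\infty$ to appear in the maximum in the hypothesis: the first controls far-apart pairs, and the second controls nearby pairs.
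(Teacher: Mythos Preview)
Your proof is correct and follows essentially the same route as the paper's: both verify nonnegativity and the $L^2$ separation via the disjoint supports of the $\psi_j$, then handle the H\"older condition by balancing a sup-norm bound (controlled by $\|\psi^{(\lfloor\beta\rfloor)}\|_\infty$) against a mean-value-theorem bound (controlled by $\|\psi^{(\lfloor\beta\rfloor+1)}\|_\infty$). The only cosmetic difference is that the paper packages your two-regime split $|x-y|\lessgtr 1/m$ into the single line $A\wedge (B\cdot m|x-y|)\le (A\vee B)(1\wedge m|x-y|)\le (A\vee B)(m|x-y|)^{\beta-\lfloor\beta\rfloor}$, which is exactly your case analysis in compressed form.
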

\begin{proof}
Note \(\psi_j\) is supported on \([(j-1)/m, j/m]\) with \(||\psi_j||_2 = 1\) and \(\int \psi_j = 0\). Therefore, \(\int V_\kappa(x) \, dx = 1\). Further consider that since the \(\{\psi_j\}_{j=1}^{m}\) have disjoint supports, we have 
\begin{equation*}
||V_\kappa - \bar{V}_\kappa \mathbf{1}||_2^2 = ||V_\kappa - \mathbf{1}||_2^2 = \rho^2 \sum_{j=1}^{m} ||\psi_j||^2 = m\rho^2. 
\end{equation*}

Now let us show \(V_\kappa\) is nonnegative. Since the \(\{\psi_j\}_{j=1}^{m}\) have disjoint supports, it follows immediately that \(V_\kappa \geq 0\) since \(\rho m^{1/2} ||\psi||_\infty \leq 1\). Now let us show \(V_\kappa \in \mathcal{H}_\beta\). First, note \(V_\kappa \in C^\infty(\R)\) and so it has derivatives of all orders. Next, since \(\rho m^{1/2+q}||\psi^{(q)}||_\infty \leq 1\) for all \(q \in \{0,1,...,\lfloor \beta \rfloor\}\), it also follows from the disjoint supports of \(\{\psi_j\}_{j=1}^{m}\) that \(||V_\kappa^{(q)}||_\infty \leq 1\) for all \(q \in \{0,1,...,\lfloor \beta \rfloor\}\). 

Second, let us fix \(x, y \in [0, 1]\) and let \(1 \leq k, \ell \leq m\) such that \(\frac{k-1}{m} \leq x \leq \frac{k}{m}\) and \(\frac{\ell-1}{m} \leq y \leq \frac{\ell}{m}\). Then 
\begin{align*}
|V_\kappa^{(\lfloor \beta \rfloor)}(x)- V_\kappa^{(\lfloor \beta \rfloor)}(y)| &= \rho m^{1/2} \left|\sum_{j=1}^{m} \kappa_j \left(\psi_j^{(\lfloor \beta \rfloor)}(x) - \psi_j^{(\lfloor \beta \rfloor)}(y)\right)  \right| \\
&\leq \rho m^{1/2} \left(\left| \psi_k^{(\lfloor \beta \rfloor)}(x) - \psi_k^{(\lfloor \beta \rfloor)}(y) \right| + \left| \psi_\ell^{(\lfloor \beta \rfloor)}(x) - \psi_\ell^{(\lfloor \beta \rfloor)}(y) \right|\right) \\
&= \rho m^{1/2 + \lfloor \beta \rfloor} \left(\left|\psi^{(\lfloor \beta \rfloor)}(mx-k+1) - \psi^{(\lfloor \beta \rfloor)}(my-k+1)\right| \right. \\
&\;\;\; \left. + \left|\psi^{(\lfloor \beta \rfloor)}(mx-\ell+1) - \psi^{(\lfloor \beta \rfloor)}(my-\ell+1)\right|\right) \\
&\leq \rho m^{1/2 + \lfloor \beta \rfloor} \left( 4||\psi^{(\lfloor \beta \rfloor)}||_\infty \wedge \left( 2||\psi^{(\lfloor \beta \rfloor + 1)}||_\infty m |x - y|\right) \right) \\
&\leq \rho m^{1/2 + \lfloor \beta \rfloor} \left( 4||\psi^{(\lfloor \beta \rfloor)}||_\infty \vee 2||\psi^{(\lfloor \beta \rfloor + 1)}||_\infty \right)(1 \wedge m|x-y|) \\
&\leq \rho m^{1/2 + \lfloor \beta \rfloor} \left( 4||\psi^{(\lfloor \beta \rfloor)}||_\infty \vee 2||\psi^{(\lfloor \beta \rfloor + 1)}||_\infty \right)(1 \wedge m|x-y|)^{\beta - \lfloor \beta \rfloor} \\
&\leq \rho m^{1/2 + \beta} \left( 4||\psi^{(\lfloor \beta \rfloor)}||_\infty \vee 2||\psi^{(\lfloor \beta \rfloor + 1)}||_\infty \right) |x - y|^{\beta - \lfloor \beta \rfloor} \\
&\leq |x - y|^{\beta - \lfloor \beta \rfloor}. 
\end{align*}
The proof is complete. 
\end{proof}

\subsection{\texorpdfstring{Proving the \(n^{-2\beta}\) term}{Proving the fixed design term}}
The proof of Proposition \ref{prop:fixed_design_lbound} is presented here. As noted in Section \ref{section:lower_bounds}, it suffices to consider \(\beta \leq \frac{1}{4}\). 
\begin{proof}[Proof of Proposition \ref{prop:fixed_design_lbound}]
Fix \(\eta \in (0, 1)\) and let \(0 < c < c_\eta\) where \(c_\eta\) is to be selected later. Define the function \(V_1 : [0, 1] \to \R\) to be the piecewise linear function with the following knots and values 
\begin{equation*}
V_1(x) = 
\begin{cases}
1 &\textit{if } x \in \left\{\frac{i}{2n}, \frac{i}{n}\right\} \text{ for some } 0 \leq i \leq n, \\
1 + \sqrt{3} cn^{-\beta} &\textit{if } x = \frac{i}{n} + \frac{1}{4n} \text{ for some } 0 \leq i \leq n-1, \\
1 - \sqrt{3} cn^{-\beta} &\textit{if } x = \frac{i}{n} + \frac{3}{4n} \text{ for some } 0 \leq i \leq n-1. 
\end{cases}
\end{equation*}
\begin{figure}[ht!]
\centering
\begin{tikzpicture}[scale=0.9, declare function={
func(\x)= and(\x >= 0, \x < 1/4) * (4*(1/2)*(x-0)+1)     +
and(\x >= 1/4, \x < 1/2) *(4*(-1/2)*(x-1/4)+(1+1/2))
+ and(\x >= 1/2, \x < 3/4)*(((-1)*4*(1/2)*(x-1/2)+1)) +
and(\x >= 3/4, \x < 1)*((-1)*4*(-1/2)*(x-3/4)+(1-1/2))
+
and(\x >= 1, \x < 1+1/4) * (4*(1/2)*(x-1)+1)     +
and(\x >= 1+1/4, \x < 1+1/2) *(4*(-1/2)*(x-1/4-1)+(1+1/2))
+ and(\x >= 1+1/2, \x < 1+3/4)*(((-1)*4*(1/2)*(x-1/2-1)+1)) +
and(\x >= 1+3/4, \x < 2)*((-1)*4*(-1/2)*(x-3/4-1)+(1-1/2))
+
and(\x >= 2, \x < 2+1/4) * (4*(1/2)*(x-2)+1)     +
and(\x >= 2+1/4, \x < 2+1/2) *(4*(-1/2)*(x-1/4-2)+(1+1/2))
+ and(\x >= 2+1/2, \x < 2+3/4)*(((-1)*4*(1/2)*(x-1/2-2)+1)) +
and(\x >= 2+3/4, \x < 3)*((-1)*4*(-1/2)*(x-3/4-2)+(1-1/2))
+
and(\x >= 3, \x < 3+1/4) * (4*(1/2)*(x-3)+1)     +
and(\x >= 3+1/4, \x < 3+1/2) *(4*(-1/2)*(x-1/4-3)+(1+1/2))
+ and(\x >= 3+1/2, \x < 3+3/4)*(((-1)*4*(1/2)*(x-1/2-3)+1)) +
and(\x >= 3+3/4, \x <= 4)*((-1)*4*(-1/2)*(x-3/4-3)+(1-1/2))
;
}]
\begin{axis}[
    axis x line=middle, axis y line=middle,
    ymin=0.1, ymax=2.1, ytick={1/2, 1, 3/2}, yticklabels={$1-\sqrt{3}cn^{-\beta}$, $1$, $1+\sqrt{3}cn^{-\beta}$}, ylabel=$V_1(x)$,
    xmin=0, xmax=4.1, xtick={0,1,2,3,4}, xticklabels={$0$,$\frac{1}{n}$, $\frac{2}{n}$, $\frac{3}{n}$, $\frac{4}{n}$},  xlabel=$x$,
    domain=0:4,samples=101, 
    extra x ticks = {0},
    every major tick/.append style={very thick, black},
]
\draw[dashed](0,1) -- (4,1);

\addplot [blue, thick]{func(x)};
\end{axis}
\end{tikzpicture}
\caption{A cartoon plot of \(V_1(x)\) zoomed in to the domain \(\left[0, \frac{4}{n}\right]\). The extreme points occur at \(\left\{\frac{1}{4n}, \frac{3}{4n}, \frac{5}{4n}, \frac{7}{4n},...\right\}\). The function takes value one at the half-integers \(\left\{0, \frac{1}{2n},\frac{1}{n},\frac{3}{2n},...\right\}\).}\label{fig:fixed_design_lbound}
\end{figure}
\noindent In particular, using the spike function \(g(x) = \sqrt{3}cn^{-\beta}\left(1 - 4n|x|\right)\mathbbm{1}_{\left\{|x| \leq \frac{1}{4n}\right\}}\), we can write for \(0\leq x \leq 1\),
\begin{equation}\label{def:V1_n2beta}
V_1(x) = 1 + \sum_{i=0}^{n-1} g\left(x - \left(\frac{i}{n} + \frac{1}{4n}\right)\right) - g\left(x - \left(\frac{i}{n} + \frac{3}{4n}\right)\right). 
\end{equation}
\noindent Note that by taking \(c_\eta\) sufficiently small (depending on \(\beta\)), we have \(V_1 \in \mathcal{H}_\beta\). Further consider that \(\int_{0}^{1} V_1(x) \, dx = 1\) and so from (\ref{def:V1_n2beta}) we have
\begin{align*}
\int_{0}^{1} (V_1(x) - 1)^2 \,dx &= 4n \cdot \int_{0}^{\frac{1}{4n}} (V_1(x) - 1)^2 \, dx \\
&= 4n \int_{0}^{\frac{1}{4n}} \left(\sqrt{3}cn^{-\beta} \left(1-4n\left|x - \frac{1}{4n}\right|\right)\right)^2 \, dx \\
&= \left(3c^2n^{-2\beta}\right) \cdot 4n \int_{0}^{\frac{1}{4n}} \left(1 - 4n\left(\frac{1}{4n} - x\right)\right)^2 \, dx\\
&= \left(3c^2n^{-2\beta}\right) \cdot 4n \int_{0}^{\frac{1}{4n}} \left(4nx\right)^2 \, dx\\
&= \left(3c^2n^{-2\beta}\right) \cdot (4n)^3 \cdot \int_{0}^{\frac{1}{4n}} x^2 \, dx \\
&= c^2 n^{-2\beta}. 
\end{align*}
Therefore, \(V_1 \in \mathcal{V}_{1, \beta}\left(cn^{-\beta}\right)\). With \(V_1\) in hand, consider that 
\begin{equation}\label{eqn:fixed_design_lbound}
\inf_{\varphi}\left\{ \sup_{\substack{f \in \mathcal{H}_\alpha, \\ V \in \mathcal{V}_0}} P_{f,V}\left\{\varphi = 1\right\} + \sup_{\substack{f \in \mathcal{H}_\alpha, \\ V \in \mathcal{V}_{1, \beta}\left(cn^{-\beta}\right)}} P_{f,V}\left\{\varphi = 0\right\} \right\} \geq \inf_{\varphi}\left\{ P_{0,\mathbf{1}}\left\{\varphi = 1\right\} + P_{0, V_1}\left\{\varphi = 0\right\} \right\}.
\end{equation}
The right hand side corresponds to the two point testing problem 
\begin{align*}
H_0 &: f \equiv 0, V \equiv 1, \\
H_1 &: f \equiv 0, V = V_1. 
\end{align*}
Since \(V_1(i/n) = 1\) for all \(0 \leq i \leq n\), the testing problem is precisely 
\begin{align*}
H_0 &: Y_0,...,Y_n \overset{iid}{\sim} N(0, 1), \\
H_1 &: Y_0,...,Y_n \overset{iid}{\sim} N(0, 1). 
\end{align*}
The null and alternative hypotheses are exactly the same and so the proof is complete. 
\end{proof}

\section{Arbitrary variance profile}\label{appendix:arbitrary_variance}
This section contains the proofs for the results in Section \ref{section:profile}. 

\subsection{Upper bound}
The results stated in Section \ref{section:profile_methodology} and \ref{section:profile_upper_bound} are proved here. 
\begin{proof}[Proof of Proposition \ref{prop: MSE of T_hat beta<1/4}] 
To reduce the notational clutter, the sums range from $0$ to $n-1$ whenever the limits are not specified in what follows. For ease of notation, we will simply write \(P\) and \(E\) instead of \(P_{f,V}\) and \(E_{f,V}\). Recall the notation \(R_i = Y_{i+1} - Y_i\), \(W_i = V(x_i)+V(x_{i+1})\), \(\delta_i = f(x_{i+1}) - f(x_i)\) for \(1 \leq i\leq n-1\), and \(\bar{W}_n\) and \(\overline{\delta^2_n}\) are the respective averages. Note \(E(R_i^2) = W_i + \delta_i^2\) and \(E(R_i^4) = 3W_i^2 + 6W_i\delta_i^2 + \delta_i^4\). Further note that \(|\delta_i| \lesssim n^{-(\alpha \wedge 1)}\) since \(f \in \mathcal{H}_\alpha\). By direct calculation,
\begin{align*}
 E(\hat{T}) &= \frac{1}{2n^2} \sum_{|i-j| \geq 2} \frac{1}{3} \left( 3W_i^2 + 6W_i\delta_i^2 + \delta_i^4 + 3W_j^2 + 6W_j\delta_j^2 + \delta_j^4 \right) - 2(W_i + \delta_i^2)(W_j + \delta_j^2) \\
 &= \frac{1}{2n^2} \sum_{|i-j| \geq 2} \frac{1}{3} \left( 3W_i^2 + 6W_i\delta_i^2 + 3\delta_i^4 + 3W_j^2 + 6W_j\delta_j^2 + 3\delta_j^4 \right) - 2(W_i + \delta_i^2)(W_j + \delta_j^2) - \frac{2}{3}(\delta_i^4+\delta_j^4)\\
 &= \frac{1}{2n^2} \sum_{|i-j| \geq 2} (W_i + \delta_i^2 - W_j - \delta_j^2)^2 - \frac{2}{3} (\delta_i^4 + \delta_j^4) \\
 &= \left(\frac{1}{2n^2} \sum_{|i-j| \geq 2} (W_i + \delta_i^2 - W_j - \delta_j^2)^2\right) + O(n^{-4(\alpha \wedge 1)}) \\
 &= \left(\frac{1}{2n^2} \sum_{|i-j| \geq 1} (W_i + \delta_i^2 - W_j - \delta_j^2)^2\right) + O(n^{-1}) + O(n^{-4(\alpha \wedge 1)}) \\
 &= \left(\frac{1}{n^2} \sum_{i < j} (W_i + \delta_i^2 - W_j - \delta_j)^2\right) + O(n^{-1} + n^{-4(\alpha \wedge 1)}) \\
 &= \left(\frac{1}{n} \sum_{i=1}^{n-1} \left(W_i + \delta_i^2 - \bar{W}_n - \overline{\delta^2_n}\right)^2\right) + O(n^{-1} + n^{-4(\alpha \wedge 1)}) \\
 &= T + O(n^{-1} + n^{-4(\alpha \wedge 1)}).
\end{align*}
Here, we have used the identity \(\sum_{i < j} (a_i - a_j)^2 = (n-1)\sum_{i=1}^{n-1} (a_i - \bar{a}_n)^2\) for any real numbers \(a_1,...,a_{n-1} \in \R\), where \(\bar{a}_n = \frac{1}{n-1} \sum_{i=1}^{n-1} a_i\). We have also used the boundedness of \(f\) and \(V\). With the bias handled, let us now investigate the variance. Consider that by independence and the boundedness of \(V\) and \(f\), we have 
\begin{align*}
\Var(\hat{T}) &= \frac{1}{4n^4} \sum_{|i-j| \geq 2} \sum_{|k-l| \geq 2} \Cov\left(\frac{1}{3}(R_i^4 + R_j^4) - 2R_i^2R_j^2, \frac{1}{3}(R_k^4 + R_l^4) - 2R_k^2R_l^2\right) \\
&= \frac{1}{4n^4} \sum_{|i-j| \geq 2} \sum_{\substack{|k-l| \geq 2, \\ \{k, l\} \cap \{i-1,i,i+1, j-1,j,j+1\} \neq \emptyset}} \Cov\left(\frac{1}{3}(R_i^4 + R_j^4) - 2R_i^2R_j^2, \frac{1}{3}(R_k^4 + R_l^4) - 2R_k^2R_l^2\right) \\
&\lesssim \frac{1}{n^4} \sum_{|i-j| \geq 2} n \\
&\asymp \frac{n^3}{n^4} \\
&= n^{-1}. 
\end{align*}
\noindent Putting together the bias and variance bounds yields \(E\left(\left|\hat{T} - T\right|^2\right) \lesssim n^{-1} + n^{-8(\alpha \wedge 1)}\) as claimed. 
\end{proof}

\begin{proof}[Proof of Proposition \ref{prop:mse_T1_tilde}]
For notational ease, write \(P\) and \(E\) instead of \(P_{f, V}\) and \(E_{f, V}\). Note \(R_{i+1}\) and \(S_i\) are independent since \(R_{i+1} = Y_{i+2} - Y_{i+1}\) and \(S_i = Y_{i+3} - Y_i\). Consider that \(E(R_{i+1}^2) = W_{i+1} + \delta_{i+1}^2\) and \(E(S_i^2) = V_{i+3} + V_i + (f(x_{i+3}) - f(x_i))^2\). Further, note \(E(R_{i+1}^4) = 3W_{i+1}^2 + 6W_{i+1}\delta_{i+1}^2 + \delta_{i+1}^4\) and \(E(S_i^4) = 3(V_{i+3}+V_{i})^2 + 6(V_{i+3} + V_i)(f(x_{i+3}) - f(x_i))^2 + (f(x_{i+3}) - f(x_i))^4\). Thus, calculations similar to those of the proof of Proposition \ref{prop: MSE of T_hat beta<1/4} yield 
\begin{align*}
E\left(\hat{T}_1\right) &= \frac{1}{n} \sum_{i=1}^{n-3} \left(W_{i+1} + \delta_{i+1}^2 - (V_i + V_{i+3}) - (f(x_{i+3}) - f(x_i))^2\right)^2 - \frac{2}{3}\left(\delta_{i+1}^4 + (f(x_{i+3}) - f(x_i))^4\right) \\
&= \tilde{T}_1 + O\left(n^{-4(\alpha \wedge 1)}\right).
\end{align*}
The variance is bounded by an entirely analogous argument to that found in the proof of Proposition \ref{prop: MSE of T_hat beta<1/4}. Consider that by the boundedness of \(f\) and \(V\), we have
\begin{align*}
\Var\left(\hat{T}_1\right) &= \frac{1}{n^2} \sum_{i=1}^{n-3} \sum_{j=1}^{n-3} \Cov\left(\frac{1}{3} \left(R_{i+1}^4 + S_{i}^4\right) - 2R_{i+1}^2 S_i^2, \frac{1}{3} \left(R_{j+1}^4 + S_{j}^4\right) - 2R_{j+1}^2 S_j^2\right) \\
&= \frac{1}{n^2} \sum_{|i-j| < 4} \Cov\left(\frac{1}{3} \left(R_{i+1}^4 + S_{i}^4\right) - 2R_{i+1}^2 S_i^2, \frac{1}{3} \left(R_{j+1}^4 + S_{j}^4\right) - 2R_{j+1}^2 S_j^2\right) \\
&\lesssim n^{-1}. 
\end{align*}
where we have used \((R_{i+1}, S_i) \indep (R_{j+1}, S_j)\) for \(|i-j| \geq 4\). Putting together the bias and variance bounds yields the desired result.
\end{proof}

\begin{proof}[Proof of Proposition \ref{prop:mse_T2_tilde}]
The proof is entirely analogous to the proof of Proposition \ref{prop:mse_T1_tilde}.
\end{proof}

\begin{proof}[Proof of Theorem \ref{thm:beta_zero_upper}]
Fix \(\eta \in (0, 1)\) and let \(C > C_\eta\) where \(C_\eta\) is to be set. We will note where \(C_\eta'\) and \(C_\eta\) need to be taken sufficiently large depending only on \(\eta\). First, the Type I error will be bounded. Fix \(f \in \mathcal{H}_\alpha\) and \(V \in \Sigma_0\). For ease of notation, \(P\) and \(E\) will be used in place of \(P_{f, V}\) and \(E_{f, V}\). Since \(V \in \Sigma_0\) implies \(T, \tilde{T}_1, \tilde{T}_2 \lesssim n^{-4(\alpha \wedge 1)}\), it follows by Propositions \ref{prop: MSE of T_hat beta<1/4}, \ref{prop:mse_T1_tilde}, and \ref{prop:mse_T2_tilde} that 
\begin{equation*}
E\left(\hat{S}^2\right) \lesssim E\left(\left|\hat{S} - (T + \tilde{T}_1 + \tilde{T}_2)\right|^2\right) + n^{-8(\alpha \wedge 1)} \lesssim n^{-1} + n^{-8(\alpha \wedge 1)}. 
\end{equation*}
Therefore, Chebyshev's inequality yields 
\begin{align*}
P\left\{\hat{S} > C_\eta'\left(n^{-4\alpha} + n^{-1/2}\right) \right\} &\leq \frac{E\left(\hat{S}^2\right)}{C_\eta'^2 \left(n^{-4\alpha} + n^{-1/2}\right)^2} \leq \frac{C_1\left(n^{-1} + n^{-8(\alpha \wedge 1)}\right)}{C_\eta'^2(n^{-8\alpha} + n^{-1})} \leq \frac{C_1}{C_\eta'^2} \leq \frac{\eta}{2}
\end{align*}
where \(C_1 > 0\) is a universal constant whose value may change from instance to instance. Note we have used \(n^{-1} + n^{-8(\alpha \wedge 1)} \asymp n^{-1} + n^{-8\alpha}\). The final inequality follows from taking \(C_\eta' > 0\) sufficiently large. Since \(f \in \mathcal{H}_\alpha\) and \(V \in \Sigma_0\) were arbitrary, this bound holds uniformly over \(f \in \mathcal{H}_\alpha\) and \(V \in \Sigma_0\). The Type I error is thus bounded.

It remains to bound the Type II error. Fix \(f \in \mathcal{H}_\alpha\) and \(V \in \Sigma_1\left(C(n^{-2\alpha} + n^{-1/4})\right)\). By Proposition \ref{prop:signal}, we have \(\frac{1}{n} \sum_{i=1}^{n} \left(V_i - \bar{V}\right)^2 \lesssim n^{-1} + n^{-4(\alpha \wedge 1)} + T + \tilde{T}_1 + \tilde{T}_2\). Since \(C_\eta\) is sufficiently large and \(\frac{1}{n} \sum_{i=1}^{n} \left(V_i - \bar{V}\right)^2 \geq C^2\left(n^{-2\alpha} + n^{-1/4}\right)^2 \geq C^2\left(n^{-4\alpha} + n^{-1/2}\right)\), we have \(T + \tilde{T}_1 + \tilde{T}_2 \geq \frac{C^2}{L'} \left(n^{-4\alpha} + n^{-1/2}\right)\) for some universal constant \(L'\). Consider that for sufficiently large \(C_\eta\), an application of Propositions \ref{prop: MSE of T_hat beta<1/4}, \ref{prop:mse_T1_tilde}, and \ref{prop:mse_T2_tilde} yields 
\begin{align*}
P\left\{\hat{S} \leq C_\eta'(n^{-4\alpha} + n^{-1/2}) \right\} &= P\left\{T + \tilde{T}_1 + \tilde{T}_2 - C_\eta'(n^{-4\alpha} + n^{-1/2}) \leq T + \tilde{T}_1 + \tilde{T}_2 - \hat{S}\right\} \\
&\leq \frac{E\left( \left|\hat{S} - T - \tilde{T}_1 - \tilde{T}_2\right|^2\right)}{\left(T + \tilde{T}_1 + \tilde{T}_2 - C_\eta'(n^{-4\alpha} + n^{-1/2})\right)^2} \\
&\leq \frac{C_2 \left(n^{-1} + n^{-8(\alpha \wedge 1)}\right)}{\left(\frac{C^2}{L'} - C_\eta'\right)^2 \left(n^{-4\alpha} + n^{-1/2}\right)^2} \\
&\leq \frac{C_2}{\left(\frac{C^2}{L'} - C_\eta'\right)^2} \\
&\leq \frac{\eta}{2}
\end{align*}
where \(C_2 > 0\) is a universal constant whose value may change from instance to instance. Note we have used \(n^{-1} + n^{-8(\alpha \wedge 1)} \asymp n^{-1} + n^{-8\alpha}\). The final inequality follows from taking \(C_\eta\) sufficiently large. The Type II error is suitably bounded and so the desired result is proved.
\end{proof}

\subsection{Lower bound}\label{section:beta_zero_lower_proof}

\begin{proof}[Proof of Proposition \ref{prop:fourth_moment_info}]
Fix \(\eta \in (0, 1)\) and let \(0 < c < c_\eta\) where we take \(c_\eta = 1\). To define a prior \(\pi\) on \(\Sigma_1(c)\), first consider the distribution \(\tilde{\pi} = \left(\frac{1}{2}\delta_1 + \frac{1}{2}\delta_M\right)^{\otimes n}\) on \([0, M]^n\). To obtain a prior supported on \(\Sigma_1(c)\), we will condition on a high probability event under \(\tilde{\pi}\). For \(V \sim \tilde{\pi}\), define the event 
\begin{equation*}
\mathcal{E} = \left\{ \frac{1}{n} \sum_{i=1}^{n} \left(V_i - \bar{V}_n\right)^2 \geq 1\right\}.
\end{equation*}
For \(V \sim \tilde{\pi}\), consider \(E\left(\frac{1}{n} \sum_{i=1}^{n} \left(V_i - \bar{V}_n\right)^2\right) = \frac{n-1}{n} \cdot \frac{(M-1)^2}{4}\) and \(\Var\left(\frac{1}{n} \sum_{i=1}^{n} \left(V_i - \bar{V}_n\right)^2\right) \lesssim \frac{1}{n}\). Since \(M\) is a sufficiently large constant, by Chebyshev's inequality we have 
\begin{equation*}
P\left\{\frac{1}{n} \sum_{i=1}^{n}\left(V_i - \bar{V}_n\right)^2 \leq 2 - t\right\} \leq P\left\{\frac{1}{n} \sum_{i=1}^{n}\left(V_i - \bar{V}_n\right)^2 \leq \frac{n-1}{n}\cdot \frac{(M-1)^2}{4} - t\right\} \lesssim \frac{1}{nt^2}
\end{equation*}
for \(t > 0\). Therefore, taking \(t = 1\) yields \(\tilde{\pi}(\mathcal{E}) \geq 1 - \eta\) where we have used \(n\) is sufficiently large by assumption. The prior \(\pi\) can now be defined as the probability measure on \(\R^n\) such that for any event \(A\), 
\begin{equation*}
\pi(A) = \frac{\tilde{\pi}(A \cap \mathcal{E})}{\tilde{\pi}(\mathcal{E})}.
\end{equation*}
By definition of \(\mathcal{E}\), it follows \(\pi\) is supported on \(\Sigma_1(1)\). Next, define the noise distribution \(Q_\xi\) to be the distribution of the random variable \(\sqrt{\frac{2}{1+M}} U\) where \(U \sim \frac{1}{2}N(0, 1) + \frac{1}{2}N(0, M)\). Note \(E\left(\left(\sqrt{\frac{2}{1+M}}U\right)^2\right) = \frac{2}{1+M} \left(\frac{1}{2} + \frac{M}{2}\right) = 1\) and \(E\left(U^4\right) \lesssim 1\). Therefore, \(P_\xi \in \Xi\).

With \(\pi\) and \(P_\xi\) in hand, we are in position to bound the minimax testing risk from below by the testing risk in a Bayes testing problem. Specifically, consider the problem 
\begin{align*}
H_0 &: f \equiv 0, V = \frac{1+M}{2} \mathbf{1}_n, \text{ and } \xi_i \overset{iid}{\sim} Q_\xi, \\
H_1 &: f \equiv 0, V \sim \tilde{\pi}, \text{ and } \xi_i \overset{iid}{\sim} N(0, 1),
\end{align*}
where \(\mathbf{1}_n \in \R^n\) denotes the vector with all entries equal to one. Since \(c < c_\eta = 1\) implies \(\Sigma_1(c) \supset \Sigma_1(1)\), we have 
\begin{align*}
&\inf_{\varphi}\left\{ \sup_{\substack{f \in \mathcal{H}_\alpha, \\ V \in \Sigma_0, \\ P_\xi \in \Xi}} P_{f, V, P_\xi}\left\{\varphi = 1 \right\} + \sup_{\substack{f \in \mathcal{H}_\alpha, \\ V \in \Sigma_1(c), \\ P_\xi \in \Xi}} P_{f, V, P_\xi}\left\{\varphi = 0\right\}\right\} \\
&\geq \inf_{\varphi}\left\{ \sup_{\substack{f \in \mathcal{H}_\alpha, \\ V \in \Sigma_0, \\ P_\xi \in \Xi}} P_{f, V, P_\xi}\left\{\varphi = 1 \right\} + \sup_{\substack{f \in \mathcal{H}_\alpha, \\ V \in \Sigma_1(1), \\ P_\xi \in \Xi}} P_{f, V, P_\xi}\left\{\varphi = 0\right\} \right\} \\
&\geq 1 - d_{TV}\left(P_{0,\frac{1+M}{2}\mathbf{1}_n, Q_\xi}, P_{0, \pi, N(0, 1)}\right) \\
&\geq 1-d_{TV}\left(P_{0,\frac{1+M}{2}\mathbf{1}_n,Q_\xi}, P_{0, \tilde{\pi}, N(0, 1)}\right) - d_{TV}\left(P_{0, \tilde{\pi}, N(0, 1)},P_{0, \pi, N(0, 1)}\right) 
\end{align*}
where \(P_{0, \pi, N(0, 1)} = \int P_{0, V, N(0, 1)} \,\pi(dV)\) is the mixture induced by \(\pi\). The final line is a consequence of triangle inequality. By definition of \(\pi\), it follows \(d_{TV}\left(P_{0, \tilde{\pi}, N(0, 1)},P_{0, \pi, N(0, 1)}\right) \leq \tilde{\pi}(\mathcal{E}^c) \leq \eta\). It remains to bound \(d_{TV}\left(P_{0,\frac{1+M}{2}\mathbf{1}_n,Q_\xi}, P_{0, \tilde{\pi}, N(0, 1)}\right)\). This total variation distance is the optimal testing error in the problem 
\begin{align*}
H_0 &: f \equiv 0, V = \frac{1+M}{2} \mathbf{1}_n, \text{ and } \xi_i \overset{iid}{\sim} Q_\xi, \\
H_1 &: f \equiv 0, V \sim \tilde{\pi} \text{ and } \xi_i \overset{iid}{\sim} N(0, 1).
\end{align*}
Recall \(\tilde{\pi} = \left(\frac{1}{2}\delta_1 + \frac{1}{2}\delta_M\right)^{\otimes n}\) and \(Q_\xi\) is the distribution of \(\sqrt{\frac{2}{1+M}} U\) where \(U \sim \frac{1}{2}N(0, 1) + \frac{1}{2}N(0, M)\). Therefore, under both \(H_0\) and \(H_1\) we have \(V_i^{1/2}\xi_i \overset{iid} {\sim} \frac{1}{2}N(0, 1) + \frac{1}{2}N(0, M)\). Since \(f \equiv 0\) under both \(H_0\) and \(H_1\), the marginal distribution of the data agree 
\begin{align*}
H_0 &: Y_1,...,Y_n \overset{iid}{\sim} \frac{1}{2}N(0, 1) + \frac{1}{2}N(0, M), \\
H_1 &: Y_1,...,Y_n \overset{iid}{\sim} \frac{1}{2} N(0, 1) + \frac{1}{2}N(0, M).
\end{align*}
Therefore, \(d_{TV}\left(P_{0,\frac{1+M}{2}\mathbf{1}_n,Q_\xi}, P_{0, \tilde{\pi}, N(0, 1)}\right) = 0\) and so the proof is complete.
\end{proof}

\begin{proof}[Proof of Theorem \ref{thm:beta_zero_lower}]
Fix \(\eta \in (0, 1)\) and let \(0 < c < c_\eta\) where \(c_\eta\) is to be set later. Let \(0 < \tau < 1\) where \(\tau\) will be chosen later. A prior \(\pi\) will be defined on \(\Sigma_1(c n^{-1/4})\). First, define \(\tilde{\pi}\) where a draw \(V \sim \tilde{\pi}\) is obtained by drawing \(R_1,...,R_n \overset{iid}{\sim} \Rademacher\left(\frac{1}{2}\right)\) and setting \(V_i = 1 + \tau + \rho R_i\) where \(\rho = \sqrt{2}cn^{-1/4}\). Since \(n\) is sufficiently large, we have \(V_i \geq 0\). Observe that \(\bar{V}_n = 1 + \tau + \rho\bar{R}_n\) where \(\bar{R}_n = \frac{1}{n}\sum_{i=1}^{n} R_i\) and so \(\frac{1}{n} \sum_{i=1}^{n} \left(V_i - \bar{V}_n\right)^2 = \rho^2 \cdot \frac{1}{n} \sum_{i=1}^{n} (R_i - \bar{R}_n)^2\). It is clear that \(\tilde{\pi}\) is not supported on \(\Sigma_1(cn^{-1/4})\) since \(\frac{1}{n} \sum_{i=1}^{n} (R_i - \bar{R}_n)^2\) may be small, so we will need to condition on an high probability event. For \(V \sim \tilde{\pi}\), define the event 
\begin{align*}
\mathcal{E} &= \left\{ \frac{1}{n} \sum_{i=1}^{n} \left(V_i - \bar{V}_n\right)^2 > \frac{\rho^2}{2}\right\}. 
\end{align*}
Since \(E\left(\frac{1}{n} \sum_{i=1}^{n} (R_i - \bar{R}_n)^2\right) = \frac{n-1}{n}\) and \(\Var\left(\frac{1}{n} \sum_{i=1}^{n} \left(R_i - \bar{R}_n\right)^2\right) \lesssim \frac{1}{n}\), it follows that for any $t>0$, 
\begin{equation*}
P\left\{\frac{1}{n} \sum_{i=1}^{n} (V_i - \bar{V}_n)^2 \leq \rho^2 - t\right\} \lesssim \frac{\rho^4}{n\left(t - \frac{1}{n}\rho^2\right)^2}  
\end{equation*}
by Chebyshev's inequality. Therefore, \(\tilde{\pi}(\mathcal{E}) \geq 1 - \frac{\eta}{2}\) where we have used \(n\) is sufficiently large by assumption.

We are now in position to define \(\pi\). Define \(\pi\) to be the probability measure on \(\R^n\) such that for any event \(A\),
\begin{equation*}
\pi(A) = \frac{\tilde{\pi}(A \cap \mathcal{E})}{\tilde{\pi}(\mathcal{E})}.
\end{equation*}
Since \(\rho = \sqrt{2}cn^{-1/4}\), it follows \(\pi\) is supported on \(\Sigma_1(cn^{-1/4})\). With \(\pi\) defined, consider the testing problem 
\begin{align*}
H_0 &: f \equiv 0, V = (1+\tau)\mathbf{1}_n, \\
H_1 &: f \equiv 0, V \sim \pi. 
\end{align*}
Here, \(\mathbf{1}_n \in \R^n\) denotes the vector with all entries equal to one. It is clear that the minimax testing risk is bounded from below by the optimal testing risk for the above Bayes testing problem. In particular, we have 
\begin{align*}
\inf_{\varphi}\left\{ \sup_{\substack{f \in \mathcal{H}_\alpha, \\ V \in \Sigma_0}} P_{f, V}\left\{\varphi = 1\right\} + \sup_{\substack{f \in \mathcal{H}_\alpha, \\ V \in \Sigma_1(cn^{-1/4})}} P_{f, V}\left\{\varphi = 0\right\} \right\} \geq 1 - d_{TV}(P_{0, (1+\tau)\mathbf{1}_n}, P_{0, \pi})
\end{align*}
where \(P_{0, \pi} = \int P_{0, V} \pi(dV)\) is the mixture induced by \(\pi\). To obtain the desired result, we will aim to show \(d_{TV}(P_{0, (1+\tau)\mathbf{1}_n}, P_{0, \pi}) \leq \eta\). Consider by triangle inequality 
\begin{align*}
d_{TV}(P_{0, (1+\tau)\mathbf{1}_n}, P_{0, \pi}) &\leq d_{TV}(P_{0, (1+\tau)\mathbf{1}_n}, P_{0, \tilde{\pi}}) + d_{TV}(P_{0, \tilde{\pi}}, P_{0, \pi}) \\
&\leq d_{TV}(P_{0, (1+\tau)\mathbf{1}_n}, P_{0, \tilde{\pi}}) + \tilde{\pi}(\mathcal{E}^c) \\
&\leq d_{TV}(P_{0, (1+\tau)\mathbf{1}_n}, P_{0, \tilde{\pi}}) + \frac{\eta}{2}. 
\end{align*}
Thus, it remains to bound \(d_{TV}(P_{0, (1+\tau)\mathbf{1}_n}, P_{0, \tilde{\pi}})\), which corresponds to the optimal testing risk for the problem 
\begin{align*}
H_0 &: Y_1,...,Y_n \overset{iid}{\sim} N(0, 1+\tau), \\
H_1 &: Y_1,...,Y_n \overset{iid}{\sim} \frac{1}{2} N(0, 1+\tau-\rho) + \frac{1}{2}N(0, 1+\tau+\rho). 
\end{align*}
Note we can write \(N(0, 1+\tau) = \nu_0 * N(0, 1)\) and \(\frac{1}{2} N(0, 1+\tau-\rho) + \frac{1}{2}N(0, 1+\tau+\rho) = \nu_1 * N(0, 1)\) where \(\nu_0 = N(0, \tau)\) and \(\nu_1 = \frac{1}{2}N(0, \tau-\rho) + \frac{1}{2} N(0, \tau+\rho)\). Here, \(*\) denotes convolution. Let us pick \(\tau = 2\rho\). Then, observe the first three moments of \(\nu_0\) and \(\nu_1\) match. Further note \(\nu_0\) and \(\nu_1\) are \(C_1\sqrt{\rho}\)-subgaussian for some universal constant \(C_1 > 0\). Since \(C_1 \sqrt{\rho} < 1\) as \(n\) is sufficiently large by assumption, it follows by Proposition \ref{prop:chisquare_moment_matching} 
\begin{equation*}
\chi^2\left(\nu_0 * N(0, 1), \nu_1*N(0, 1)\right) \lesssim \rho^{4} = \frac{4c^4}{n}. 
\end{equation*}
Selecting \(c_\eta\) sufficiently small depending only on \(\eta\), it follows by Lemma \ref{lemma:chisquare_tensorization} that
\begin{align*}
d_{TV}(P_{0, (1+\tau)\mathbf{1}_n}, P_{0, \tilde{\pi}}) \leq \frac{1}{2}\sqrt{\chi^2\left(P_{0, (1+\tau)\mathbf{1}_n}, P_{0, \tilde{\pi}}\right)} \leq \frac{\eta}{2}. 
\end{align*}
The proof is complete.
\end{proof}

\section{Adaptation}\label{appendix:adaptation}
\begin{proof}[Proof of Theorem \ref{thm:alpha_adapt}]
The proof is a direct application of Proposition \ref{prop:nuisance_lbound}. Fix \(\eta \in (0, 1)\) and let \(0 < c < c_\eta\) where \(c_\eta\) is set as in the proof of Proposition \ref{prop:nuisance_lbound}. Let \(V_1 \in \R^n \) be the restriction to the design points of the variance function defined in the proof of Proposition \ref{prop:nuisance_lbound} with the choice \(\alpha = \alpha_1\) and \(\beta > 0\) arbitrarily small. Let \(\pi\) denote the prior on \(\mathcal{H}_{\alpha_1}\) defined in the proof of Proposition \ref{prop:nuisance_lbound}. Then observe 
\begin{align*}
&\inf_{\varphi}\left\{ \sup_{\substack{f \in \mathcal{H}_{\alpha_1}, \\ V \in \Sigma_0}} P_{f, V}\left\{\varphi = 1\right\} + \sup_{\substack{f \in \mathcal{H}_{\alpha_2}, \\ V \in \Sigma_1\left(cn^{-2\alpha_1}\right)}} P_{f,V}\left\{\varphi = 0\right\} \right\} \\
&\geq \inf_{\varphi}\left\{ P_{\pi, \mathbf{1}_n}\{\varphi = 1\} + P_{0, V_1}\left\{\varphi = 0\right\}\right\} \\
&= 1 - d_{TV}(P_{\pi, \mathbf{1}_n}, P_{0, V_1}) \\
&\geq 1 - \frac{1}{2}\sqrt{\chi^2(P_{\pi, \mathbf{1}_n}, P_{0, V_1})}
\end{align*}
where \(P_{\pi, \mathbf{1}_n} = \int P_{f, \mathbf{1}_n} \,\pi(df)\) is the mixture induced by \(\pi\). Observe that \(1 - \frac{1}{2}\sqrt{\chi^2(P_{\pi, \mathbf{1}_n}, P_{0, V_1})}\) is analogous to (\ref{eqn:chisquare_lowerbound}), and so the proof of Proposition \ref{prop:nuisance_lbound} can be essentially repeated to obtain \(1 - \frac{1}{2}\sqrt{\chi^2(P_{\pi, \mathbf{1}_n}, P_{0, V_1})} \geq 1-\eta\) as desired. 
\end{proof}

\subsection{Numerical simulation}\label{appendix:adaptation_simulation}
To illustrate the impossibility of adaptation asserted by Theorem \ref{thm:alpha_adapt}, we conducted a numerical simulation. Though a simulation cannot definitely demonstrate the impossibility of a statistical task, a numerical demonstration can aid in gaining intuition. In particular, we will illustrate that the rate-optimal testing procedure proposed in Section 3 fails to adapt in the testing problem (\ref{problem:alpha_adapt0})-(\ref{problem:alpha_adapt1}). Fix \(\alpha_1, \alpha_2 \in (0, \frac{1}{4})\). Specifically, we will demonstrate that when \(\alpha_1 < \alpha_2\), the test has low power when the separation in the alternative hypothesis is of order \(n^{-2\alpha_1}\), as asserted in Theorem \ref{thm:alpha_adapt}. On the other hand, when \(\alpha_1 \geq \alpha_2\), we expect that the test can have nontrivial power when the separation is of the desired order \(\varepsilon^*(\alpha_2)\) given by (\ref{rate:betazero}) corresponding to the smoothness \(\alpha_2\) of the alternative hypothesis. 

To elaborate on why this behavior is to be expected when \(\alpha_1 \geq \alpha_2\), recall the test statistic \(\hat{S}\) given by (\ref{def:Shat}) and its corresponding test \(\varphi_{\alpha_1} = \mathbbm{1}_{\{\hat{S} > C(n^{-4\alpha_1} + n^{-1/2})\}}\). By choosing \(C\) appropriately, the Type I error of \(\varphi_{\alpha_1}\) is controlled via arguments of Section 3 since the smoothness in \(H_0\) is \(\alpha_1\). Examining the Type II error, it is clear \(n^{-4\alpha_1} \leq n^{-4\alpha_2}\) since \(\alpha_1 \geq \alpha_2\). Therefore, it follows \(\varphi_{\alpha_1} \geq \varphi_{\alpha_2}\) where \(\varphi_{\alpha_2} = \mathbbm{1}_{\{\hat{S} > C(n^{-4\alpha_2} + n^{-1/2})\}}\). Since the arguments of Section 3 control the Type II error of \(\varphi_{\alpha_2}\) when the separation is of order \(\varepsilon^*(\alpha_2)\) (because the smoothness in \(H_1\) is \(\alpha_2\)), it follows directly from \(\varphi_{\alpha_1} \geq \varphi_{\alpha_2}\) that the Type II error of \(\varphi_{\alpha_1}\) is also controlled. Therefore, \(\varphi_{\alpha_1}\) can successfully detect when \(\alpha_1 \geq \alpha_2\).

To demonstrate the two behaviors of the test in the two regimes, we conduct the following simulation. The testing problem (31)-(32) is a composite-vs-composite problem, so for simulation purposes we actually consider a simple-vs-simple Bayes testing subproblem which is close to that used in the lower bound argument of Theorem \ref{thm:alpha_adapt}. For our simulation, we consider the sample size \(n = 2000\), and we examine the testing problem
\begin{align*}
H_0 &: f \sim \pi_{\alpha_1} \text{ and } V_i = 1,\\
H_1 &: f \equiv 0 \text{ and } V_i = V_{\alpha_1 \wedge \alpha_2}(i/n),
\end{align*}
where \(V_\alpha(x) = \mathbbm{1}_{\{x < 1/2\}} + (1+10n^{-2\alpha})\mathbbm{1}_{\{x > 1/2\}}\). Here, a draw \(f \sim \pi_\alpha\) is given by \(f(x) = \sum_{i=1}^{n} R_i (\sqrt{V_\alpha(x) - 1}) g(x-i/n)\) with \(R_1,...,R_n \overset{iid}{\sim} N(0, 1)\) and \(g(x) = (1-2n|x|)\mathbbm{1}_{\{|x| \leq \frac{1}{2n}\}}\). Note the separation is of order \(n^{-2(\alpha_1 \wedge \alpha_2)}\) in the alternative hypothesis, since we wish to illustrate low power at separation \(n^{-2\alpha_1}\) when \(\alpha_1 < \alpha_2\), and nontrivial power at separation \(n^{-2\alpha_2}\) when \(\alpha_1 > \alpha_2\). Further note this testing problem does not exactly match the problem in the lower bound argument of Theorem \ref{thm:alpha_adapt}, but the differences are minor and our simulation is set up to illustrate the essence of the impossibility phenomenon. Additionally, we only consider separations of the form \(n^{-2\alpha}\) rather than \(n^{-2\alpha} + n^{-1/4}\) in order to stay faithful to the lower bound construction of Theorem \ref{thm:alpha_adapt}. Note that the term \(n^{-1/4}\) appearing in the minimax rate \(\varepsilon^*(\alpha_2)\) is proved via a different lower bound construction and thus is not relevant in this simulation.

The simulation was conducted as follows. We considered a collection \(\mathcal{A}\) of \(20\) different values of \(\alpha\) spread evenly in the interval \([0, \frac{1}{4}]\). For each \(\alpha_1 \in \mathcal{A}\), we computed the cutoff \(c_{\alpha_1}\) via \(100\) simulations of the null hypothesis associated to smoothness \(\alpha_1\) so that the test \(\varphi_{\alpha_1} = \mathbbm{1}_{\{\hat{S} > c_{\alpha_1}\}}\) (approximately) controls the Type I error at level \(0.05\). Turning to the power, for each \(\alpha \in \mathcal{A}\), we simulated \(100\) datasets from the alternative hypothesis associated to smoothness \(\alpha\). Then for each \(\alpha_1, \alpha_2 \in \mathcal{A}\), we applied the test \(\varphi_{\alpha_1}\) to each of the \(100\) datasets associated to the alternative hypothesis corresponding to smoothness \(\alpha_1 \wedge \alpha_2\). The power of \(\varphi_{\alpha_1}\) was estimated by averaging the result of \(\varphi_{\alpha_1}\) across those \(100\) simulated datasets. 

\begin{figure}
\centering
\includegraphics{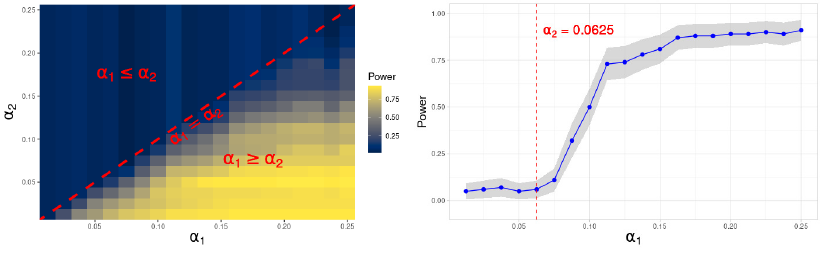}
\caption{(Left) Heatmap of the estimated power of \(\varphi_{\alpha_1}\) across \((\alpha_1, \alpha_2)\). The power is low in the regime \(\alpha_1 < \alpha_2\) where the separation in the alternative is of order \(n^{-2\alpha_1}\), in accordance with Theorem \ref{thm:alpha_adapt}. The power increases in the regime \(\alpha_1 \geq \alpha_2\) where the separation is of order \(n^{-2\alpha_2}\). (Right) Cross-section across \(\alpha_1\) at \(\alpha_2 = 0.0625\). Error bars are pointwise 95\% Wald confidence intervals.}\label{fig:adapt_sim}
\end{figure}

The left panel of Figure \ref{fig:adapt_sim} shows a heatmap of the estimated power of \(\varphi_{\alpha_1}\) and the right panel of Figure \ref{fig:adapt_sim} shows the power at the cross-section of \(\alpha_2 = 0.0625\), with the error bars signifying a pointwise \(95\%\) Wald confidence interval. These figures agree with the expectations outlined earlier. Theorem \ref{thm:alpha_adapt} asserts that when \(\alpha_1 < \alpha_2\), it is impossible to detect heteroskedasticity having separation \(n^{-2\alpha_1}\); indeed, the left panel of Figure \ref{fig:adapt_sim} shows the rate-optimal test \(\varphi_{\alpha_1}\) achieves very low power when \(\alpha_1 < \alpha_2\). Likewise, in the right panel, the test has very low power when \(\alpha_1 < 0.0625\). On the other hand, we expect detection to be possible when \(\alpha_1 \geq \alpha_2\) as explained earlier. Indeed, the power increases in the region \(\alpha_1 \geq \alpha_2\) as seen in the left panel of Figure \ref{fig:adapt_sim}. Examining the right panel, the power increases quite a bit right when \(\alpha_1\) crosses the threshold of \(0.0625\). Hence, the numerical simulation agrees with our expectations and thus illustrates the impossibility of adaptation.
\section{Heteroskedasticity across the design}\label{appendix:heteroskedasticity_across}
\subsection{Known noise distribution}
\begin{proof}[Proof of Theorem \ref{theorem: optimal test discrete known noise}]
When $\beta<\frac{1}{4}$, the result directly follows from Theorem \ref{thm:beta_zero_upper}. When $\beta \geq \frac{1}{4}$, we can use the same logic as in the proof of Theorem \ref{theorem: MSE of T_hat} to derive for any $\frac{1}{4} < \beta < \frac{1}{2}$,
\begin{align*}
 E_{f,V}\left(\left|\hat{T} - T\right|^2\right) 
 &\lesssim n^{-8(\alpha \wedge 1)} + n^{-\frac{8\beta}{4\beta+1}} + n^{-4\beta} + \frac{||W - \bar{W}_n\mathbf{1}_n||_2^2}{n^2}\nonumber\\
 &\lesssim n^{-8(\alpha \wedge 1)} + n^{-\frac{8\beta}{4\beta+1}} + n^{-4\beta} + \frac{\sum_{i=0}^n\left(V\left(\frac{i}{n}\right) - \bar{V}_n\right)^2  }{n^2}.
\end{align*}
Now following the same logic as the proof of Theorem \ref{theorem: optimal test beta>1/4}, we can get the required result.
\end{proof}

\begin{proof}[Proof of Proposition \ref{prop:discrete_known_noise_sqrtn}]
The proof is largely the same as the proof of Theorem \ref{thm:beta_zero_lower} with some slight modifications. Fix \(\eta \in (0, 1)\) and set \(0 < c < c_\eta\) where we take \(c_\eta\) sufficiently small like in the proof of Theorem \ref{thm:beta_zero_lower}. Consequently, we will only point out the changes and omit details for brevity. We will define a prior \(\tilde{\pi}\) which is similar to the \(\tilde{\pi}\) appearing in the proof of Theorem \ref{thm:beta_zero_lower}. A draw \(V \sim \tilde{\pi}\) is obtained by drawing \(R_0,...,R_n \overset{iid}{\sim} \Rademacher\left(\frac{1}{2}\right)\) and setting, for \(0 \leq x \leq 1\), 
\begin{equation*}
V(x) = 1 + \sum_{i=0}^{n} R_i \cdot cn^{-1/4} g\left(x - \frac{i}{n}\right)
\end{equation*}
where \(g(x) = \left(1 - 2n|x|\right)\mathbbm{1}_{\{|x| \leq \frac{1}{2n}\}}\). Note \(V \in \mathcal{H}_\beta\) for \(\beta \leq \frac{1}{4}\) and \(V \geq 0\) for \(n\) large enough. Notice that \(V(x_i) \overset{iid}{\sim} \frac{1}{2}\delta_{1 + cn^{-1/4}} + \frac{1}{2}\delta_{1 - cn^{-1/4}}\), which is precisely the same distribution as \(V_i \overset{iid}{\sim} \tilde{\pi}\) in the proof of Theorem \ref{thm:beta_zero_lower}. Consequently, one can define a prior \(\pi\) from \(\tilde{\pi}\) by conditioning on a high probability event and follow the argument of the proof of Theorem \ref{thm:beta_zero_lower} to obtain the desired result. The proof details are omitted. 
\end{proof}

\subsection{Unknown noise distribution}\label{section:discrete_unknown_noise_proofs}

\begin{proof}[Proof of Theorem \ref{theorem: optimal test discrete unknown noise}]
The proof is essentially the proof of Theorem \ref{theorem: optimal test discrete known noise}. We omit the details.
\end{proof}

\begin{proof}[Proof of Proposition \ref{prop:discrete_unknown_noise_2beta}]
The architecture of the proof is the same as the proofs of Proposition \ref{prop:fourth_moment_info} and Theorem \ref{thm:beta_zero_lower}. Only small details in the prior construction are different. Fix \(\eta \in (0, 1)\) and let \(0 < c < c_\eta\) where we take \(c_\eta = 1\). To define a prior \(\pi\) on \(\mathcal{D}_{1,\beta}(cn^{-\beta})\), a preliminary probability distribution \(\tilde{\pi}\) on \(\mathcal{H}_\beta\) will be constructed first. Define the function \(g : \R \to \R\) with \(g(x) = (1 - 2n|x|)\mathbbm{1}_{\{|x| \leq \frac{1}{2n}\}}\). The distribution \(\tilde{\pi}\) is defined as follows. A draw \(V \sim \tilde{\pi}\) is obtained by first drawing \(R_0,...,R_n \overset{iid}{\sim} \text{Rademacher}\left(\frac{1}{2}\right)\) and, for \(0 \leq x \leq 1\), setting 
\begin{equation*}
V(x) = 1 + \sum_{i=0}^{n} R_i \cdot \sqrt{2}cn^{-\beta} g\left(x - \frac{i}{n}\right).
\end{equation*}
It is straightforward to check \(V \in \mathcal{H}_\beta\) and \(V \geq 0\). Since the design points are \(x_i = \frac{i}{n}\) for \(0 \leq i \leq n\), we have \(V(x_i) = 1 + \sqrt{2} R_i cn^{-\beta}\) and \(\bar{V}_n = \frac{1}{n+1}\sum_{i=0}^{n} V(x_i) = 1 + \sqrt{2}\bar{R}_n cn^{-\beta}\) where \(\bar{R}_n = \frac{1}{n+1}\sum_{i=0}^{n} R_i\). Therefore, \(\frac{1}{n+1}\sum_{i=0}^{n} \left(V(x_i) - \bar{V}_n\right)^2 = 2c^2n^{-2\beta} \cdot \frac{1}{n+1}\sum_{i=0}^{n}\left(R_i - \bar{R}_n\right)^2\). As discussed in the proof of Theorem \ref{thm:beta_zero_lower}, \(\tilde{\pi}\) need not be supported on \(\mathcal{D}_{1,\beta}(cn^{-\beta})\) since \(\frac{1}{n+1}\sum_{i=0}^{n}\left(R_i - \bar{R}_n\right)^2\) may be quite small. However, it is small only a small probability event. Therefore, we will condition on a high probability event to furnish a prior \(\pi\) which is supported on \(\mathcal{D}_{1,\beta}(cn^{-\beta})\). Define the event 
\begin{equation*}
\mathcal{E} = \left\{\frac{1}{n} \sum_{i=1}^{n} \left(V_i - \bar{V}_n\right)^2 > c^2n^{-2\beta}\right\}.
\end{equation*}
By the same argument employed in the proof of Theorem \ref{thm:beta_zero_lower}, it follows \(\tilde{\pi}(\mathcal{E}) \geq 1 - \eta\) since \(n\) is sufficiently large depending only on \(\eta\). Define the prior \(\pi\) to be the distribution such that for any event \(A\), 
\begin{equation*}
\pi(A) = \frac{\tilde{\pi}(A \cap \mathcal{E})}{\tilde{\pi}(\mathcal{E})}.
\end{equation*}
Note \(\pi\) is supported on \(\mathcal{D}_{1,\beta}(cn^{-\beta})\). With \(\pi\) in hand, consider the Bayes testing problem 
\begin{align*}
H_0 &: f \equiv 0, V = \mathbf{1}, \text{ and } \xi_i \overset{iid}{\sim} Q_\xi , \\
H_1 &: f \equiv 0, V \sim \pi, \text{ and } \xi_i \overset{iid}{\sim} N(0, 1),
\end{align*}
where \(Q_\xi = \frac{1}{2}N\left(0, 1 + cn^{-\beta}\right) + \frac{1}{2}N\left(0, 1 - cn^{-\beta}\right)\). It is clear the optimal testing risk for this Bayes testing problem is a lower bound for the minimax testing risk. Specifically, we have 
\begin{align*}
&\inf_{\varphi}\left\{ \sup_{\substack{f \in \mathcal{H}_\alpha, \\ V \in \mathcal{D}_0, \\ P_\xi \in \Xi}} P_{f, V, P_\xi}\left\{\varphi = 1\right\} + \sup_{\substack{f \in \mathcal{H}_\alpha, \\ V \in \mathcal{D}_{1,\beta}(cn^{-\beta}), \\ P_\xi \in \Xi}} P_{f,V,P_\xi}\left\{\varphi = 0\right\} \right\} \\
&\geq 1 - d_{TV}\left(P_{0,\mathbf{1},Q_\xi}, P_{0, \pi, N(0, 1)}\right) \\
&\geq 1 - d_{TV}\left(P_{0, \pi, N(0, 1)}, P_{0, \tilde{\pi}, N(0, 1)}\right) - d_{TV}\left(P_{0,\mathbf{1},Q_\xi}, P_{0, \tilde{\pi}, N(0, 1)}\right)
\end{align*}
where \(P_{0, \pi, N(0, 1)} = \int P_{0, V, N(0, 1)} \pi(dV)\) is the mixture induced by \(\pi\). By definition of \(\pi\), it follows that \(d_{TV}\left(P_{0, \pi, N(0, 1)}, P_{0, \tilde{\pi}, N(0, 1)}\right) \leq \tilde{\pi}(\mathcal{E}^c) \leq \eta\). To bound \(d_{TV}\left(P_{0,\mathbf{1},Q_\xi}, P_{0, \tilde{\pi}, N(0, 1)}\right)\), consider by Neyman-Pearson lemma it is the optimal risk for the hypothesis testing problem 
\begin{align*}
H_0 &: f \equiv 0, V = \mathbf{1}, \text{ and } \xi_i \overset{iid}{\sim} Q_\xi, \\
H_1 &: f \equiv 0, V \sim \tilde{\pi}, \text{ and } \xi_i \overset{iid}{\sim} N(0, 1).
\end{align*}
By definition of \(Q_\xi\), it follows that under the null hypothesis we have \(Y_0,...,Y_n \overset{iid}{\sim} \frac{1}{2}N(0, 1 + cn^{-\beta}) + \frac{1}{2}N(0, 1-cn^{-\beta})\). Under the alternative, by definition of \(\tilde{\pi}\) we have \(V(x_i) \overset{iid}{\sim} \frac{1}{2}\delta_{1 + cn^{-\beta}} + \frac{1}{2}\delta_{1 - cn^{-\beta}}\). Since \(Y_i = V^{1/2}(x_i)\xi_i\), it follows \(Y_0,...,Y_n \overset{iid}{\sim} \frac{1}{2}N(0, 1 + cn^{-\beta}) + \frac{1}{2}N(0, 1-cn^{-\beta})\) under the alternative. Therefore, the data share the same distribution under \(H_0\) and \(H_1\), and so the hypotheses are indistinguishable. Hence, \(d_{TV}\left(P_{0,\mathbf{1},Q_\xi}, P_{0, \tilde{\pi}, N(0, 1)}\right) = 0\) and the proof is complete.
\end{proof}

\section{Auxiliary results}

\begin{lemma}[\(\chi^2\) tensorization \cite{tsybakov_introduction_2009}]\label{lemma:chisquare_tensorization}
We have 
\begin{equation*}
 \chi^2\left(\bigotimes_i P_i, \bigotimes_i Q_i\right) = \left(\prod_{i=1}^{n} (1 + \chi^2(P_i, Q_i))\right) - 1.
\end{equation*}
\end{lemma}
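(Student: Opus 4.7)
The plan is to start from the elementary identity
\begin{equation*}
1 + \chi^2(Q, P) = \int \left(\frac{dQ}{dP}\right)^2 dP,
\end{equation*}
which follows by expanding the square in the paper's definition \(\chi^2(Q, P) = \int (dQ/dP - 1)^2 \, dP\) and using \(\int (dQ/dP) \, dP = 1\). First I would record the standard measure-theoretic fact that if \(Q_i \ll P_i\) for every \(i\), then \(\bigotimes_i Q_i \ll \bigotimes_i P_i\) and the Radon-Nikodym derivative factorizes as
\begin{equation*}
\frac{d\bigotimes_i Q_i}{d\bigotimes_i P_i}(x_1,\dots,x_n) = \prod_{i=1}^{n} \frac{dQ_i}{dP_i}(x_i);
\end{equation*}
if some \(Q_i \not\ll P_i\), then both sides of the claimed identity are \(+\infty\) and the equation is vacuous, so nothing further is needed in that case.

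Next I would apply the display above to the product measures and substitute the factored derivative, obtaining
\begin{equation*}
1 + \chi^2\!\left(\bigotimes_i Q_i,\, \bigotimes_i P_i\right) = \int \prod_{i=1}^{n} \left(\frac{dQ_i}{dP_i}(x_i)\right)^{\!2} \, d(P_1 \otimes \cdots \otimes P_n).
\end{equation*}
Because the integrand is nonnegative and splits as a product over the coordinates, Fubini-Tonelli converts this into \(\prod_{i=1}^{n} \int (dQ_i/dP_i)^2 \, dP_i\). Applying the same identity from the first display in each factor then rewrites the \(i\)-th integral as \(1 + \chi^2(Q_i, P_i)\). Subtracting one from both sides yields the lemma.

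The argument is entirely routine and there is no genuine obstacle; the only mildly delicate bookkeeping is the absolute continuity dichotomy, which is handled by the vacuous case observed above. This is why the result is simply cited from \cite{tsybakov_introduction_2009} rather than proved in the main text.
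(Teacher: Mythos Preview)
Your argument is correct and is the standard derivation of this identity; note only that you have swapped the roles of \(P_i\) and \(Q_i\) relative to the lemma as stated, which is harmless since the identity is symmetric under relabeling. The paper does not supply its own proof of this lemma---it is simply cited from \cite{tsybakov_introduction_2009}---so there is nothing further to compare.
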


\begin{lemma}[Lemma 1 \cite{wang_effect_2008}]\label{lemma:moment_matching}
For any fixed positive integer \(q\), there exists \(B < \infty\) and a symmetric distribution \(G\) on \([-B, B]\) such that \(G\) and the standard normal distribution have the same first \(q\) moments, that is, 
\begin{equation*}
\int_{-B}^{B} x^j G(dx) = \int_{-\infty}^{\infty} x^j \varphi(x) \,dx 
\end{equation*}
for \(j=1,2,...,q\), where \(\varphi\) denotes the density of the standard normal distribution. 
\end{lemma}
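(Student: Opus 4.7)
The plan is to construct $G$ explicitly as a finitely supported atomic measure via Gauss--Hermite quadrature. For the given $q$, set $m = \lceil (q+1)/2 \rceil$, and let $x_1 < \dots < x_m$ denote the roots of the degree-$m$ (probabilists') Hermite polynomial $H_m$, with associated Gauss--Hermite weights $w_1,\dots,w_m > 0$ normalized so $\sum_i w_i = 1$. The classical exactness theorem for Gaussian quadrature asserts
\[
    \sum_{i=1}^m w_i p(x_i) = \int_{-\infty}^{\infty} p(x)\varphi(x)\,dx
\]
for every polynomial $p$ of degree at most $2m-1$. Since $2m - 1 \geq q$, taking $p(x) = x^j$ for $j = 1,\dots,q$ shows that the discrete probability measure $G := \sum_{i=1}^m w_i \delta_{x_i}$ matches all moments of the standard normal up to order $q$.

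Next I would check the remaining two requirements. The Hermite polynomial satisfies the parity relation $H_m(-x) = (-1)^m H_m(x)$, so its root set is symmetric about $0$: whenever $x_i$ is a root, so is $-x_i$, with matching Christoffel--Darboux weights $w_i = w_{m+1-i}$. Hence $G$ is symmetric about the origin. The support $\{x_1,\dots,x_m\}$ is a finite set, so $B := \max_i |x_i| < \infty$ and $G$ is supported on $[-B,B]$, as required.

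There is no substantive obstacle here; the lemma is a standard consequence of Gaussian quadrature theory and the parity of Hermite polynomials. If one wished to avoid orthogonal polynomial machinery, an equivalent route is to invoke the truncated Hamburger moment problem: the moment sequence $(1,0,1,0,3,0,15,\dots)$ of the standard normal is strictly positive definite, and therefore admits a compactly supported representing measure matching the first $q$ moments; symmetrizing that solution by averaging with its reflection preserves the even moments while forcing the odd moments to vanish (which is also the target since normal odd moments are zero), yielding the desired $G$ on a bounded interval.
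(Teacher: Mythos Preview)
Your proof is correct. The paper does not actually prove this lemma; it merely cites it as Lemma~1 of \cite{wang_effect_2008} and states it without argument in the auxiliary results appendix. Your Gauss--Hermite quadrature construction is the standard way to establish the result and supplies exactly what the paper omits: taking $m=\lceil (q+1)/2\rceil$ nodes gives exactness for polynomials of degree $\leq 2m-1\geq q$, the parity $H_m(-x)=(-1)^m H_m(x)$ yields symmetry of the node-weight pairs, and the finite support immediately gives $B<\infty$. Nothing further is needed.
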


\begin{proposition}[Theorem 3.3.3 \cite{wu_polynomial_2020}]\label{prop:chisquare_moment_matching}
Suppose \(\nu\) and \(\nu'\) are two probability distributions sharing the first \(L\) moments. If \(\nu, \nu'\) are \(\varepsilon\)-subgaussian for \(\varepsilon < 1\), then 
\begin{equation*}
\chi^2(\nu * N(0, 1), \nu' * N(0, 1)) \leq \frac{16}{\sqrt{L}} \frac{\varepsilon^{2L+2}}{1-\varepsilon^2}. 
\end{equation*}
\end{proposition}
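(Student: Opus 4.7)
The plan is to diagonalize convolution with $N(0,1)$ using the Hermite basis, which turns $\chi^2$ into a moment-difference series whose low-order terms vanish by the shared-moments hypothesis. Write $p = \nu * \phi$ and $q = \nu' * \phi$, with $\phi$ the standard Gaussian density. I would start from the generating-function identity $\phi(y-x)/\phi(y) = e^{xy - x^2/2} = \sum_{k\geq 0} H_k(y)\, x^k/k!$, where $H_k$ are the probabilists' Hermite polynomials satisfying $\int H_j H_k \phi = k!\,\delta_{jk}$. Absolute convergence of the series (guaranteed by $\varepsilon$-subgaussianity with $\varepsilon < 1$, which keeps all relevant exponential moments of $\nu,\nu'$ finite) permits integrating term-by-term against $\nu$ and $\nu'$, and subtraction cancels the $k \leq L$ terms, giving
\begin{equation*}
\frac{p(y) - q(y)}{\phi(y)} \;=\; \sum_{k\geq L+1} \frac{m_k(\nu) - m_k(\nu')}{k!}\, H_k(y),
\end{equation*}
where $m_k(\cdot)$ is the $k$th moment. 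Hermite orthogonality then yields $\int (p-q)^2/\phi\, dy = \sum_{k\geq L+1} (m_k(\nu) - m_k(\nu'))^2/k!$.

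The second step is to pass from the Gaussian reference $\phi$ to the actual reference $q$ in the denominator of the $\chi^2$ integrand. Since $\nu'$ is $\varepsilon$-subgaussian with $\varepsilon < 1$, $q$ behaves like a mildly stretched Gaussian, and I would prove a comparison of the form $q(y) \geq c(\varepsilon)\,\phi(y)$ (at least on a bulk set carrying most of the relevant mass) with $c(\varepsilon)$ explicit in $\varepsilon$, for instance by bounding $q(y) = E[\phi(y-X')]$ from below using the MGF of $\nu'$. Combined with Step 1, this gives
\begin{equation*}
\chi^2(P,Q) \;\leq\; c(\varepsilon)^{-1} \sum_{k\geq L+1} \frac{(m_k(\nu) - m_k(\nu'))^2}{k!}.
\end{equation*}
Standard subgaussian moment bounds (even moments via comparison with $N(0,\varepsilon^2)$, odd moments by Cauchy--Schwarz) give $(m_k(\nu) - m_k(\nu'))^2/k! \lesssim \varepsilon^{2k}/\sqrt{k}$ after Stirling, and the geometric-in-$\varepsilon^2$ sum collapses to $\sum_{k\geq L+1} \varepsilon^{2k}/\sqrt{k} \leq \varepsilon^{2L+2}/(\sqrt{L}(1-\varepsilon^2))$, matching the target bound up to constants.

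The main obstacle is the reference-measure switch in Step 2: the comparison $q \geq c\phi$ need not hold pointwise in full generality (if $\nu'$ is concentrated far from the origin, $q$'s tails differ markedly from $\phi$'s), so the argument must be executed with care. Two routes look viable: a bulk/tail split where the tail contribution is absorbed into the rapid decay of the Hermite coefficients; or a more intrinsic approach working in an orthonormal basis adapted to $Q$ (a tilted-Hermite family), perturbing around $\varepsilon=0$ via a Neumann series in $\varepsilon^2$, which would naturally produce the $1/(1-\varepsilon^2)$ factor as the resolvent of the perturbation. Either way, the combinatorial bookkeeping in the moment estimates and the Stirling application are routine once the reference-measure comparison is controlled.
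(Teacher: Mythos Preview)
The paper does not supply its own proof; the proposition is quoted from Wu's lecture notes (Theorem 3.3.3 in \cite{wu_polynomial_2020}) as an auxiliary result. Your Hermite-expansion strategy is the standard one and is sound, but you have overstated the difficulty of Step 2. The pointwise comparison $q \geq c(\varepsilon)\phi$ actually holds globally with $c(\varepsilon) = \sqrt{1-\varepsilon^2}$: writing $q(y)/\phi(y) = E\bigl[e^{yX' - X'^2/2}\bigr]$ for $X' \sim \nu'$, apply Cauchy--Schwarz with $A = e^{yX'/2 - X'^2/4}$ and $B = e^{X'^2/4}$ to get $E[A^2] \geq (E[AB])^2/E[B^2]$, i.e.\ $E\bigl[e^{yX' - X'^2/2}\bigr] \geq (E[e^{yX'/2}])^2 / E[e^{X'^2/2}]$. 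The numerator is at least $1$ by Jensen since $E[X'] = 0$, and the denominator is at most $(1-\varepsilon^2)^{-1/2}$ by the standard bound $E[e^{\lambda X'^2}] \leq (1 - 2\lambda\varepsilon^2)^{-1/2}$ at $\lambda = 1/2$ (valid precisely because $\varepsilon < 1$). So neither a bulk/tail split nor a tilted Hermite basis is needed; the reference-measure switch is a one-line consequence of subgaussianity.

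One point that does require care is your claimed estimate $(m_k(\nu) - m_k(\nu'))^2/k! \lesssim \varepsilon^{2k}/\sqrt{k}$. The routine tail-integration bound for an $\varepsilon$-subgaussian variable gives only $E[|X|^k]^2/k! \lesssim \sqrt{k}\,\varepsilon^{2k}$, which after summation would put $\sqrt{L}$ rather than $1/\sqrt{L}$ in the final constant. Obtaining the stated $16/\sqrt{L}$ requires the sharper Gaussian-moment comparison you allude to, and that comparison is not an automatic consequence of the MGF inequality $E[e^{tX}] \leq e^{\varepsilon^2 t^2/2}$; it needs its own argument. For the paper's application this discrepancy is immaterial, since $L$ is a fixed integer and only the decay $\varepsilon^{2L+2}$ is actually used.
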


\begin{definition}\label{def:subgaussian}
A random variable \(X\) is \(\sigma\)-subguassian if \(E(e^{tX}) \leq e^{\frac{\sigma^2t^2}{2}}\) for all \(t \in \R\). 
\end{definition}

\begin{lemma}\label{lemma:bounded_subgaussian}
If \(X\) is a real-valued random variable such that \(a \leq X \leq b\) with probability one, then \(X\) is \(\frac{b-a}{2}\)-subgaussian. 
\end{lemma}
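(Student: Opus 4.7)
The plan is to give a standard Hoeffding-style argument by bounding the second derivative of the cumulant generating function uniformly. Before starting, I would note that the paper's subgaussianity definition $E(e^{tX}) \leq e^{\sigma^2 t^2/2}$ cannot hold unless $E(X) = 0$ (for if $E(X) \neq 0$, the linear term in the Taylor expansion of $E(e^{tX})$ about $t=0$ would violate the bound for small $t$ of appropriate sign). Hence the lemma should be read as applying to a centered $X$, which is the setting in every invocation that appears in the body of the paper; if $X$ is not centered, one centers first, noting that $a \le X \le b$ implies the centered version is supported in an interval of width $b-a$. So I may assume $E(X) = 0$.

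With $X$ centered, define the cumulant generating function $\psi(t) = \log E(e^{tX})$, so that $\psi(0) = 0$ and $\psi'(0) = E(X) = 0$. For each $t \in \R$, introduce the exponentially tilted law $dP_t/dP = e^{tX}/E(e^{tX})$, and let $X_t$ denote a random variable with distribution $P_t$. A routine differentiation gives
\[
\psi''(t) \;=\; \Var_{P_t}(X_t).
\]
Because $P_t$ is absolutely continuous with respect to the law of $X$, the tilted variable $X_t$ still takes values in $[a,b]$ almost surely, so Popoviciu's inequality yields $\Var_{P_t}(X_t) \leq (b-a)^2/4$. Thus $\psi''(t) \leq (b-a)^2/4$ uniformly in $t$.

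Integrating $\psi''(s) \leq (b-a)^2/4$ twice from $0$ to $t$, and using $\psi(0) = \psi'(0) = 0$, yields
\[
\psi(t) \;\leq\; \frac{t^2 (b-a)^2}{8} \;=\; \frac{1}{2}\left(\frac{b-a}{2}\right)^2 t^2,
\]
and exponentiating produces the desired bound $E(e^{tX}) \leq e^{\sigma^2 t^2/2}$ with $\sigma = (b-a)/2$.

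There is no real obstacle in this argument: the only nontrivial ingredient is Popoviciu's inequality $\Var(Y) \leq (b-a)^2/4$ for $Y$ supported on $[a,b]$, which is itself a one-line consequence of $\Var(Y) = \min_c E((Y-c)^2) \leq E\bigl(((b-a)/2)^2\bigr)$ taken at $c = (a+b)/2$. The rest is standard calculus together with the cumulant/tilting identity $\psi'' = \Var$ under the tilted measure.
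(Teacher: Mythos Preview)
The paper does not supply a proof of this lemma; it is listed among the auxiliary results without argument, presumably as a standard fact (Hoeffding's lemma). Your proof via the tilted-measure identity $\psi''(t)=\Var_{P_t}(X)$ together with Popoviciu's inequality is correct and is exactly the classical proof of Hoeffding's lemma.

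Your opening remark is well taken and worth keeping: with the paper's Definition~\ref{def:subgaussian}, namely $E(e^{tX})\le e^{\sigma^2 t^2/2}$ for all $t$, the lemma as literally stated is false unless $E(X)=0$ (e.g.\ $X\equiv 1$ with $a=b=1$ gives $(b-a)/2=0$ but $E(e^{tX})=e^t$). The paper only invokes the lemma for symmetric, hence centered, distributions (the law of $R\sqrt{V_1(i/n)-1}$ with $R\sim G$ symmetric), so your reduction to the centered case is both correct and consistent with how the result is used.
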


\begin{lemma}\label{lemma:zygmund_holder_interval_discrete}
For \(h \in \Z\), define the finite difference operator \(D_h\) which sends a function \(g : \Z \to \R\) to the function \(D_h g : \Z \to \R\) with \(D_h g(z) = g(z + h) - g(z)\). Suppose for some \(\alpha < 1\)
\begin{equation*}
||g||_{*} := \sup_{\substack{z \in \Z, \\h\in E_n\setminus \{0\}}} \frac{|D_h^2g(z)|}{|h/n|^{\alpha}} < \infty,
\end{equation*}
where \(E_n = [-n, n] \cap \Z\). Then for all \(z \in E_n\) we have
\begin{equation*}
|g(z) - g(0)| \leq |z/n|^\alpha\left(||g||_* \frac{C(\alpha)}{2} + 2||g||_\infty\right)
\end{equation*}
where \(C(\alpha) > 0\) is a constant depending only on \(\alpha\). 
\end{lemma}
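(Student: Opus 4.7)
The plan is a dyadic telescoping argument, a discrete analog of the classical Zygmund-class reconstruction of first differences from second differences. The starting point is the algebraic identity, valid for any $h \in \Z$,
\begin{equation*}
    g(h) - g(0) = \tfrac{1}{2}\bigl(g(2h) - g(0)\bigr) - \tfrac{1}{2} D_h^2 g(0),
\end{equation*}
which follows immediately from $D_h^2 g(0) = g(2h) - 2g(h) + g(0)$. Iterating this identity along a geometric chain of step sizes $z, 2z, 4z, \ldots$ will let me exchange a single first-difference $g(z)-g(0)$ for a convergent sum of second-differences plus a remainder that can be killed using $\|g\|_\infty$.

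Concretely, fix $z \in E_n \setminus \{0\}$ and set $u_k := g(2^k z) - g(0)$. The identity above applied with $h = 2^k z$ gives $|u_k| \leq \tfrac{1}{2} |u_{k+1}| + \tfrac{1}{2}|D_{2^k z}^2 g(0)|$, and whenever $2^k z \in E_n \setminus \{0\}$ the hypothesis bounds the last term by $\tfrac{1}{2}\|g\|_* |2^k z / n|^\alpha$. I would then choose $K$ to be the largest integer with $2^{K-1} |z| \leq n$, so that $z, 2z, \ldots, 2^{K-1} z$ all lie in $E_n$ (legitimizing the iteration for $k = 0, 1, \ldots, K-1$) while $2^K |z| > n$, which yields $2^{-K} < |z|/n \leq |z/n|^\alpha$, where the last step uses $|z/n| \leq 1$ together with $\alpha < 1$.

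Iterating the one-step bound $K$ times and telescoping produces
\begin{equation*}
    |u_0| \leq 2^{-K} |u_K| + \frac{\|g\|_*}{2} |z/n|^\alpha \sum_{k=0}^{K-1} 2^{k(\alpha-1)} \leq |z/n|^\alpha \cdot 2\|g\|_\infty + |z/n|^\alpha \cdot \frac{C(\alpha)}{2} \|g\|_*,
\end{equation*}
where the geometric sum is controlled by $\sum_{k \geq 0} 2^{k(\alpha-1)} = (1 - 2^{\alpha-1})^{-1} =: C(\alpha)$, convergent precisely because $\alpha < 1$, and where I used $|u_K| \leq 2\|g\|_\infty$ together with the bound $2^{-K} \leq |z/n|^\alpha$ from the previous step. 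This yields the claimed inequality.

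The argument is essentially routine once set up; the only real subtlety is the choice of the terminal index $K$, which must simultaneously (i) keep every intermediate step size $2^k z$ inside $E_n$ so the hypothesis applies, and (ii) be large enough that the crude remainder $2^{-K}|u_K|$ is absorbed into the target rate $|z/n|^\alpha$. Both requirements fix $K$ essentially uniquely, and both of them, along with the geometric-series convergence, rely critically on $\alpha < 1$.
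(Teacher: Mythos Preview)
Your proof is correct and takes essentially the same dyadic telescoping approach as the paper. The only cosmetic difference is that you work directly with $u_k = g(2^k z) - g(0)$, whereas the paper first reduces to the case $g(0) = 0$ and then telescopes $|g(2^m z) - 2^m g(z)|$; the underlying identity, the choice of terminal index, and the use of $\alpha < 1$ for geometric convergence are identical.
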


\begin{proof}
The case \(z = 0\) is trivial, so let us fix \(z \in E_n\setminus\{0\}\). Let us first consider the case where \(g(0) = 0\). Consider that \(|D^2_h g(y)| = |g(y+2h) - 2g(y+h) + g(y)|\). By taking \(h = z\) (noting \(|h| \leq n\) since \(|z| \leq n\)) and \(y = 0\), we have by \(g(0) = 0\) that 
\begin{equation*}
 |g(2z) - 2g(z)| = |D_{z}^2g(0)| \leq |z/n|^\alpha ||g||_*. 
\end{equation*}
Let \(m\) be the smallest non-negative integer such that \(2^{-m} \leq |z/n|\). Note that we then have \(|z/n| \leq 2^{-(m-1)}\) and \(|2^kz| \leq n\) for all \(k \leq m-1\). Therefore, iterating and applying the above decomposition successfully by choosing \(h = 2^k z\) at each step (noting \(|h| \leq n\)), observe
\begin{align*}
 |g(2^mz) - 2^mg(z)| &\leq |g(2^mz) - 2g(2^{m-1}z)| + |2g(2^{m-1}z) - 2^2g(2^{m-2}z)| + ... + |2^{m-1}g(2z) - 2^mg(z)| \\
 &= \sum_{k=0}^{m-1} |2^{m-k-1}g(2^{k+1}z) - 2^{m-k}g(2^{k}z)| \\
 &\leq |z/n|^\alpha ||g||_* \sum_{k=0}^{m-1} 2^{m-k-1 + k\alpha} \\
 &\leq |z/n|^\alpha ||g||_* 2^m \sum_{k=0}^{\infty} 2^{-k(1-\alpha) - 1} \\
 &= |z/n|^\alpha ||g||_* 2^m \frac{C(\alpha)}{2}
\end{align*}
for some constant \(C(\alpha)\) depending only on \(\alpha\). Here, we have used \(\alpha < 1\) to conclude \(\sum_{k=0}^{\infty} 2^{-k(1-\alpha)}\) is finite. Therefore, by triangle inequality and \(2^{-m} \leq |z/n|\), we have 
\begin{align*}
 |g(z)| &\leq |z/n|^\alpha ||g||_{*} \frac{C(\alpha)}{2} + \frac{||g||_\infty}{2^m} \\
 &\leq |z/n|^\alpha ||g||_* \frac{C(\alpha)}{2} + |z/n| \cdot ||g||_\infty \\
 &\leq |z/n|^\alpha \left(||g||_* \frac{C(\alpha)}{2} + ||g||_\infty\right)
\end{align*}
where we have used \(|z| \leq n\) and \(\alpha < 1\) to obtain \(|z/n| \leq |z/n|^\alpha\). Since \(z\) was arbitrary, it follows the bound 
\begin{equation*}
  |g(z)| \leq |z/n|^\alpha \left(||g||_* \frac{C(\alpha)}{2} + ||g||_\infty\right)
\end{equation*}
holds for all \(z \in E_n\). 

Let us now consider the general case where \(g(0)\) need not equal zero. Define the function \(\tilde{g}(z) = g(z) - g(0)\). Clearly \(\tilde{g}(0) = 0\), \(||\tilde{g}||_* = ||g||_*\), and \(||\tilde{g}||_\infty \leq 2||g||_\infty\). Repeating the above argument for \(\tilde{g}\) gives \(|g(z) - g(0)| = |\tilde{g}(z)| \leq |z/n|^\alpha \left(||\tilde{g}||_* C(\alpha)/2 + ||\tilde{g}||_\infty \right)= |z/n|^\alpha \left(||g||_* C(\alpha)/2 + 2||g||_\infty\right)\) as desired. 
\end{proof}

\begin{proposition}\label{prop: convolution}
Suppose $\beta \in (0, 1/2)$ and $f_n,g_n:\Z\to\R$ such that $f_n (z)$ and $g_n(z)$ equal $0$ for $z\notin [0,n]\cap \Z$. Suppose, for any $h\in [-n,n] \cap \Z$, both $f_n$ and $g_n$ satisfy
\begin{equation}\label{eqn:convolution_prop_holder}
    |f_n(z+h) - f_n(z)| \leq M |h/n|^\beta
\end{equation}
for all \(z\in [max\{-h,0\}, n-max\{h,0\}] \cap \Z\). Then for all \(z \in [-n, n] \cap \Z\) we have 
\begin{equation*}
    |f_n\ast_D  g_n^-(z) - f_n\ast_D  g_n^-(0)|\leq  n|z/n|^{2\beta} C(\beta, M)
\end{equation*}
where $g_n^-(z) = g_n(-z)$ and $f_n\ast_D  g_n^-$ is the discrete convolution given by $f_n\ast_D  g_n^-(z) = \sum_{k\in \Z} f_n(k)g_n^-(z-k)$ and $C(\beta, M)$ is some constant  depending only on $\beta$ and $M$.
\end{proposition}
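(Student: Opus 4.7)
The plan is to reduce the claimed bound to the discrete Zygmund-type estimate of Lemma \ref{lemma:zygmund_holder_interval_discrete} applied to $F := f_n \ast_D g_n^-$ with exponent $\alpha = 2\beta$. Since $\beta < 1/2$ gives $2\beta < 1$, that lemma applies and yields $|F(z) - F(0)| \leq |z/n|^{2\beta}\bigl(||F||_* \,C(2\beta)/2 + 2||F||_\infty\bigr)$, where $||F||_*$ is the second-difference seminorm defined there. It thus suffices to prove that both $||F||_\infty$ and $||F||_*$ are at most $C(\beta, M)n$. The sup-norm bound is essentially free: the Hölder hypothesis forces $f_n$ and $g_n$ to be bounded by a constant depending on $M$ throughout $[0,n]$, and since both factors have support of size at most $n+1$, we get $||F||_\infty \leq (n+1)||f_n||_\infty ||g_n||_\infty \lesssim n$.

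The key ingredient for the seminorm bound is the identity
\begin{equation*}
    D_h^2 F(z) \;=\; \sum_{k \in \Z}\bigl[f_n(k+h) - f_n(k)\bigr]\bigl[g_n^-(z-k) - g_n^-(z-h-k)\bigr],
\end{equation*}
which I would establish by rewriting $F(z+h) - F(z) = \sum_k [f_n(k+h) - f_n(k)]\,g_n^-(z-k)$ after a shift of the summation index, doing the analogous shift to get $F(z) - F(z-h) = \sum_k [f_n(k+h) - f_n(k)]\,g_n^-(z-h-k)$, and subtracting. This is the discrete analogue of the standard convolution trick used by Giné and Nickl to promote a single $\beta$-Hölder modulus of smoothness into a $2\beta$-Hölder modulus on the convolution.

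Given the identity, the summands naturally split into two regimes. For indices $k$ in the ``interior'' region where $k, k+h \in [0, n]$ and $z-k, z-h-k \in [-n, 0]$, both bracketed differences are controlled by $M|h/n|^\beta$ via the Hölder hypothesis, contributing at most $(n+1)M^2|h/n|^{2\beta}$ after summation over the $O(n)$ interior indices. For the ``boundary'' indices $k$ where at least one value falls outside its support interval, there are only $O(|h|)$ such terms, and each is bounded crudely by $C M^2$ using the $L^\infty$ bounds, yielding a total boundary contribution of $O(M^2|h|)$. Since $|h| \leq n$ and $2\beta \leq 1$ give $|h| \leq |h|^{2\beta} n^{1-2\beta} = n|h/n|^{2\beta}$, this boundary term is absorbed into the main term, producing $||F||_* \leq C(\beta, M) n$, and Lemma \ref{lemma:zygmund_holder_interval_discrete} then closes the argument. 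The main obstacle is the careful boundary bookkeeping: one must confirm that the $O(|h|)$ boundary summands can be uniformly bounded without inadvertently picking up an extra $|h/n|^\beta$ or similar factor that would spoil the $2\beta$-Hölder rate, and that the use of $|h| \leq n$ rather than $|h| \ll n$ does not leak an unwanted power of $n$.
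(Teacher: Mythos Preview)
Your approach is essentially identical to the paper's: both reduce to Lemma~\ref{lemma:zygmund_holder_interval_discrete} with $\alpha = 2\beta$ via the identity $D_h^2(f_n \ast_D g_n^-) = (D_h f_n)\ast_D(D_h g_n^-)$ followed by an interior/boundary split giving $|D_h^2 F(z)| \lesssim nM^2|h/n|^{2\beta} + M^2|h| \lesssim nM^2|h/n|^{2\beta}$. The only cosmetic difference is that the paper decouples the two factors by Cauchy--Schwarz, bounding $|D_h^2 F(z)| \leq \|D_h f_n\|_2\|D_h g_n\|_2$ and then controlling each $\ell^2$-norm separately, which spares you from tracking the intersection of the two interior regions; one small caution is that your assertion that the H\"older hypothesis alone forces $\|f_n\|_\infty \lesssim M$ is false (take $f_n \equiv cM$ on $[0,n]$), though the paper tacitly makes the same boundedness assumption and the proposition is only ever applied to functions bounded by $O(M)$.
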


\begin{proof}
Define \(D_h\) to be the discrete finite difference operator as in Lemma \ref{lemma:zygmund_holder_interval_discrete}. Consider that,
\begin{align*}
    (D_h(f_n\ast_D  g_n^-))(z)
    &= \sum_{k\in \Z} f_n(k)g_n^-(z+h-k) - \sum_{k\in \Z} f_n(k)g_n^-(z-k)\\
    &= \sum_{k\in \Z} f_n(k)D_hg_n^-(z-k)\\
    &= f_n\ast_D  D_hg_n^-(z).
\end{align*}
By the same argument, we can show $D^2_h(f_n\ast_D  g_n^-) = (D_hf_n)\ast_D(D_hg_n^-)$. First assume $0\leq h\leq n$. Then by Cauchy-Schwarz we have
\begin{align*}
    |D_h^2(f_n\ast_D g_n^-)(z)| 
    &= \left| \sum_{k\in \Z} D_hf_n(k) D_hg_n^-(z-k) \right|\\
    &\leq \sqrt{\sum_{k\in \Z}|D_hf_n(k)|^2}\sqrt{\sum_{k\in \Z}|D_hg_n^-(z-k)|^2}\\
    &= \sqrt{\sum_{k\in \Z}|D_hf_n(k)|^2}\sqrt{\sum_{k\in \Z}|g_n(k-z) - g_n(k-z-h)|^2}\\
    &= \sqrt{\sum_{k\in \Z}|D_hf_n(k)|^2}\sqrt{\sum_{k\in \Z}|g_n(k+h) - g_n(k)|^2}\\
    &= \sqrt{\sum_{k\in \Z}|D_hf_n(k)|^2}\sqrt{\sum_{k\in \Z}|D_hg_n(k)|^2}
\end{align*}

\noindent Note that
\begin{equation*}
    D_hf_n(k)=
   \begin{cases}
     f_n(k+h) &\textit{if } -h\leq k < 0 \\
     f_n(k+h) - f_n(k) &\textit{if } 0\leq k\leq n-h \\
     -f_n(k) &\textit{if } n-h <  k \leq n \\
     0 &\textit{otherwise}.
    \end{cases}
\end{equation*}
A similar expression holds for $D_hg_n$. From (\ref{eqn:convolution_prop_holder}) we have the following bounds,
\begin{equation*}
|D_hf_n(k)|\vee|D_hg_n(k)| \leq
\begin{cases}
    M|h/n|^\beta & \textit{if } 0\leq k\leq n-h\\
    M          & \textit{if } k\in [-h, 0) \cup (n-h, n]\\
    0          &\textit{otherwise}.
\end{cases}
\end{equation*}
Therefore,
\begin{align*}
    |D^2_h(f_n\ast_D  g_n^{-})(z)|
    &\leq \sqrt{\sum_{k\in \Z}|D_hf_n(k)|^2}\sqrt{\sum_{k\in \Z}|D_hg_n(k)|^2}\\
    &\leq M^2|h/n|^{2\beta}(n-h+1)+2M^2|h|\\
    &\leq 2M^2n|h/n|^{2\beta}+2M^2n|h/n|\\
    &\leq 4M^2n |h/n|^{2\beta}
\end{align*}
\noindent where we have used that \(h\) was chosen to satisfy \(0 \leq  h \leq n\). In particular, we have used \(|h/n| \leq 1\) to obtain \(|h/n| + |h/n|^{2\beta}\leq 2|h/n|^{2\beta}\). Using the same argument, we also have, $|D^2_h(f_n\ast_D  g_n^{-})(z)|\leq 4M^2n |h/n|^{2\beta}$ when $-n\leq h < 0$. Since this holds for all \(z \in \Z\), and \(h \in [-n,n] \cap \Z\), and since \(2\beta < 1\), we have by Lemma \ref{lemma:zygmund_holder_interval_discrete} that for all \(z \in [-n, n] \cap \Z\)
\begin{equation*}
\left|(f_n\ast_D  g_n^{-})(z) - (f_n\ast_D  g_n^{ -})(0)\right| \leq n{|z/n|^{2\beta}}C(\beta, M). 
\end{equation*}
In the application of Lemma \ref{lemma:zygmund_holder_interval_discrete}, we have used
$$||f_n\ast_D  g_n^{-}||_\infty = \sup_{z} \left|\sum_{k\in \Z} f_n(k)g_n^{-}(z-k)\right| = 0 \vee \sup_{z} \left|\sum_{k = 0}^n f_n(k)g_n^{-}(z-k) \right| \leq nM^2$$
The proof is complete.
\end{proof}

\end{appendix}

\end{document}